\newtheorem{theorem}{Proposition}[section]
\newtheorem{T}[theorem]{Theorem}
\newtheorem{lemma}[theorem]{Lemma}
\newtheorem{cor}[theorem]{Corollary}
\theoremstyle{definition}
\newtheorem*{T*}{Theorem}
\newtheorem{defi}[theorem]{Definition}
\newtheorem{example}[theorem]{Example}
\newtheorem*{claim}{Claim}
\newtheorem{remark}[theorem]{Remark}
\newcommand{\N}{\mathbb{N}}
\newcommand{\Z}{\mathbb{Z}}
\newcommand{\R}{\mathbb{R}}
\DeclareMathOperator{\cl}{cl}
\DeclareMathOperator{\scl}{scl}
\DeclareMathOperator{\Aut}{Aut}
\DeclareMathOperator{\Out}{Out}
\DeclareMathOperator{\Inn}{Inn}
\DeclareMathOperator{\Acode}{\mbox{$A$}-code}
\DeclareMathOperator{\Bcode}{\mbox{$B$}-code}
\DeclareMathOperator{\Ccode}{\mbox{$C$}-code}
\DeclareMathOperator{\Zcode}{\mathbb{Z}-code}
\title{AUT-INVARIANT QUASIMORPHISMS ON GRAPH PRODUCTS OF ABELIAN GROUPS}
\date{%
}
\author{BASTIEN KARLHOFER}
\begin{document}
\maketitle

\begin{abstract}
The present paper constructs unbounded quasimorphisms that are invariant under all automorphisms on free products of more than two factors and on graph products of finitely generated abelian groups. This includes many classes of right angled Artin and right angled Coxeter groups. We discuss various geometrically arising families of graphs as examples and deduce the non-triviality of an invariant analogue of stable commutator length recently introduced by Kawasaki and Kimura for these groups. 
\end{abstract}

\section{Introduction}

The study of quasimorphisms on a given group $G$ forms a central topic in geometric group theory with quasimorphisms encoding rich information about the underlying structure of the group $G$. A classical example is the relationship of quasimorphisms with the bounded cohomology of $G$ via Bavard's duality theorem in \cite{Bavard}. For free groups $F_n$ the so called counting quasimorphisms originally introduced by Brooks in \cite{Brooks} provide a wide variety of examples. In \cite{Fuji} Calegari and Fujiwara developed his ideas further by constructing quasimorphisms on non-elementary hyperbolic groups. Many important constructions of quasimorphism for diffeomorphism groups of surfaces are carried out in \cite{Gamb}. The geometry of quasimorphisms and central extensions is analysed in  \cite{Barge}. Moreover, numerous applications of quasimorphisms in symplectic geometry originate from work of Entov and Polterovich \cite{Entov}.

In this paper we construct unbounded Aut-invariant quasimorphisms on graph products of finitely generated abelian groups answering questions on their existence stated by Marcinkowski in \cite{Marcinkowski} in almost all cases. A graph product of groups $G$ is defined by taking the free product over a set of groups indexed by the vertices of an underlying graph $\Gamma$ and introducing commutator relations for any free factors for which the according vertices are connected by an edge in $\Gamma$. Graph products of groups first rose to prominence by work of Green in her PhD thesis \cite{Green} and generalise the notions of free products and cartesian products at the same time. Right angled Artin and right angled Coxeter groups are examples of graph products where all vertex groups are either infinite cyclic or of order two respectively.   

To construct Aut-invariant quasimorphisms we first extend a result of \cite{ich} on free products of two factors, which is due to \cite{BraMarc} for the free group of rank two, to free products of more than two factors. 

\begin{T} \label{T3}
Let $G=G_1 \ast \dots \ast G_k$ be a free product of freely indecomposable groups $G_i$ where $k \geq 2$. Assume that at most two factors are infinite cyclic and there exists $j \in \{1, \dots , k \}$ such that $G_j \ncong \Z/2$. Then $G$ admits infinitely many linearly independent homogeneous Aut-invariant quasimorphisms, all of which vanish on single letters. 
\end{T}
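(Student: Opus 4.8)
The plan is to extend the construction of \cite{ich}, itself building on \cite{BraMarc}, from two free factors to arbitrary $k \ge 2$. As there, the quasimorphisms will be Brooks-type counting quasimorphisms that depend only on a coarse combinatorial invariant --- a \emph{code} --- of the reduced normal form of an element, designed so that it is preserved, up to a bounded error, by the action of $\Aut(G)$. Because a homogeneous quasimorphism is automatically invariant under inner automorphisms, it suffices to produce infinitely many linearly independent homogeneous quasimorphisms invariant under a generating set of $\Out(G)$. For a free product of freely indecomposable groups with at most two infinite cyclic factors, the Fouxe-Rabinovitch description gives such a generating set: the automorphisms of the individual factors $G_i$, the permutations of isomorphic factors, the partial conjugations, and the Nielsen transvections attached to the (at most two) infinite cyclic factors. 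The first step is to record explicitly the effect of each of these generators on a reduced word $g = y_1 y_2 \cdots y_n$, where $y_\ell$ lies in a factor $G_{c(\ell)}$ and $c(\ell) \ne c(\ell + 1)$.

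The second step is to define, for each reduced word, a code over a fixed finite alphabet: to each syllable one assigns a symbol recording the isomorphism class of its factor together with a mild automorphism-invariant of the syllable, with the factor $G_j \ncong \Z/2$ and the at most two infinite cyclic factors playing distinguished roles --- these are the $A$-, $B$-, $C$- and $\Z$-codes appearing below. By construction, factor automorphisms and permutations of isomorphic factors do not change the code at all; the crux is to show that partial conjugations and the transvections of the cyclic factors change the code of $g$ only inside a bounded-length region near the two ends of the word. The key local fact enabling this is that an interior occurrence of a syllable from the rigid factor $G_j$ is carried by such an automorphism to the same syllable, still separated from its neighbours by non-trivial blocks, so that the combinatorics of the code away from the ends is unaffected. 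Granting this, one fixes a self-non-overlapping pattern $P$ of length at least two over the code alphabet and defines $\phi_P(g)$ as the homogenisation of the signed count of disjoint copies of $P$ versus $P^{-1}$ in the code of a cyclically reduced representative of $g$. The usual Brooks overlap estimate, transported to the level of codes, shows that $\phi_P$ is a homogeneous quasimorphism; the bounded-error statement shows that $\phi_P$ is $\Out(G)$-, hence $\Aut(G)$-, invariant; and $\phi_P$ vanishes on single letters, whose code has length one and therefore contains no copy of $P$.

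For the last step, obtaining infinitely many linearly independent such quasimorphisms, one chooses an infinite family of patterns $P_1, P_2, \ldots$ of strictly increasing length --- the hypotheses on $G$ ensure the code alphabet has at least two symbols, so such patterns exist --- and, for each $m$, exhibits an explicit $w_m \in G$ whose code contains many disjoint copies of $P_m$ and no copy of $P_{m'}$ for $m' \ne m$; the matrix $\bigl(\phi_{P_{m'}}(w_m)\bigr)_{m,m'}$ is then triangular with non-zero diagonal entries, which forces linear independence (and, since each diagonal entry is non-zero, unboundedness of the corresponding quasimorphism). I expect the genuine difficulty to be the equivariance analysis of the second step. Partial conjugations can be composed so as to conjugate different factors by different, arbitrarily long elements, and the transvections along the infinite cyclic factors can spawn new syllables; showing that in spite of this the signed code-pattern count changes by at most a constant independent of the element requires a careful normal-form argument. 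This is precisely the place where the two hypotheses enter: the factor $G_j \ncong \Z/2$ supplies the rigid anchor on which the control of partial conjugations rests, while allowing at most two infinite cyclic factors keeps their transvection group small enough to be absorbed into the defect. Dropping either hypothesis --- three or more infinite cyclic factors, or all factors isomorphic to $\Z/2$ --- falls outside the reach of this argument, which is the source of the ``almost all cases'' alluded to in the introduction.
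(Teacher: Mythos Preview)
Your plan is a genuinely different route from the paper's, and it contains a real gap.

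The paper does \emph{not} redo the code-quasimorphism construction in the $k$-factor setting. Instead it reduces to the two-factor theorem (Theorem~\ref{T1}) as a black box. After reordering so that $G_1 \ncong \Z/2$ and $G_i \ncong \Z$ for $i \ge 3$, one observes that the kernel of the projection $p \colon G \to G_1 \ast G_2$ is $\Aut^0(G)$-invariant: it is normal, hence invariant under partial conjugations; it is generated by letters of $G_3,\dots,G_k$, hence invariant under factor automorphisms; and since no $G_i$ with $i \ge 3$ is infinite cyclic there are no transvections moving those letters. Pulling back an $\Aut(G_1 \ast G_2)$-invariant quasimorphism $\psi$ along $p$ therefore gives an $\Aut^0(G)$-invariant quasimorphism on $G$. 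One then averages over the finitely many swap automorphisms (a system of coset representatives for $\Aut^0(G)$ in $\Aut(G)$) to obtain full $\Aut(G)$-invariance, and checks unboundedness by restricting back to $G_1 \ast G_2 \le G$, where the averaged quasimorphism is a positive integer multiple of $\psi$ because $\psi$ vanishes on letters. The hypotheses enter only here: ``at most two $\Z$'s'' makes $\ker p$ transvection-invariant, and ``some $G_j \ncong \Z/2$'' ensures $G_1 \ast G_2 \not\cong D_\infty$ so that Theorem~\ref{T1} applies.

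The gap in your sketch is the claim that partial conjugations ``change the code of $g$ only inside a bounded-length region near the two ends of the word''. This is what makes the two-factor case work, but it relies on the fact that for $k=2$ every partial conjugation is, modulo a factor automorphism, an inner automorphism and hence absorbed by homogenisation. For $k \ge 3$ this fails. Take $G = G_1 \ast G_2 \ast G_3$ and the partial conjugation $\sigma$ fixing $G_1, G_2$ and sending $c \mapsto aca^{-1}$ for some $a \in G_1$, $c \in G_3$; this is not inner even up to factor automorphisms. For $w = (bc)^n$ with $b \in G_2$ one has $\sigma(w) = (baca^{-1})^n$, so the $G_1$-tuple of $w$ is empty while that of $\sigma(w)$ has length $2n$. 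More generally, a partial conjugation of $G_\ell$ by $v \in G_j$ replaces every interior $G_\ell$-syllable $g_\ell$ by $v g_\ell v^{-1}$, inserting new $G_j$-letters and altering the $G_j$-code throughout the word, not just at the ends. Your ``rigid anchor'' heuristic does not prevent this, and the bounded-defect estimate you need does not follow. The paper's projection-and-average argument sidesteps this entirely by never having to analyse the action of $\Aut^0(G)$ on codes.
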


We proceed by introducing an equivalence relation $\sim_\tau$ on the vertex set of the underlying graph of a graph product. The notion of lower cones from \cite{Marcinkowski} and an explicit description of the automorphism group of graph products of finitely generated abelian groups given in \cite{Gutierrez} enables us to construct unbounded quasimorphisms that are invariant under a finite index subgroup $\Aut^0(G) \leq \Aut(G)$, where $G$ is a graph product of finitely generated abelian groups. An averaging procedure will then produce $\Aut(G)$-invariant quasimorphisms that are still unbounded. For the case of right angled Artin groups we prove the existence of unbounded Aut-invariant quasimorphisms for many classes of graphs in Proposition \ref{RAAG1} from which we obtain the following theorem as a special case.

\begin{T}\label{RAAG2}
Let $\Gamma=(V,E)$ be a finite graph with $|V| \geq 2$ and such that no two distinct vertices $x,y \in V$ satisfy $lk(v) \subset st(v)$. Then the right angled Artin group $R_\Gamma$ admits infinitely many linearly independent homogeneous Aut-invariant quasimorphisms. 
\end{T}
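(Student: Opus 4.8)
The plan is to exhibit Theorem~\ref{RAAG2} as the special case of Proposition~\ref{RAAG1} in which the hypothesis of that proposition is forced by the link--star condition; the substance of the argument is therefore a translation together with a handful of elementary graph-theoretic observations.

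First I would rewrite the hypothesis group-theoretically. By the description of $\Aut(R_\Gamma)$ obtained by specialising \cite{Gutierrez} to the vertex groups $\Z$ --- classically, Servatius' generating set --- the transvection-type generators, those sending a single standard generator $x$ to $xy$ while fixing all other generators, are in bijection with the ordered pairs of distinct vertices $x \ne y$ satisfying $lk(x) \subseteq st(y)$. Thus the hypothesis that no two distinct vertices $x,y$ satisfy $lk(x) \subseteq st(y)$ says precisely that $\Aut(R_\Gamma)$ contains no transvections. In the language of the present paper this means that the relation $\sim_\tau$ is trivial, each equivalence class being a single vertex; that $\Aut^0(R_\Gamma)$ is generated by inversions and partial conjugations; and that $\Aut(R_\Gamma)/\Aut^0(R_\Gamma)$ is a finite group of symmetries of $\Gamma$, possibly combined with inversions. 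I expect this to be exactly the regime in which the combinatorial hypothesis of Proposition~\ref{RAAG1} holds automatically, so that the proposition applies.

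Next I would record the consequences that make the construction run. (i)~$\Gamma$ has no isolated vertex, since an isolated vertex $x$ satisfies $lk(x)=\emptyset\subseteq st(y)$ for every $y$; hence every connected component of $\Gamma$ has at least two vertices. (ii)~$\Gamma$ has no vertex $y$ adjacent to all the others, since such a $y$ satisfies $lk(x)\subseteq V=st(y)$ for every $x\ne y$; in particular $\Gamma$ is not complete and $R_\Gamma$ is not a nontrivial direct product. (iii)~If $\Gamma=\Gamma_1\sqcup\dots\sqcup\Gamma_r$ is disconnected then $r\ge 2$, and by~(i) each $R_{\Gamma_i}$ is a non-cyclic freely indecomposable group, so $R_\Gamma=R_{\Gamma_1}\ast\dots\ast R_{\Gamma_r}$ already satisfies the hypotheses of Theorem~\ref{T3} (no factor is infinite cyclic, and $R_{\Gamma_1}\not\cong\Z/2$), which finishes this case. (iv)~If $\Gamma$ is connected, one passes instead to a lower-cone quotient of $R_\Gamma$ in the sense of \cite{Marcinkowski}: this quotient is a free product of the shape required by Theorem~\ref{T3}, and since $\Aut(R_\Gamma)$ has no transvections its defining kernel is preserved by every inversion and every partial conjugation, hence is $\Aut^0(R_\Gamma)$-invariant; pulling back the quasimorphisms of Theorem~\ref{T3} along the quotient map then yields infinitely many linearly independent unbounded homogeneous $\Aut^0(R_\Gamma)$-invariant quasimorphisms on $R_\Gamma$. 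In both cases a final averaging over the finite group $\Aut(R_\Gamma)/\Aut^0(R_\Gamma)$ turns these into $\Aut(R_\Gamma)$-invariant quasimorphisms; here one uses that the counting quasimorphisms supplied by Theorem~\ref{T3} can be chosen in an infinite family on which the finitely many graph symmetries act with infinite orbits, so that summing over the orbits leaves infinitely many linearly independent invariant classes and does not destroy unboundedness.

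The step I expect to be the main obstacle is~(iv): making the connected case canonical. One must read off the lower-cone quotient from the graph alone, so that it is respected by every partial conjugation and every inversion, while keeping it large enough that the quotient group has at most two infinite cyclic factors and at least one factor not isomorphic to $\Z/2$ --- and it is precisely the absence of dominated vertices that lets both of these requirements be satisfied at once. By comparison the translation in the first step and the averaging in the last are routine.
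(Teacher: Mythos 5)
Your core reduction is exactly the paper's proof: the hypothesis says no dominated transvection $\tau_{x,y}$ exists, hence every equivalence class of $\sim_\tau$ is a single vertex, so the second alternative of Proposition \ref{RAAG1} holds; and since $\Gamma$ is not complete (a vertex with $st(y)=V$ would be dominated by every other vertex), $R_\Gamma$ is not free abelian, so Proposition \ref{RAAG1} gives the conclusion. The paper's proof is precisely this and nothing more.

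Your items (iii)--(iv), which set out to re-derive the relevant case of Proposition \ref{RAAG1}, are therefore not needed, and the one place where they commit to a mechanism is flawed: the labelled graph automorphisms form a finite set of coset representatives, so they cannot act on a family of quasimorphisms ``with infinite orbits'', and orbit-summing is in any case not how unboundedness of the average is protected. The mechanism used in the paper (in the proofs of Theorem \ref{T3} and Proposition \ref{RAAG1}) is different: one fixes a single Aut-invariant $\psi$ on the free-product quotient which vanishes on single letters, pulls it back along the $\Aut^0$-equivariant retraction, averages over the labelled graph automorphisms via Lemma \ref{finindexinvariant}, and then uses Corollary \ref{equivclassfull} to see that each representative either descends to an automorphism of the free product (contributing $\psi$ itself, by its Aut-invariance there) or sends the relevant subgroup to single letters (contributing $0$); hence the average restricts to $|J|\,\psi$ with $|J|\geq 1$ on that subgroup, which gives unboundedness and preserves linear independence. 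Finally, the ``main obstacle'' you flag in (iv) is vacuous under the hypothesis: since there are no dominations at all, $\leq_\tau$ has no nontrivial relations, so \emph{every} subset of $V$ is a lower cone, and any two non-adjacent vertices already give an $\Aut^0$-equivariant retraction onto $F_2$.
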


Our construction yields unbounded Aut-invariant quasimorphisms for a very large class of graph products of finite abelian groups in Theorem \ref{T5}. From this we deduce the following theorem as a special case of Corollary \ref{lkcorollary}. 

\begin{T}\label{introcorollary}
Let $\Gamma=(V,E)$ be a finite graph in which no two vertices have the same link. Then either $\Gamma$ is a complete graph in which case all graph products of finite groups on $\Gamma$ are finite or any graph product of finite abelian groups on $\Gamma$ admits infinitely many linearly independent homogeneous Aut-invariant quasimorphisms. 
\end{T}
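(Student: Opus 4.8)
The plan is to derive the statement from Theorem~\ref{T5} (via Corollary~\ref{lkcorollary}) together with an elementary analysis of the complete case, so the proof divides according to whether $\Gamma$ is complete. If $\Gamma$ is complete, then by the definition of a graph product every pair of vertex groups commutes, so the graph product on $\Gamma$ is the direct product $\prod_{v\in V} G_v$ of its vertex groups. A direct product of finitely many finite groups is finite, which establishes the first alternative; since a finite group carries no unbounded quasimorphism, there is nothing further to prove here, and this is exactly why the second alternative must exclude complete graphs.

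Assume now that $\Gamma$ is not complete and fix a graph product $G$ of finite abelian groups on $\Gamma$. The plan is to check that the hypothesis ``no two distinct vertices of $\Gamma$ have the same link'' is precisely what makes the construction underlying Theorem~\ref{T5} applicable. The first step is to show that this hypothesis forces the equivalence relation $\sim_\tau$ on $V$ to be trivial, i.e.\ every class is a singleton. Here one uses Gutierrez's explicit generating set for $\Aut(G)$: since all vertex groups are finite, no ``domination'' transvection is available, so a transvection-type automorphism can only permute vertices whose links coincide, while graph symmetries, vertex-group automorphisms, inversions and partial conjugations do not merge $\sim_\tau$-classes; hence the hypothesis on links makes $\sim_\tau$ trivial and, in particular, $\Aut^0(G)$ of finite index in $\Aut(G)$. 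The second step is to feed this into the lower-cone construction of Marcinkowski as packaged in Theorem~\ref{T5}: since $\Gamma$ is not complete there exist non-adjacent vertices, so the poset of lower cones is non-degenerate, and one obtains an unbounded homogeneous quasimorphism invariant under the finite-index subgroup $\Aut^0(G)$. Averaging this quasimorphism over the finite group $\Aut(G)/\Aut^0(G)$ produces an unbounded homogeneous $\Aut(G)$-invariant quasimorphism; carrying this out for infinitely many independent choices of cone data (or via the rescaling/diagonal argument used in the proof of Theorem~\ref{T5}) yields infinitely many linearly independent such quasimorphisms.

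The main obstacle is the reduction step: one must verify that the link hypothesis really does force $\sim_\tau$ to be trivial for \emph{every} graph product of finite abelian groups on $\Gamma$, ruling out that some composite of partial conjugations, inversions, factor automorphisms and graph symmetries identifies two distinct vertices in the sense of $\sim_\tau$ --- this is exactly where Gutierrez's description of $\Aut(G)$ is indispensable --- and then that, with $\sim_\tau$ trivial and $\Gamma$ non-complete, the non-degeneracy hypotheses of the lower-cone construction in Theorem~\ref{T5} are genuinely met. A secondary point requiring care is that averaging over $\Aut(G)/\Aut^0(G)$ preserves both unboundedness and linear independence of the resulting family; this follows because the averaging operator is a finite linear combination of precompositions by automorphisms, and such operations preserve the relevant infinite-dimensional quotient space of homogeneous quasimorphisms.
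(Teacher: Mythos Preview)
Your overall plan—reduce to Theorem~\ref{T5} through Corollary~\ref{lkcorollary}—is exactly the paper's route, and your treatment of the complete case is fine. The problem lies in your reduction step for the non-complete case.

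You claim that the link hypothesis forces every $\sim_\tau$-class to be a singleton, arguing that for finite vertex groups ``no domination transvection is available'' and that transvections only relate vertices whose links coincide. Both assertions are incorrect. Dominated transvections of type~(b) exist precisely for finite primary vertex groups: $\tau_{v,w}$ is defined whenever $|G_v|=p^k$, $|G_w|=p^\ell$ for the same prime $p$ and $st(v)\subset st(w)$. Hence for finite vertex groups $v\sim_\tau w$ means $st(v)=st(w)$ together with a common prime, not $lk(v)=lk(w)$. A concrete counterexample: take vertices $v_1,v_2,v_3,v_4$ with edges forming a triangle on $v_1,v_2,v_3$ plus the edge $v_3v_4$. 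All four links are pairwise distinct, the graph is not complete, yet $st(v_1)=st(v_2)=\{v_1,v_2,v_3\}$; with $G_{v_1}=G_{v_2}=\Z/3$ one gets $v_1\sim_\tau v_2$.

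More importantly, triviality of $\sim_\tau$ is not what Theorem~\ref{T5} asks for. Its hypothesis is that $W_\Gamma$ does not decompose as a cartesian product of direct truncated subgroups each isomorphic to some $Z_k=(\Z/2)^{\ast k}$ or finite abelian. The paper's deduction (Corollary~\ref{lkcorollary}) checks this directly: if some $Z_k$ with $k\geq 2$ were a direct truncated subgroup, its $k$ generating vertices would be pairwise non-adjacent and each adjacent to every vertex outside, hence would all share the same link—contradicting the hypothesis. Combined with non-completeness (which rules out the purely finite-abelian case), this verifies the hypothesis of Theorem~\ref{T5} in one line. No analysis of $\sim_\tau$, and no separate averaging or linear-independence argument, is needed at this stage: all of that machinery is already packaged inside the proof of Theorem~\ref{T5}.
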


We denote the Aut-invariant stable commutator length which was recently introduced by Kawasaki and Kimura in \cite{KK} by $\scl_{\Aut}$. In Section 8, as an application of our construction, we prove the non-triviality of this invariant for many graph products among which are the ones considered in Theorem \ref{RAAG2} and Theorem \ref{introcorollary} above.

\section{Preliminaries}

\begin{defi}
Let $G$ be a group. Denote by $\Aut(G)$ the group of all automorphisms of $G$. Furthermore, denote the normal subgroup of inner automorphisms by $\Inn(G)$ and the group of outer automorphisms by $\Out(G)=\Aut(G)/\Inn(G)$.
\end{defi}

\begin{defi}
Let $G$ be a group. A map $\psi \colon G \rightarrow \R$ is called a \textit{quasimorphism} if there exists a constant $D \geq 0$ such that 
\begin{align*}
|\psi(g)+\psi(h)-\psi(gh)| \leq D \hspace{1mm} \text{for all} \hspace{1mm} g,h \in G.
\end{align*}
The \textit{defect} of $\psi$ is defined to be the smallest number $D(\psi)$ with the above property.
A quasimorphism is \textit{homogeneous} if it satisfies $\psi(g^n)=n \psi(g)$ for all $g \in G$ and all $n \in \Z$. Further, $\psi$ is called Aut-invariant if $\psi(\varphi(g))=\psi(g)$ for all $g \in G$, $\varphi \in \Aut(G)$.
\end{defi}

It is an elementary calculation to verify that homogeneous quasimorphisms are constant on conjugacy classes and vanish on elements of finite order.

\begin{defi}
Let $\psi \colon G \to \R$ be a quasimorphism. The \textit{homogenisation} $\bar{\psi} \colon G \to \R$ of $\psi$ is defined to be $\bar{\psi}(g)= \lim_{n \in \N} \frac{\psi(g^n)}{n}$ for all $g \in G$.
\end{defi}

\begin{lemma}[{\cite[p.18]{Calegari}}] \label{HomogenisationQM}
The homogenisation $\bar{\psi}$ of a quasimorphism $\psi \colon G \to \R$ is a homogeneous quasimorphism. Moreover, it satisfies $|\bar{\psi}(g)- \psi(g)| \leq D(\psi)$ for any $g \in G$. 
\end{lemma}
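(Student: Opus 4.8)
The plan is to run the classical four-step argument. \textbf{Step 1 (a telescoping estimate).} First I would prove by induction on $n \geq 1$ that $|\psi(g^n) - n\psi(g)| \leq (n-1)D(\psi)$ for every $g \in G$: the case $n=1$ is trivial, and the inductive step is the quasimorphism inequality applied to the pair $(g^{n-1}, g)$. Dividing by $n$ yields $|\tfrac{1}{n}\psi(g^n) - \psi(g)| \leq \tfrac{n-1}{n}D(\psi) \leq D(\psi)$, which simultaneously shows that the sequence $\bigl(\tfrac{1}{n}\psi(g^n)\bigr)_n$ is bounded and that, \emph{once the limit is known to exist}, it satisfies $|\bar\psi(g) - \psi(g)| \leq D(\psi)$, i.e.\ the ``moreover'' clause.

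\textbf{Step 2 (existence of the limit).} To see that $\bar\psi(g)$ is well defined I would invoke Fekete's subadditive lemma. Setting $b_n \defeq \psi(g^n) + D(\psi)$, the defect inequality applied to $(g^m, g^n)$ gives $\psi(g^{m+n}) \leq \psi(g^m) + \psi(g^n) + D(\psi)$, hence $b_{m+n} \leq b_m + b_n$. Therefore $\tfrac{1}{n}b_n$ converges to $\inf_n \tfrac{1}{n}b_n$, which is finite since the sequence is bounded below by Step 1. As $\tfrac{1}{n}b_n$ and $\tfrac{1}{n}\psi(g^n)$ differ by $D(\psi)/n \to 0$, the limit $\bar\psi(g) = \lim_n \tfrac{1}{n}\psi(g^n)$ exists and is finite.

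\textbf{Step 3 (homogeneity).} For $k \geq 1$ one computes $\bar\psi(g^k) = \lim_n \tfrac{1}{n}\psi(g^{kn}) = k\lim_n \tfrac{1}{kn}\psi(g^{kn}) = k\bar\psi(g)$, using that passing to the subsequence indexed by multiples of $k$ does not change the limit. The quasimorphism inequality for $(e,e)$ gives $|\psi(e)| \leq D(\psi)$, so $\bar\psi(e) = 0$. Applying the defect inequality to $(g^n, g^{-n})$ together with $|\psi(e)| \leq D(\psi)$ yields $|\psi(g^n) + \psi(g^{-n})| \leq 2D(\psi)$; dividing by $n$ and letting $n \to \infty$ gives $\bar\psi(g^{-1}) = -\bar\psi(g)$, which with the positive case covers all $k \in \Z$.

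\textbf{Step 4 ($\bar\psi$ is a quasimorphism).} Here I would simply combine the pointwise estimate from Step 1 with the defect inequality for $\psi$: for all $g,h \in G$,
\begin{align*}
|\bar\psi(gh) - \bar\psi(g) - \bar\psi(h)|
&\leq |\bar\psi(gh) - \psi(gh)| + |\psi(gh) - \psi(g) - \psi(h)| \\
&\quad {}+ |\psi(g) - \bar\psi(g)| + |\psi(h) - \bar\psi(h)| \\
&\leq 4D(\psi),
\end{align*}
so $\bar\psi$ has defect at most $4D(\psi)$, in particular is a quasimorphism. No step here is a genuine obstacle; the only point that is not a one-line manipulation is the convergence in Step 2, and even that is a direct application of Fekete's lemma once the right subadditive sequence is identified. (Sharper bookkeeping in Step 4 improves the defect bound to $2D(\psi)$, but since the statement only asks that $\bar\psi$ be a quasimorphism, the crude estimate suffices.)
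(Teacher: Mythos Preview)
Your proof is correct and complete. The paper itself does not give a proof of this lemma but merely cites \cite[p.18]{Calegari}; the argument you wrote is precisely the standard one found in that reference (Fekete's lemma for existence, the telescoping estimate for the pointwise bound, and the crude $4D(\psi)$ defect estimate via the triangle inequality).
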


\begin{example}\label{Dinfty}
Let $D_\infty$ be the infinite dihedral group. Then $(D_\infty)^k$ does not admit any unbounded quasimorphism for $k \geq 1$. By Lemma \ref{HomogenisationQM} any quasimorphism is only a bounded distance away from its homogenisation. In $D_\infty$ the only elements of infinite order are conjugate to their inverses, but homogeneous quasimorphisms are constant on conjugacy classes. Thus a homogeneous quasimorphism on $D_\infty$ vanishes on all elements of infinite order and therefore on all of $D_\infty$. The same follows for $(D_\infty)^k$ for any $k \geq 1$. 
\end{example}

\begin{example}
Let $G$ be an abelian group. Then $G$ does not admit an unbounded Aut-invariant quasimorphism. In fact, the inversion $ \iota \colon G \to G$ defined by $\iota(g)=g^{-1}$ is an automorphism since $G$ is abelian. Let $\psi \colon G \to \R$ be an Aut-invariant quasimorphism. By definition $\bar{\psi}$ is Aut-invariant as well and by Lemma \ref{HomogenisationQM} $\bar{\psi}$ is only a finite distance away from $\psi$. However, $\bar{\psi}(g) = \bar{\psi}(\iota(g)) = \bar{\psi}(g^{-1}) = - \bar{\psi}(g)$ for all $g \in G$. So $\bar{\psi}$ vanishes and $\psi$ was bounded to begin with. 
\end{example}

\begin{defi}
Let $I$ be an indexing set and let $G= \ast_{i \in I} G_i$ be a free product of a family of groups $\{G_i\}_{i \in I}$. Via the canonical inclusion the factor $G_i$ is a subgroup of $G$ for each $i \in I$. An element of $G$ belonging to one of the factors is called a \textit{letter} of $G$. A \textit{word} in $G$ is a product of letters in $G$. For any two letters belonging to the same factor in $G$ their product in $G$ can be replaced by the letter that represents their product in that factor. Moreover, any letters that are the identity in the factor they belong to can be omitted inside any word without changing the element that word represents in $G$. A word is called \textit{reduced} if no two consecutive letters belong to the same factor and no letters appear that represent the identity. Recall that every element $g \in G$ has a unique presentation as a reduced word.
\end{defi}

\begin{defi}
A group $G$ is called \textit{freely indecomposable} if $G$ is non-trivial and not isomorphic to a free product of the form $G_1 \ast G_2$ where $G_1$, $G_2$ are non-trivial groups.
\end{defi}
 
For example, every finite group is freely indecomposable since any free product of non-trivial groups contains elements of infinite order.
Every group with non-trivial center is freely indecomposable since any free product of non-trivial groups has trivial center.

\subsection{New quasimorphisms from old ones}

\begin{lemma} \label{Gen6}
Let $G$ be a group and let $H \leq G$ be a characteristic subgroup with associated quotient map $p \colon G \rightarrow G/H$. Then for any unbounded Aut-invariant quasimorphism $\psi \colon G/H \to \R$ the composition $\psi \circ p \colon G \to \R$ is an unbounded Aut-invariant quasimorphism on $G$. Furthermore, linearly independent quasimorphisms on $G/H$ give rise to linearly independent quasimorphisms on $G$. 
\end{lemma}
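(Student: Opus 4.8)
The plan is to verify the three assertions of Lemma \ref{Gen6} in turn: that $\psi \circ p$ is a quasimorphism, that it is Aut-invariant, and that it is unbounded, and finally to check the linear independence statement.

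First I would establish the quasimorphism property. Since $p$ is a group homomorphism, for any $g, h \in G$ we have $p(gh) = p(g)p(h)$, so
\begin{align*}
|\psi(p(g)) + \psi(p(h)) - \psi(p(gh))| = |\psi(p(g)) + \psi(p(h)) - \psi(p(g)p(h))| \leq D(\psi),
\end{align*}
which shows $\psi \circ p$ is a quasimorphism with $D(\psi \circ p) \leq D(\psi)$. This step is routine.

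Next I would address Aut-invariance, which I expect to be the main point of the argument and the place where the hypothesis that $H$ is characteristic is used. Given $\varphi \in \Aut(G)$, because $H$ is characteristic we have $\varphi(H) = H$, so $\varphi$ descends to a well-defined automorphism $\bar{\varphi} \in \Aut(G/H)$ satisfying $p \circ \varphi = \bar{\varphi} \circ p$. Then for any $g \in G$,
\begin{align*}
(\psi \circ p)(\varphi(g)) = \psi(p(\varphi(g))) = \psi(\bar{\varphi}(p(g))) = \psi(p(g)) = (\psi \circ p)(g),
\end{align*}
using that $\psi$ is Aut-invariant on $G/H$ in the penultimate equality. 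Hence $\psi \circ p$ is Aut-invariant.

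For unboundedness, since $\psi$ is unbounded on $G/H$ and $p$ is surjective, the image of $\psi \circ p$ equals the image of $\psi$, which is unbounded; so $\psi \circ p$ is unbounded. Finally, for linear independence: suppose $\psi_1, \dots, \psi_n \colon G/H \to \R$ are linearly independent and $\sum_{i=1}^n c_i (\psi_i \circ p) = 0$ on $G$. For any $y \in G/H$ pick $g \in G$ with $p(g) = y$ by surjectivity; then $\sum_i c_i \psi_i(y) = \sum_i c_i \psi_i(p(g)) = 0$, so $\sum_i c_i \psi_i = 0$ on $G/H$, forcing all $c_i = 0$. No genuine obstacle arises here; the only subtlety worth stating explicitly is that "characteristic" is exactly what makes $\bar{\varphi}$ well-defined, and that surjectivity of $p$ is what transports both unboundedness and linear independence upward.
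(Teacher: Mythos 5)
Your proposal is correct and follows exactly the same route as the paper's (much terser) proof: the quasimorphism property comes from $p$ being a homomorphism, Aut-invariance from the descent of automorphisms afforded by $H$ being characteristic, and unboundedness and linear independence from the surjectivity of $p$. Your write-up simply makes explicit the details the paper leaves to the reader.
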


\begin{proof}
Since $p$ is a homomorphism, $\psi \circ p$ is a quasimorphism. The Aut-invariance of $\psi \circ p$ on $G$ follows from the Aut-invariance of $\psi$ on $G/H$ together with the fact that $H$ is characteristic. Finally, the statement about linear independence follows from the surjectivity of $p$. 
\end{proof}

\begin{lemma}\label{I=J}
Let $H \leq G$ be a subgroup of finite index $k \in \N$ and $I=\{g_1, \dots, g_k \}$ be a set of right coset representatives. So $G = \bigcup_{i=1}^n Hg_i$ where the union is disjoint. Let $g \in G$ be arbitrary. For each $i \in \{1, \dots, k\}$ we can write $g_i g =h_{i} g_{j_i}$ uniquely where $j_i \in \{1, \dots , k \}$ and $h_{i} \in H$. Define \begin{align*}
J_g=\{g_{j_i} \in I \ |\ \exists i \in \{1, \dots, k\} \text{ such that } g_i g =h_{i} g_{j_i} \text{ where } h_{i} \in H  \}.
\end{align*}
Then $J_g=I$. 
\end{lemma}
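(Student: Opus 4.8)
The plan is to show that the assignment $i \mapsto j_i$ defines a permutation of the index set $\{1, \dots, k\}$; once this is established, $J_g = \{g_{j_i} : i\} = \{g_1, \dots, g_k\} = I$ is immediate, since the $g_i$ are pairwise distinct coset representatives.

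First I would reformulate the defining relation $g_i g = h_i g_{j_i}$ in terms of cosets: since $h_i \in H$, this says precisely that $H g_i g = H g_{j_i}$. In other words, right multiplication by $g$ sends the coset $H g_i$ to the coset $H g_{j_i}$. Because $I$ is a \emph{complete} set of right coset representatives and the union $\bigcup_{i} H g_i$ is disjoint, the cosets $H g_1, \dots, H g_k$ are exactly the $k$ distinct right cosets of $H$ in $G$, so the map $i \mapsto j_i$ is a faithful bookkeeping of how right multiplication by $g$ acts on this finite set of cosets.

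Next I would check injectivity of $i \mapsto j_i$. Suppose $j_i = j_{i'}$. Then $H g_i g = H g_{j_i} = H g_{j_{i'}} = H g_{i'} g$. Multiplying on the right by $g^{-1}$ gives $H g_i = H g_{i'}$, and since distinct representatives lie in distinct cosets this forces $g_i = g_{i'}$, hence $i = i'$. Thus $i \mapsto j_i$ is an injective self-map of the finite set $\{1, \dots, k\}$, and therefore a bijection. Consequently every $g_m \in I$ equals some $g_{j_i}$, which is exactly the statement $J_g = I$.

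I do not expect any genuine obstacle here: the only thing to be slightly careful about is invoking that right multiplication by $g$ is invertible (with inverse right multiplication by $g^{-1}$) to cancel $g$ in the coset identity, and that the chosen representatives are distinct so that equality of cosets lifts to equality of indices. Both are routine. If one prefers a more conceptual phrasing, one can simply note that $G$ acts on the right on the coset space $H \backslash G$ by a transitive action, so the element $g$ acts as a permutation of the $k$ cosets, and $i \mapsto j_i$ is the corresponding permutation read off through the bijection $\{1,\dots,k\} \leftrightarrow H \backslash G$, $i \mapsto H g_i$.
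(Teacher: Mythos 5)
Your proof is correct and follows essentially the same idea as the paper: both arguments come down to the fact that right multiplication by $g$ permutes the right cosets $Hg_1,\dots,Hg_k$. The paper deduces $J_g=I$ by noting that $G=Gg=\bigcup_i Hg_{j_i}$ is still a disjoint cover of $G$, whereas you verify injectivity of $i\mapsto j_i$ and invoke finiteness; these are just two phrasings of the same permutation argument.
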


\begin{proof}
Let $g \in G$. Since $I$ is a system of coset representatives, the product of $g_i$ with $g$ can indeed be written for all $i \in \{1, \dots ,k \}$  as $g_i g = h_{i}g_{j_i}$ where $h_{i} \in H$ and $j_i \in \{1, \dots, k \}$. Then 
\[
G=Gg= \bigcup_{i=1}^n Hg_ig = \bigcup_{i=1}^n H h_{i}g_{j_i} = \bigcup_{i=1}^n H g_{j_i}
\]
where the union is disjoint. Therefore, the set $J_g$ consisting of all $g_{j_i}$ appearing in this disjoint union needs to contain all coset representatives. That is $J_g=I$ independently of the choice of $g \in G$. 
\end{proof}

\begin{lemma}\label{finindexinvariant}
Let $\psi \colon G \to \R$ be a quasimorphism invariant under a subgroup $H \leq \Aut(G)$ of index $k \in \N$. Let $\{f_1, \dots, f_k \}$ be a set of right coset representatives. Let $\widehat{\psi} \colon G \to \R$ be defined by $\widehat{\psi}(g) = \sum_{i=1}^k \psi(f_i(g))$ for all $g \in G$. Then $\widehat{\psi}$ is an $Aut$-invariant quasimorphism on $G$.
\end{lemma}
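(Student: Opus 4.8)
The plan is to verify the two defining properties of $\widehat\psi$ in turn, treating the quasimorphism property as essentially formal and reserving the genuine argument for $\Aut$-invariance, which is where Lemma \ref{I=J} enters.

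First I would note that each coset representative $f_i \in \Aut(G)$ is in particular a homomorphism $G \to G$, so $\psi \circ f_i$ is again a quasimorphism with $D(\psi \circ f_i) \leq D(\psi)$; being a sum of $k$ such maps, $\widehat\psi$ is a quasimorphism with $D(\widehat\psi) \leq k\, D(\psi)$. This step requires no cleverness.

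For the invariance, I would fix $\varphi \in \Aut(G)$ and $g \in G$ and rewrite
\[
\widehat\psi(\varphi(g)) \;=\; \sum_{i=1}^k \psi\bigl(f_i(\varphi(g))\bigr) \;=\; \sum_{i=1}^k \psi\bigl((f_i\varphi)(g)\bigr).
\]
The key point is that $\{f_1,\dots,f_k\}$ is a set of right coset representatives for $H$ inside the group $\Aut(G)$, so applying Lemma \ref{I=J} to the group $\Aut(G)$, the finite-index subgroup $H$, the representatives $\{f_1,\dots,f_k\}$, and the element $\varphi$, I may write $f_i\varphi = h_i f_{j_i}$ with $h_i \in H$ and conclude that $i \mapsto j_i$ is a permutation of $\{1,\dots,k\}$. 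Then the $H$-invariance of $\psi$ gives $\psi((f_i\varphi)(g)) = \psi(h_i(f_{j_i}(g))) = \psi(f_{j_i}(g))$, whence $\widehat\psi(\varphi(g)) = \sum_{i=1}^k \psi(f_{j_i}(g)) = \sum_{j=1}^k \psi(f_j(g)) = \widehat\psi(g)$, as required, and $\varphi,g$ were arbitrary.

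The only thing that needs care — and the step I expect to be the main (if modest) obstacle — is the reindexing: one has to see that right translation by $\varphi$ merely permutes the right cosets $Hf_i$, which is precisely the content of Lemma \ref{I=J} once it is transported from $G$ to $\Aut(G)$; everything else is bookkeeping. I would also remark in passing that $\widehat\psi$ will in general fail to be homogeneous even when $\psi$ is, but one can pass to its homogenisation afterwards via Lemma \ref{HomogenisationQM} without destroying $\Aut$-invariance, exactly as in the examples above, so nothing is lost for the later applications.
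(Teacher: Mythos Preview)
Your proof is correct and follows exactly the same approach as the paper's: write $f_i\varphi = h_i f_{j_i}$, invoke Lemma~\ref{I=J} (transported to $\Aut(G)$) to see that $i\mapsto j_i$ is a permutation, and use $H$-invariance of $\psi$ to reindex the sum.

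One correction to your closing remark: if $\psi$ is homogeneous then $\widehat\psi$ \emph{is} homogeneous as well, since each $f_i$ is a homomorphism and hence $\widehat\psi(g^n)=\sum_i\psi(f_i(g)^n)=n\sum_i\psi(f_i(g))=n\,\widehat\psi(g)$; the paper uses this fact in the proof of Theorem~\ref{T3}, so no separate homogenisation step is needed there.
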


\begin{proof}
Clearly, $\widehat{\psi}$ is a quasimorphism since it is a finite sum of quasimorphisms. Let $\theta \in Aut(G)$ and $g \in G$. For all $i$ we can uniquely write $f_i \circ \theta= h_{i} \circ f_{j_i}$ where $j_i \in \{1, \dots, k\}$ and $h_{i} \in H$ since $\{f_1, \dots, f_k \}$ is a set of right coset representatives of $H$ in $\Aut(G)$. We calculate
\begin{align*}
\widehat{\psi}(\theta(g))  &= \sum_{i=1}^k \psi(f_i(\theta(g)))  
 = \sum_{i=1}^k \psi((f_i \circ \theta)(g)) 
 = \sum_{i=1}^k \psi((h_{i} \circ f_{j_i})(g)) \\
 &\overset{\mathclap{\strut\text{H-invariance of } \psi}}=  \hspace{8mm}
 \sum_{i=1}^k \psi( f_{j_i}(g)) 
 \hspace{6mm} \overset{\mathclap{\strut\text{Lemma \ref{I=J}}}}= \hspace{6mm} \sum_{i=1}^k  \psi(f_i(g))
 = \widehat{\psi}(g).
\end{align*}
The penultimate equality follows from the fact that due to Lemma \ref{I=J} the set of all $f_{j_i}$ appearing in $\sum_{i=1}^k \psi( f_{j_i}(g))$ agrees with the set of all $f_i$ appearing in $\sum_{i=1}^k  \psi(f_i(g))$. 
\end{proof}

\section{Code quasimorphisms}

\begin{remark}
This entire section on the construction of (weighted) code quasimorphisms is only required for the last step of the proof of Theorem \ref{T5} where the naturality statement of Lemma \ref{naturalitycode} is used. All other results can be deduced purely from Theorem \ref{T1} without any knowledge of the explicit construction of Aut-invariant quasimorphisms on a free product of two freely indecomposable groups. 
\end{remark}

Recall that a \textit{tuple} always refers to a finite sequence and therefore all tuples are naturally ordered.

\begin{defi}[Code]
Let $A$ and $B$ be groups. Write $g \in A \ast B$ in its reduced form. Assign two tuples of non-zero natural numbers that we will call \textit{codes} to $g$ as follows. The tuple  $( a_1, \dots, a_k)$ of letters from $A$ appearing in the reduced form of $g$ is called the \textit{A-tuple} of $g$. We then count how often any one letter of $( a_1, \dots, a_k)$ appears consecutively which yields a tuple of positive numbers $\Acode(g)=(n_1, n_2, \dots, n_r)$ called the \textit{A-code} of $g$. In the same way we obtain the \textit{B-tuple} of $g$ as the tuple of letters from $B$ appearing in the reduced form of $g$ and the \textit{B-code} of $g$, denoted $\Bcode(g)$, by counting consecutive appearances of letters in the \textit{B-tuple}.  

\end{defi}

\textit{Code quasimorphisms} are defined in the spirit of Brooks's counting quasimorphisms and are counting the occurrences of a tuple of natural numbers in the $\Acode$ and $\Bcode$ associated to an element in the free product $A \ast B$. 

\begin{defi}[Code quasimorphisms]
Let $k \geq 1$ and let $z=(n_1, \dots ,n_k)$ be a tuple of non-zero natural numbers $n_1, \dots ,n_k$. Let $C \in \{ A, B \}$. Define
$\theta^C_z \colon A \ast B \to \Z_{\geq 0}$ to count the maximal number of \textit{disjoint} occurrences of $z$ as a tuple of consecutive numbers in the $\Ccode$ for all $g \in A \ast B$. Further, define the \textit{code quasimorphism} 
\begin{align*}
f^C_z \colon A \ast B \to \Z \hspace{5mm} \text{by} \hspace{5mm} f^C_z(g)= \theta^C_{z}(g) - \theta^C_{z}(g^{-1})
\end{align*}
for all $g \in A \ast B$. It is shown in \cite[Lemma 4.8]{ich} that code quasimorphisms are indeed quasimorphisms. By definition any code quasimorphism is bounded by 1 on all letters. 
\end{defi}

\begin{example}
Let $G=A \ast B$ for $A= \Z/5$ and $B \neq 1$ be freely indecomposable. Consider  
\[
g=a^4ba^2ba^2ba^3bababa^3bababa^2ba^2ba^2b
\]
for non-trivial $a \in A, b \in B$. Then the A-tuple of $g$ is $( a^4,a^2,a^2,a^3,a,a,a^3, a, a, a^2,a^2,a^2)$. It follows that $\Acode(g)=(1,2,1,2,1, 2,3)$. For $z=(1,2,1)$ we calculate that $\theta^A_z(g)=1$ since we only count disjoint occurrences. Note that $\Acode(g^{-1})=(3,2,1,2,1,2,1)$. So $\theta^A_{z}(g^{-1})=1$ and $f^A_z(g)=0$. However, for $w=(1,2,3)$ we have $\theta^A_w(g)=1$ and $\theta^A_{w}(g^{-1})=0$. So $f^A_w(g)=1$. In fact, for all $n \in \N$ we have $\theta^A_w(g^n)=n$, $\theta^A_{w}(g^{-n})=0$ and so $f^A_w(g^n)=n$. This implies that the homogenisation $\bar{f}^A_w$, whose Aut-invariance for $B \notin \{ \Z/2, \Z \}$ will be established by Proposition \ref{freeprod no Z prop} , satisfies $\bar{f}^A_w(g) > 0$ and is unbounded on powers of $g$. 
\end{example}

\begin{defi}
A tuple of non-zero natural numbers $z=(n_1, \dots, n_k)$  is defined to be \textit{generic} if $\bar{z}$ does not occur as a tuple of $k$ adjacent numbers in $z^2=(n_1, \dots, n_k, n_1, \dots n_k)$. 
\end{defi}

\begin{example}
Let $z=(n_1, \dots, n_k)$. If $k \leq 2$, $z$ is not generic. If $k \geq 3$ and the $n_i$ are pairwise distinct, then $z$ is generic. E.g. for $z=(1,2,3)$ we have $\bar{z}=(3,2,1)$ does not appear in $z^2=(1,2,3,1,2,3)$. 
\end{example}

\begin{theorem}\cite[Prop. 4.11]{ich} \label{freeprod no Z prop}
Let $A \ast B$ be a free product of two freely indecomposable groups $A$ and $B$, neither of which is infinite cyclic. Then for any generic tuple of natural numbers $z$ the following holds:
\begin{enumerate}
\item if $A \ncong B$ and $C \in \{A,B \}$ is such that $C \ncong \Z/2$, then the homogenisation $\bar{f}^C_z$ of the quasimorphism $f^C_z$ is an unbounded Aut-invariant quasimorphism on $A \ast B$;
\item if $A \cong B \ncong \Z/2$, then the sum $\bar{f}^A_z + \bar{f}^B_z$ is an unbounded Aut-invariant quasimorphism on $A \ast B$. 
\end{enumerate}
In both cases the space of homogeneous Aut-invariant quasimorphisms on $A \ast B$ has infinite dimension.
\end{theorem}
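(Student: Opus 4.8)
The plan is to verify the three defining properties of $\bar{f}^C_z$ — homogeneous quasimorphism, unbounded, $\Aut$-invariant — and then extract the dimension statement. That $f^C_z$ is a quasimorphism is cited as \cite[Lemma 4.8]{ich}, so by Lemma \ref{HomogenisationQM} the homogenisation $\bar{f}^C_z$ is a homogeneous quasimorphism within distance $D(f^C_z)$ of $f^C_z$; since homogenisation is additive, $\bar{f}^A_z+\bar{f}^B_z=\overline{f^A_z+f^B_z}$ is also a homogeneous quasimorphism. I will use freely that homogeneous quasimorphisms are conjugation-invariant.

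For unboundedness it suffices to exhibit one element $g$ with $\bar{f}^A_z(g)>0$ (in case (2) the same $g$ works, since it will contribute only to the $A$-code). As $C=A\ncong\Z/2$, the factor $A$ has two distinct non-trivial elements $\alpha_1\neq\alpha_2$; fix also a non-trivial $b\in B$. I would take $g$ to be the reduced word built from $n_1$ copies of $\alpha_1$, then $n_2$ copies of $\alpha_2$, then $n_3$ copies of $\alpha_1$, and so on, alternating the two letters and inserting a single $b$ between each pair of consecutive $A$-letters, with one or two ``large'' buffer blocks (of size exceeding $\max_i n_i$) appended so that the first and last $A$-blocks of one period of $g$ use distinct letters. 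Then $\Acode(g)$ equals $z$ followed by the buffer, consecutive periods of $g^n$ do not merge, so $\theta^A_z(g^n)=n$, whereas $\Acode(g^{-1})$ is the reversal, and genericity of $z$ (no copy of $\bar z$ inside $z^2$) together with the large, distinct buffer entries rules out any occurrence of $z$ in $\Acode(g^{-n})$, so $\theta^A_z(g^{-n})=0$. Hence $f^A_z(g^n)=n$ and $\bar{f}^A_z(g)=1$.

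The main work is $\Aut$-invariance, where the plan is to invoke the classical description of $\Aut(A\ast B)$ for $A$, $B$ freely indecomposable and neither infinite cyclic (Fouxe--Rabinovitch): this group is generated by $\Aut(A)\times\Aut(B)$, the partial conjugations (conjugating one factor by a single letter of the other and fixing the other), the inner automorphisms, and, when $A\cong B$, the swap. Since invariance under a generating set of $\Aut(A\ast B)$ implies invariance under all of it, and $\bar{f}^A_z$ is conjugation-invariant, it is enough to treat the three remaining families, plus the swap in case (2). An element of $\Aut(A)\times\Aut(B)$ acts on a reduced word by relabelling $A$-letters through a bijection of $A$ and $B$-letters through a bijection of $B$, so it preserves $\Acode(h)$ for every $h$, hence the value $f^A_z$, hence $\bar{f}^A_z$. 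For a partial conjugation $\gamma$, say the one conjugating $A$ by some $b\in B$, I would first pass to a cyclically reduced conjugate of $g$; a single-letter $g$ gives $\bar{f}^A_z(g)=0$ on either side (the $A$-code of a power of a letter has length $\leq 1<k$), while for $g=a_1b_1\cdots a_mb_m$ a short computation gives $b^{-1}\gamma(g)b=a_1(b^{-1}b_1b)a_2(b^{-1}b_2b)\cdots a_m(b^{-1}b_mb)$, a reduced word with the same $A$-tuple as $g$, and likewise for every power $g^n$, whence $\bar{f}^A_z(\gamma(g))=\bar{f}^A_z(b^{-1}\gamma(g)b)=\bar{f}^A_z(g)$. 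Conjugating $B$ by some $a\in A$ is analogous: a harmless conjugation by $a^{-1}$ turns the image into a word whose $A$-letters are all conjugated by $a$, again leaving $\Acode$ unchanged. In case (2) the swap carries the $A$-tuple to the $B$-tuple and conversely, so it interchanges $\theta^A_z$ with $\theta^B_z$ and $f^A_z$ with $f^B_z$, hence fixes $\bar{f}^A_z+\bar{f}^B_z$, while each summand is separately invariant under $\Aut(A)\times\Aut(B)$ and the partial conjugations by the arguments above.

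For the final assertion I would choose generic tuples $z^{(1)},z^{(2)},\dots$ of length at least $3$ with pairwise disjoint entry-sets (for instance with all entries distinct and strictly larger than all entries used before), and the associated words $g^{(1)},g^{(2)},\dots$ built in the $A$-factor as above. Then $\bar{f}^A_{z^{(m)}}(g^{(j)})=\delta_{mj}$, because an entry of $z^{(m)}$ cannot appear in $\Acode(g^{(j)})$ or in $\Acode((g^{(j)})^{-1})$ when $m\neq j$, and this diagonal pairing forces the $\bar{f}^A_{z^{(m)}}$ — resp.\ the $\bar{f}^A_{z^{(m)}}+\bar{f}^B_{z^{(m)}}$ in case (2), since each $g^{(j)}$ has trivial $B$-code — to be linearly independent, so the space is infinite dimensional. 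I expect the $\Aut$-invariance step to be the only real obstacle: it relies on having the Fouxe--Rabinovitch generating set, which is exactly where the hypothesis that neither factor is infinite cyclic is used (it excludes Nielsen-type transvections, which do not respect $\Acode$), and it requires a careful treatment of reduced forms under partial conjugations and of the non-cyclically-reduced case.
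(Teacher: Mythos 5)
This statement is imported from \cite[Prop.~4.11]{ich}; the present paper gives no proof of it, so there is no internal argument to compare against. Your route is the expected one: cite the quasimorphism property, homogenise, check invariance on the Fouxe--Rabinovitch generators (factor automorphisms, partial conjugations, inner automorphisms, and the swap when $A\cong B$ --- transvections being absent precisely because neither factor is infinite cyclic), and exhibit explicit elements plus a diagonal family of tuples for unboundedness and infinite dimension. The invariance analysis is correct: factor automorphisms and the conjugated forms $b^{-1}\gamma(g)b$, $a^{-1}\delta(g)a$ of partial conjugations preserve the $A$-code of a cyclically reduced representative, homogeneous quasimorphisms are conjugation-invariant, and the swap interchanges $f^A_z$ and $f^B_z$.

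The one step that fails as literally written is the unboundedness witness. You insert a $b$ only \emph{between} consecutive $A$-letters, so your $g$ ends in an $A$-letter; then in the reduced form of $g^n$ the last $A$-letter of each period multiplies with the first $A$-letter of the next period into a single letter (possibly even the identity, e.g.\ when $A=\Z/3$ and the buffer letter is $\alpha_1^{-1}$). Hence $\Acode(g^n)$ is not the $n$-fold concatenation of $\Acode(g)$: every period after the first has its leading run shortened and an extra length-one run inserted, the pattern $z$ is destroyed there, and $\theta^A_z(g^n)$ need not grow linearly, so the claimed $\bar{f}^A_z(g)\geq 1$ is not justified. Making the first and last $A$-blocks use distinct letters prevents runs from merging across a separating $B$-letter, but it does not prevent this letter-level merging. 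The fix is immediate: end each period with a $B$-letter (compare the sample word in the example following the definition of code quasimorphisms, which ends in $b$), so that $g$ is cyclically reduced with first letter in $A$ and last letter in $B$; then $g^n$ is reduced as written, $\Acode(g^n)$ is periodic, and your computation $\theta^A_z(g^n)=n$, $\theta^A_z(g^{-n})=0$ (using the oversized buffer entries together with $z\neq\bar{z}$, which is all of genericity you actually need here) goes through, as does the rest of the argument, including the $\delta_{mj}$-pairing giving infinite dimension.
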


\subsection{Weighted code quasimorphisms}

\begin{defi}[Weighted $\Z$-code]
Let $B$ be a group which is freely indecomposable and not infinite cyclic. Write $g \in \Z \ast B$ in reduced form. Let $(a_1, \dots , a_k)$ be the $\Z$-tuple of $g$. Associate a tuple $(x_1, \dots , x_\ell)$ of non-zero natural numbers to $g$ as follows. Consider the successive subsequences of maximal length in $(a_1, \dots ,a_k)$ consisting of integers all of the same sign. For the $i$-th such sequence, we define $x_i$ to be the absolute value of the sum of integers in that sequence. The tuple $(x_1, \dots , x_\ell)$ is called the \textit{weighted $\Z$-code} of $g$.
\end{defi}

\begin{example}
Let $B$ be a non-trivial group and let $b_i \in B$ be non-trivial elements. Then the reduced word 
\begin{align*}
w=8b_1 (-4) b_2 (-4) b_3 (-1) b_4 7b_5 2 b_6 (-3)
\end{align*} 
has the $\Z$-tuple $(8,-4.-4,-1,7,2,-3)$ which yields the weighted $\Zcode$ $(8,9,9,3)$.
\end{example}

\begin{defi}[Weighted code quasimorphisms]
Let $z=(n_1, \dots , n_k)$ be a tuple of non-zero natural numbers. Define $\theta^\Z_z \colon \Z \ast B \to \Z_{\geq 0}$ to count the number of \textit{disjoint} occurrences of $z$ as a tuple of consecutive numbers inside the weighted $\Zcode$ of $g \in \Z \ast B$. For $g \in \Z \ast B$ define the \textit{weighted code quasimorphism}  
\begin{align*}
f^\Z_z \colon \Z \ast B \to \Z \hspace{5mm} \text{by} \hspace{5mm} f^\Z_z(g)= \theta^{\Z}_{z}(g) - \theta^\Z_{z}(g^{-1}).
\end{align*}
\end{defi}

\begin{theorem}\cite[Prop. 5.7]{ich} \label{freeprod with Z}
Let $B$ be a freely indecomposable group which is not infinite cyclic. Then for any generic tuple of natural numbers $z$ the homogenisation $\bar{f}^\Z_z \colon \Z \ast B \to \R$ of the quasimorphism $f^\Z_z$ is an unbounded Aut-invariant quasimorphism on $\Z \ast B$. Moreover, the space of homogeneous Aut-invariant quasimorphisms on $\Z \ast B$ that vanish on letters has infinite dimension. 
\end{theorem}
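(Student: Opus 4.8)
The plan is to verify, for a generic tuple $z$, five things about $\bar f^\Z_z$ in turn: that it is a well-defined homogeneous quasimorphism, that it is $\Aut$-invariant, that it vanishes on all letters, that it is unbounded, and finally that varying $z$ yields infinitely many linearly independent such maps. The first point is immediate: $f^\Z_z$ is a quasimorphism by the weighted analogue of the argument of \cite[Lemma~4.8]{ich} (this being part of \cite[Prop.~5.7]{ich}), and Lemma~\ref{HomogenisationQM} then makes $\bar f^\Z_z$ a homogeneous quasimorphism lying a bounded distance from $f^\Z_z$.

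The core of the argument is $\Aut$-invariance, and the idea is that the weighted $\Z$-code is \emph{literally unchanged} by a generating set of $\Aut(\Z\ast B)$. Since $\Z$ is infinite cyclic and $B$ is freely indecomposable but not infinite cyclic, $\Z\ast B$ is its own Grushko decomposition, and by the Fouxe--Rabinovitch presentation of automorphism groups of free products (see \cite{ich}) $\Aut(\Z\ast B)$ is generated by $\Aut(B)$, the inversion $\iota\colon t\mapsto t^{-1}$ of the free factor, the transvections $\rho_b\colon t\mapsto tb$ and $\lambda_b\colon t\mapsto bt$ (each fixing $B$ pointwise, $b\in B$), and $\Inn(\Z\ast B)$ — partial conjugations being accounted for, as $t\mapsto btb^{-1}$ equals $\lambda_b\circ\rho_{b^{-1}}$ and conjugation of the $B$-factor by a power of $t$ is inner. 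Homogeneity disposes of inner automorphisms; an automorphism of $B$ merely renames the still-non-trivial $B$-letters and leaves the $\Z$-tuple of every element untouched; and $\iota$ negates every entry of every $\Z$-tuple, preserving both the sign pattern of its maximal runs and the absolute values of their sums, hence the weighted $\Z$-code. The delicate point — and the step I expect to be the main obstacle — is that $\rho_b$ (and symmetrically $\lambda_b$) preserves the weighted $\Z$-code: $\rho_b$ subdivides a syllable $t^a$ into $|a|$ copies of $t^{\sgn a}$ separated by copies of $b^{\sgn a}$, and one must check that within a maximal same-sign run of the $\Z$-tuple the intervening $B$-letters either stay non-trivial or cancel out completely — in the latter case simply merging two adjacent equal-sign $\Z$-letters — so the run is replaced by a same-sign run of the same absolute sum, while at a sign change the separating $B$-letter takes the form $b\,b_i\,b^{-1}$ or $b_i$ and hence remains non-trivial, and analogously at the ends of the whole word. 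This produces a sum-preserving bijection between the maximal runs of $g$ and of $\rho_b(g)$, so $\theta^\Z_z$, $f^\Z_z$ and $\bar f^\Z_z$ are all $\rho_b$-invariant, and $\Aut$-invariance follows.

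Vanishing on letters is a direct check: a letter of $B$ has empty $\Z$-tuple and a syllable $t^a$ has weighted $\Z$-code $(|a|)$, so a generic $z$ (of length $\geq 3$) occurs in neither, and $\bar f^\Z_z$ vanishes on all letters and their powers. For unboundedness, fix a non-trivial $b_0\in B$ and a generic $z=(n_1,\dots,n_k)$, set $\epsilon_i=(-1)^{i+1}$, take $\ell=k$ when $k$ is even and $\ell=k+1$ with a fresh entry $n_{k+1}=N\notin\{n_1,\dots,n_k\}$ when $k$ is odd, and let
\[
g_z \;=\; t^{\epsilon_1 n_1}\,b_0\,t^{\epsilon_2 n_2}\,b_0\,\cdots\,t^{\epsilon_\ell n_\ell}\,b_0 .
\]
This reduced word has weighted $\Z$-code $(n_1,\dots,n_\ell)$, whose first run is positive and last run negative, so no merging occurs at the self-junction and the weighted $\Z$-code of $g_z^{\,n}$ is the $n$-fold concatenation $(n_1,\dots,n_\ell)^n$; reading $z$ off the first $k$ entries of each block gives $\theta^\Z_z(g_z^{\,n})\geq n$. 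The weighted $\Z$-code of $g_z^{-n}$ is the reverse, assembled from the block $(n_\ell,\dots,n_1)$, and since a length-$k$ tuple occurring in such a concatenation must lie inside one block or straddle two consecutive ones, the genericity of $z$ — which gives $z\neq\bar z$ as well as that $\bar z$ does not occur in $z^2$, equivalently that $z$ does not occur in $\bar z^2$ — forces $\theta^\Z_z(g_z^{-n})=0$. Hence $\bar f^\Z_z(g_z)=\lim_n f^\Z_z(g_z^{\,n})/n\geq 1>0$.

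Finally, for infinite dimension, take $z_i=(1,2,\dots,i)$ for $i\geq 3$, which is generic since its strictly decreasing reverse is not among the cyclic rotations of $(1,\dots,i)$ that occur in $z_i^2$, and when $i$ is odd choose the auxiliary entry of $g_{z_i}$ to be $N_i=i+3$. Then every value appearing in the weighted $\Z$-code of $g_{z_i}^{\,\pm n}$ lies in $\{1,\dots,i,N_i\}$, which omits $i+1$; since $i+1$ is an entry of $z_j$ for every $j>i$, this gives $\bar f^\Z_{z_j}(g_{z_i})=0$ whenever $j>i$, whereas $\bar f^\Z_{z_i}(g_{z_i})>0$ by the previous paragraph. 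Thus the matrix $\big(\bar f^\Z_{z_j}(g_{z_i})\big)_{i,j\geq 3}$ is lower triangular with non-zero diagonal, so every finite subfamily of $\{\bar f^\Z_{z_i}\}_{i\geq 3}$ is linearly independent; combined with the vanishing on letters this exhibits an infinite-dimensional space of homogeneous $\Aut$-invariant quasimorphisms on $\Z\ast B$ vanishing on letters.
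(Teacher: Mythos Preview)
The paper does not actually prove this statement: it is quoted verbatim as \cite[Prop.~5.7]{ich} and used as a black box, so there is no in-paper argument to compare against. Your proposal is a faithful and essentially correct reconstruction of the proof one expects from \cite{ich}: the Fouxe--Rabinovitch generating set reduces $\Aut$-invariance to factor automorphisms of $B$, inversion of $t$, the transvections $t\mapsto tb$ and $t\mapsto bt$, and inner automorphisms, and your observation that the weighted $\Z$-code is literally preserved by each of these (the delicate transvection step being handled by the conjugate-of-$b_i$ argument at sign changes and the sum-preservation within same-sign runs) is exactly the mechanism behind the cited result.

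Two minor points worth tightening if you write this out in full. First, in your transvection paragraph you should record explicitly that when two adjacent $\Z$-letters of the \emph{same} sign merge (because the intervening $B$-letter becomes trivial), the maximal-run structure is unchanged and only the internal partition of that run changes --- you say this, but the phrase ``cancel out completely'' could be misread as producing a trivial $\Z$-letter rather than a merged one. Second, in the unboundedness step for odd $k$ and the auxiliary entry $N$, your window argument is correct but relies on $k<\ell=k+1$ to bound the span to two consecutive blocks; stating this length comparison would make the case analysis airtight. Neither point is a genuine gap.
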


\subsection{Naturality of (weighted) code quasimorphisms with respect to inclusions}

\begin{lemma}\label{naturalitycode}
Let $A,B,C,D$ be freely indecomposable groups such that $ C \leq A$, $D \leq B$ are subgroups. Let $G=A \ast B $ and let $H=C \ast D$ inside $G$. Let $z$ be any tuple of positve integers and consider the counting functions $\theta^A_z \colon G \to \Z$ and $\theta^C_z \colon H \to \Z$. Then $(\theta^A_z)|_{H} = \theta^C_z$. Consequently, we have $(f^A_z)|_{H} = f^C_z$ for the corresponding code quasimorphisms which implies $(\bar{f}^A_z)|_{H} = \bar{f}^C_z$ for their homogenisations.
\end{lemma}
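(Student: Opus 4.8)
The plan is to reduce everything to the single combinatorial observation that, for every $g \in H$, the $A$-tuple of $g$ computed inside $G = A \ast B$ agrees \emph{as an ordered sequence of group elements} with the $C$-tuple of $g$ computed inside $H = C \ast D$. Granting this, $\Acode(g) = \Ccode(g)$ (counting consecutive repetitions in a tuple depends only on the tuple), hence $\theta^A_z(g)$ and $\theta^C_z(g)$ — each being the maximal number of disjoint occurrences of $z$ inside the respective code — coincide, which is the asserted equality $(\theta^A_z)|_H = \theta^C_z$. Applying this to $g$ and to $g^{-1} \in H$ and subtracting gives $(f^A_z)|_H = f^C_z$. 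For the homogenisations, note that $g \in H$ forces $g^n \in H$ for all $n$, so
\[
\bar{f}^A_z(g) = \lim_{n \to \infty} \frac{f^A_z(g^n)}{n} = \lim_{n \to \infty} \frac{f^C_z(g^n)}{n} = \bar{f}^C_z(g),
\]
i.e. $(\bar{f}^A_z)|_H = \bar{f}^C_z$.

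To prove the combinatorial observation I would argue via the normal form theorem for free products. Write $g \in H$ in its reduced form as an element of $C \ast D$: an alternating word whose letters lie in $C \setminus \{1\}$ and $D \setminus \{1\}$. Since $C \leq A$ and $D \leq B$, each such letter is also a non-trivial element of $A$, respectively $B$, and consecutive letters still lie in different factors of $A \ast B$; thus the very same word is reduced in $G = A \ast B$. By uniqueness of reduced forms, it is \emph{the} reduced form of $g$ in $G$ (in particular this re-proves that the canonical map $C \ast D \to A \ast B$ is injective, so the identification of $H$ with $C \ast D$ made in the statement is legitimate). Consequently the letters of $g$ that lie in $A$ are exactly the letters that lie in $C$, appearing in the same order, which is precisely the claim about the two tuples.

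The only step demanding any care is this compatibility of reduced forms under the inclusion $C \ast D \hookrightarrow A \ast B$; after that, the lemma is pure bookkeeping from the definitions of $\theta^C_z$, $f^C_z$ and the homogenisation. (Free indecomposability of $A, B, C, D$ is the standing hypothesis of the section but plays no role in this particular statement.)
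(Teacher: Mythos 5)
Your proposal is correct and follows essentially the same route as the paper: the key point in both is that a reduced word in $C \ast D$ remains reduced in $A \ast B$, so the $A$-tuple of any $h \in H$ coincides with its $C$-tuple, and the equalities for the codes, the counting functions, the code quasimorphisms and their homogenisations follow by unwinding the definitions. Your write-up merely spells out the normal-form argument and the limit computation a bit more explicitly than the paper does.
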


\begin{proof}
Any word in $H$ is reduced if and only if its image in $G$ is reduced. Therefore, for any $h \in H$ with $C$-tuple given by $(c_1, \dots, c_k)$ where $c_i \in C$ for all $i$ we have that the $A$-tuple of $h$ is given by $(c_1, \dots, c_k)$ as well. Consequently, we have $\Acode(h) = \Ccode(h)$ for all $h \in H$. So $(\theta^A_z)|_{H} = \theta^C_z$ for all tuples $z$ of positive integers and the rest of the statement follows as well. 
\end{proof}

\section{Graph products of abelian groups}

\subsection{Definitions}

\begin{defi}[Graph]
A finite graph $\Gamma$ is a pair $(V,E)$ consisting of a non-empty finite set of vertices $V$ and a finite set of unoriented edges $E$ where an unoriented edge is a subset of $V$ of cardinality two. So two distinct vertices have at most one edge between them and no vertex has an edge to itself. A graph $\Gamma$ is called \textit{complete} if there exists an edge between all pairs of distinct vertices in $\Gamma$. For a subset $X \subset V$ we denote by $\Gamma_X$ the subgraph of $\Gamma$ spanned by $X$. 
\end{defi}

\begin{defi}
Let $\Gamma=(V,E)$ be a graph.
\begin{enumerate}
\item The link of $v \in V$ is defined to be $lk(v)= \{ x \in V : (v,x) \in E\}$.
\item The star of $v \in V$ is defined to be $st(v) = lk(v) \cup \{v \}$. 
\end{enumerate}
\end{defi}

\begin{defi}[{\cite[Def. 5.4]{Gutierrez}}]
A preorder is a relation that is reflexive and transitive. We define two preorders on any graph $\Gamma=(V,E)$ by: 
\begin{align*}
v \leq w & \hspace{2mm} \text{ if and only if } \hspace{2mm} lk(v) \subset st(w),  \\
v \leq_s w & \hspace{2mm} \text{ if and only if } \hspace{2mm} st(v) \subset st(w). 
\end{align*} 
\end{defi}

The preorder $\leq $ has already been defined before in \cite{Charney} with the proof of transitivity given in \cite[Lemma 2.1]{Charney}.

\begin{defi}[Graph product]
Let $\Gamma = (V,E)$ be a finite graph. Let $\{G_v\}_{v \in V}$ be a set of non-trivial groups. The graph product defined by $\Gamma$ and $\{G_v \}_{v \in V}$ is the group 
\begin{align*}
W(\Gamma, \{G_v\}_{v \in V})=(\ast_{v \in V} G_v)/N,
\end{align*}
where $N$ is the normal subgroup generated by all $[G_x, G_y]$ for $x,y \in V$ such that $\{x,y\} \in E$.  
\end{defi}

\begin{defi}
Let $W(\Gamma, \{G_v\}_{v \in V})$ be a graph product of groups. If all vertex groups are $\Z/2$, then $W(\Gamma, \{G_v\}_{v \in V})$ is the \textit{right angled Coxeter group} on the graph $\Gamma$. If all vertex groups are infinite cyclic, then $W(\Gamma, \{G_v\}_{v \in V})$ is the \textit{right angled Artin group} on $\Gamma$ which will be denoted by $R_\Gamma$ from now onwards.
\end{defi}

\begin{defi}[Truncated subgroup]
Let $\Gamma=(V,E)$ be a graph and $\{G_v\}_{v \in V}$ be a collection of finitely generated abelian groups. Let $V^\prime \subset V$ be a non-empty subset of vertices and let $\Gamma^\prime$ be the subgraph of $\Gamma$ spanned by $V^\prime$. Then $W(\Gamma^\prime, \{G_v \}_{v \in V^\prime})$ is called a \textit{truncated subgroup} of $W(\Gamma, \{G_v \}_{v \in V})$. If $W(\Gamma, \{G_v \}_{v \in V})$ decomposes as the cartesian product of the truncated subgroup $W(\Gamma^\prime, \{G_v \}_{v \in V^\prime})$ spanned by $V^\prime \subset V$ and the one spanned by $V-V^\prime$, then $W(\Gamma^\prime, \{G_v \}_{v \in V^\prime})$ is called a \textit{direct truncated subgroup} of $W(\Gamma, \{G_v \}_{v \in V})$.
\end{defi}

A group that is isomorphic to $\Z/p^k$ where $k$ is a positive integer and $p$ is a prime is called \textit{primary}. Recall that finitely generated abelian groups can be uniquely decomposed into a cartesian product with primary and infinite cyclic factors. Thus, one sees that for any graph product of finitely generated abelian groups $W(\Gamma, \{G_v\}_{v \in V})$ there is a graph $\Gamma^\prime=(V^\prime ,E^\prime)$ together with a collection of groups $\{G_v^\prime \}_{v \in V^\prime}$, where each $G_v^\prime$ is primary or infinite cyclic, such that $W(\Gamma, \{G_v\}_{v \in V}) \cong W(\Gamma^\prime, \{G_v^\prime \}_{v \in V^\prime})$. Namely, $\Gamma^\prime=(V^\prime,E^\prime)$ is obtained by replacing each $v \in V$ by the complete graph with vertices corresponding to the generators of the primary and infinite cyclic factors of $G_v$. We say that $\Gamma^\prime$ is the graph \textit{expanding} $\Gamma$ in this case. Clearly, for a graph product of finite abelian groups $W(\Gamma, \{G_v\}_{v \in V})$ there is a collection $\{G_v^\prime \}_{v \in V^\prime}$ consisting only of primary groups such that $W(\Gamma, \{G_v\}_{v \in V}) \cong W(\Gamma, \{G_v^\prime \}_{v \in V^\prime})$. 

We simplify notation by denoting $ W(\Gamma, \{G_v \}_{v \in V})$ as $W_\Gamma$ for the case where the set $\{G_v \}_{v \in V}$ only consists of primary or infinite cyclic groups. The next lemma shows that the above replacement of $W(\Gamma, \{G_v\}_{v \in V})$ by  $W(\Gamma, \{G_v^\prime \}_{v \in V^\prime})$ does not create new direct truncated subgroups that are free products of primary or infinite cyclic groups.

\begin{lemma}\label{direct trunc subgroup}
Let $\Gamma=(V,E)$ be a graph and let $W(\Gamma, \{G_v\}_{v \in V})$ be a graph product of finitely generated abelian groups. Let $\Gamma^\prime = (V^\prime , E^\prime)$ and $\{G_v^\prime\}_{v \in V^\prime}$ where each $G_v^\prime$ is primary or infinite cyclic be the graph expanding $W(\Gamma, \{G_v\}_{v \in V})$ such that $W(\Gamma^\prime, \{G_v^\prime\}_{v \in V^\prime} \cong W(\Gamma, \{G_v\}_{v \in V})$. Assume that for some $k \geq 2$  there is a direct truncated subgroup in $W(\Gamma^\prime, \{G_v^\prime\}_{v \in V^\prime})$ of the form $H=G_{v_1} \ast \dots \ast G_{v_k}$, then $H$ is a direct truncated subgroup of $W(\Gamma, \{G_v\}_{v \in V})$ generated by the same vertex set.
\end{lemma}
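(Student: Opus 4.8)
The plan is to translate the statement entirely into the combinatorics of the passage from $\Gamma$ to $\Gamma'$. Recall that $\Gamma'$ is built by replacing each $v \in V$ with the complete graph on the set $K_v$ of primary/infinite-cyclic factors of $G_v$ and, for each edge $\{v,w\} \in E$, adding all edges between $K_v$ and $K_w$; thus $V' = \bigsqcup_{v \in V} K_v$, and for $x \in K_v$, $y \in K_w$ with $x \neq y$ one has $\{x,y\} \in E'$ if and only if $v = w$ or $\{v,w\} \in E$. First I would record two dictionary entries. (i) For $U \subsetneq V'$, the truncated subgroup spanned by $U$ is a \emph{direct} truncated subgroup of $W(\Gamma',\{G_v'\})$ if and only if every vertex of $U$ is $E'$-adjacent to every vertex of $V' \setminus U$: the two truncated subgroups always generate $W(\Gamma',\{G_v'\})$ (the subgroup generated by a family of vertex groups is the truncated subgroup on the spanned subgraph), they intersect trivially by the normal form, and they centralise each other precisely when each pair $G_x', G_y'$ with $x \in U$, $y \in V' \setminus U$ commutes, which happens iff $\{x,y\} \in E'$ (if $x \neq y$ and $\{x,y\} \notin E'$ then $\langle G_x', G_y'\rangle = G_x' \ast G_y'$, in which nontrivial elements of distinct factors do not commute). (ii) The truncated subgroup on $\{v_1,\dots,v_k\}$ equals the free product $G_{v_1} \ast \cdots \ast G_{v_k}$ exactly when the $v_i$ are pairwise non-$E'$-adjacent (otherwise two of the vertex groups would commute inside it).

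Given the hypothesis, I would then argue in two steps. \textbf{Step 1: the $v_i$ lie in $k$ distinct blocks $K_{w_1},\dots,K_{w_k}$.} If $v_i, v_j$ ($i \neq j$) lay in the same block $K_w$, then $\{v_i,v_j\} \in E'$ since $K_w$ spans a complete subgraph, contradicting (ii). \textbf{Step 2: each $G_{w_i}$ is primary or infinite cyclic, i.e.\ $K_{w_i} = \{v_i\}$.} Suppose some $K_{w_j}$ contained a vertex $u \neq v_j$. By Step 1, $u \notin \{v_1,\dots,v_k\}$, so $u \in V' \setminus \{v_1,\dots,v_k\}$; pick an index $i \neq j$ (possible since $k \geq 2$). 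By the hypothesis and entry (i), $\{v_i, u\} \in E'$; but $v_i \in K_{w_i}$, $u \in K_{w_j}$ and $w_i \neq w_j$, so the edge rule forces $\{w_i, w_j\} \in E$, whence \emph{every} vertex of $K_{w_i}$ is $E'$-adjacent to every vertex of $K_{w_j}$, in particular $\{v_i,v_j\} \in E'$ — contradicting (ii). Hence $G_{w_i} \cong G_{v_i}$, and $\{v_1,\dots,v_k\}$ may be regarded as a subset of $V$.

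Finally I would transport the join structure back to $\Gamma$. From $\{v_i,v_j\} \notin E'$ for $i \neq j$ and the edge rule, $\{w_i,w_j\} \notin E$, so by (ii) applied in $W(\Gamma,\{G_v\})$ the truncated subgroup spanned by $\{v_1,\dots,v_k\} = \{w_1,\dots,w_k\}$ is $G_{w_1} \ast \cdots \ast G_{w_k} \cong H$. For a given $w \in V \setminus \{w_1,\dots,w_k\}$ choose any $u \in K_w$; then $u \notin \{v_1,\dots,v_k\}$ (blocks are disjoint), so $\{v_i,u\} \in E'$ by (i), and since $v_i \in K_{w_i}$, $u \in K_w$, $w \neq w_i$, the edge rule gives $\{w_i,w\} \in E$; thus every $w_i$ is $E$-adjacent to every vertex outside $\{w_1,\dots,w_k\}$, and by entry (i) in $W(\Gamma,\{G_v\})$, $H$ is a direct truncated subgroup spanned by the same vertex set. (The complement $V \setminus \{w_1,\dots,w_k\}$ is non-empty: $H$ being a \emph{direct} truncated subgroup of $W(\Gamma',\{G_v'\})$ forces $V' \setminus \{v_1,\dots,v_k\} \neq \emptyset$, and by Step 2 this set equals $\bigsqcup_{w \notin \{w_1,\dots,w_k\}} K_w$.)

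I do not expect a deep obstacle here; the work is bookkeeping with the expansion. The two points requiring care are the combinatorial characterisation of "direct truncated subgroup" in entry (i) — which rests on the standard retraction property of graph products and their normal form — and Step 2, which is the only place the hypothesis $k \geq 2$ enters and is precisely the mechanism ensuring that expanding $\Gamma$ creates no new free-product direct truncated subgroups.
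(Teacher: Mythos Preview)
Your proposal is correct and follows essentially the same approach as the paper: both arguments show that an extra vertex $u$ in some block $K_{w_j}$ would, via the direct-truncated hypothesis, be adjacent to the other $v_i$'s, and then the ``same block $\Rightarrow$ same star'' / edge-rule property of the expansion forces an edge between two of the $v_i$'s, contradicting the free-product structure. Your write-up is in fact more explicit than the paper's --- you set up the combinatorial dictionary (i)--(ii) and carry out the final transport of the join structure back to $\Gamma$ in detail, whereas the paper leaves that last step to the reader.
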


\begin{proof}
For $v_1 \in V^\prime$ there is a corresponding vertex $w_1 \in V$ such that $v_1$ originates from $w_1$ when replacing the finitely generated abelian vertex group $G_{w_1}$ by its primary and infinite cyclic factors. Assume there was another generator $w$ inside the vertex group $G_{w_1}$ yielding another vertex $w$ in the graph $\Gamma^\prime$. Then  since $v_1$ and $w$ originate from the same vertex group they commute in $W(\Gamma^\prime, \{G_v^\prime\}_{v \in V^\prime}$ as well and so $w \notin\{v_1, \dots, v_k\}$. Then $w$ belongs to the truncated subgroup spanned by $V^\prime - \{v_1, \dots, v_k \}$. Since $H$ is a direct truncated subgroup, this means that $w$ commutes with all $v_i$. However, since $v_1$ and $w$ originate from the same vertex group, they also satisfy $st(w)=st(v_1)$. This is a contradiction since $v_2 \in st(w)$ but $v_2 \notin st(v_1)$. Therefore, the vertex group $G_{w_1}$ is the same as $G_{v_1}$ to begin with. Similarly, this holds for all $v_i$ for $i \geq 2$ and $\{ w_1, \dots, w_k \}$ span $H$ as a direct truncated subgroup in $W(\Gamma, \{G_v\}_{v \in V})$.   
\end{proof}

The question of when a graph product decomposes as a direct product of truncated subgroups has been solved in \cite{Green} for graph products with general vertex groups. We only give a version restricted to our case of finitely generated abelian vertex groups here, where we assume that the vertex set of the graph in consideration has cardinality at least two since truncated subgroups have non-trivial underlying vertex set by definition.

\begin{T}[{\cite[Thm. 3.34]{Green}}]\label{Green1}
Let $\Gamma=(V,E)$ be a graph with $|V| \geq 2$ and let $\{G_v \}_{v \in V}$ be a collection of finitely generated abelian groups. Then $W(\Gamma, \{G_v \}_{v \in V})$ has non-trivial center if and only if  $W(\Gamma, \{G_v \}_{v \in V})$ is a direct product of truncated subgroups at least one of which has non-trivial center. 
\end{T}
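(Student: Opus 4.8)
My plan is to reduce the theorem to a single structural fact about graph products and to concentrate the real work there. The fact I have in mind is: \emph{if no vertex of a graph $\Delta$ is adjacent to all of the others, then the graph product of any family of nontrivial groups over $\Delta$ has trivial center.} (Equivalently, the center of an arbitrary graph product is the product of the centers of the vertex groups located at the vertices that are adjacent to everything; this is part of the standard center computation going back to Green's thesis \cite{Green}.) Throughout write $W = W(\Gamma,\{G_v\}_{v\in V})$.

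Granting that fact, both implications are short. For the "if" direction, suppose $W = A\times B$ with $A = W(\Gamma_{V'},\{G_v\}_{v\in V'})$ a direct truncated subgroup and $Z(A)\ne 1$; then $Z(W) = Z(A)\times Z(B)$ contains $Z(A)\times\{1\}\ne\{1\}$, with no appeal to the structural fact at all. For the "only if" direction, suppose $Z(W)\ne 1$. By the contrapositive of the structural fact, $\Gamma$ has a vertex $v_0$ with $st(v_0)=V$. Since $v_0$ is adjacent to every other vertex, $G_{v_0}$ commutes with every vertex group, so $W$ decomposes as the internal direct product $W = G_{v_0}\times W(\Gamma_{V\setminus\{v_0\}},\{G_v\}_{v\ne v_0})$, and $V\setminus\{v_0\}$ is non-empty because $|V|\ge 2$. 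Thus the truncated subgroup $G_{v_0}$ spanned by $\{v_0\}$ is a direct truncated subgroup, and being nontrivial and abelian it equals its own, nontrivial, center. Hence $W$ is a direct product of two truncated subgroups, one of which has nontrivial center, as claimed.

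The remaining, and genuinely substantial, step is the structural fact, and this is the part I expect to be the main obstacle. To prove it I would work with the normal form calculus for graph products (the syllable-shuffle normal form from \cite{Green}, under which every element has an essentially unique reduced expression and in particular a well-defined support). Given $1\ne g$ in the graph product over $\Delta$, one picks a vertex $v$ in the support of a cyclically reduced conjugate of $g$ and, using that $v$ is not adjacent to every other vertex, a vertex $w$ non-adjacent to $v$; one then checks that a nontrivial element $h\in G_w$ fails to commute with $g$, since in any reduced form the $v$-syllable of $g$ can be neither cancelled by nor shuffled past the inserted $w$-syllable, forcing their relative order and making the reduced forms of $gh$ and $hg$ differ. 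The careful tracking of syllables here is the only technical point; everything else reduces to the elementary direct-product bookkeeping above.
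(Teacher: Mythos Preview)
The paper does not give its own proof of this theorem; it is quoted verbatim from Green's thesis as \cite[Thm.~3.34]{Green} and used as a black box. So there is no in-paper argument to compare against in the strict sense.

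That said, your proposal is correct, and its relation to the surrounding text is worth noting. The paper's logical flow is Theorem~\ref{Green1} $\Rightarrow$ Corollary~\ref{Green2}: it takes the decomposition statement as given and then iterates it to conclude that a nontrivial center forces a vertex $v$ with $st(v)=V$. You run the implication the other way: you first establish directly (via the normal-form/syllable calculus) the structural fact that ``no vertex with full star implies trivial center''---which is exactly the contrapositive of the hard direction of Corollary~\ref{Green2}---and then read off the decomposition of Theorem~\ref{Green1} from the existence of such a vertex. Both orderings are sound; yours has the merit of being self-contained rather than deferring to \cite{Green}, while the paper's ordering is natural once Green's center computation is in hand. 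One small wording point in your sketch: after passing to a cyclically reduced conjugate $g'=xgx^{-1}$, the element $h\in G_w$ you build fails to commute with $g'$, not literally with $g$; but then $x^{-1}hx$ fails to commute with $g$, so the conclusion that $g$ is non-central is unaffected.
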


Inductively applying this result immediately yields the following description of graph products of finitely generated abelian groups. 

\begin{cor}\label{Green2}
Let $\Gamma$ be a graph and $\{G_v \}_{v \in V}$ be a collection of finitely generated abelian groups. Then $W(\Gamma, \{G_v \}_{v \in V})$ has non-trivial center if and only if   there exists a vertex $v \in V$ such that $st(v)=V$. 
\end{cor}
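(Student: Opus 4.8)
The plan is to prove the two implications separately: the backward direction is essentially immediate, while the forward direction goes by induction on $|V|$ with Theorem \ref{Green1} supplying the inductive step.

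For the backward implication, suppose $v \in V$ satisfies $st(v) = V$. If $|V| = 1$ then $W(\Gamma, \{G_w\}_{w \in V})$ is the non-trivial abelian group $G_v$, which has non-trivial center. If $|V| \geq 2$, then $v$ is joined by an edge to every other vertex, so $G_v$ commutes with $G_w$ for all $w \neq v$; hence $W(\Gamma, \{G_w\}_{w \in V}) = G_v \times W(\Gamma_{V \setminus \{v\}}, \{G_w\}_{w \in V \setminus \{v\}})$ and the non-trivial subgroup $G_v$ lies in the center. (Equivalently, this realises $W(\Gamma, \{G_w\}_{w \in V})$ as a direct product of truncated subgroups one of which, namely $G_v$, has non-trivial center, so Theorem \ref{Green1} applies directly.)

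For the forward implication I would induct on $|V|$, with the statement understood to range over all graph products of finitely generated abelian groups. The base case $|V| = 1$ is immediate, since then $W(\Gamma, \{G_w\}_{w \in V}) = G_v$ is non-trivial abelian and $st(v) = \{v\} = V$. For the inductive step, let $|V| \geq 2$ and assume $W(\Gamma, \{G_w\}_{w \in V})$ has non-trivial center. By Theorem \ref{Green1}, $W(\Gamma, \{G_w\}_{w \in V})$ is a direct product of truncated subgroups at least one of which has non-trivial center; grouping the factors, we may write $W(\Gamma, \{G_w\}_{w \in V}) = W(\Gamma_{V_1}, \{G_w\}_{w \in V_1}) \times W(\Gamma_{V_2}, \{G_w\}_{w \in V_2})$ for a partition $V = V_1 \sqcup V_2$ into non-empty subsets (so every vertex of $V_1$ is adjacent to every vertex of $V_2$), with $W(\Gamma_{V_1}, \{G_w\}_{w \in V_1})$ having non-trivial center. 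Since $|V_1| < |V|$, the induction hypothesis applied to $W(\Gamma_{V_1}, \{G_w\}_{w \in V_1})$ yields a vertex $v \in V_1$ whose star in $\Gamma_{V_1}$ is all of $V_1$, i.e. $v$ is adjacent in $\Gamma$ to every other vertex of $V_1$. As $v \in V_1$ is also adjacent to every vertex of $V_2$, it follows that $v$ is adjacent to every vertex of $V \setminus \{v\}$, that is $st(v) = V$, completing the induction.

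The only point requiring a little care — and the main, if modest, obstacle — is unpacking the phrase ``direct product of truncated subgroups'' in Theorem \ref{Green1} as a genuine two-factor decomposition indexed by a partition of the vertex set, together with the bookkeeping that the smaller factor inheriting the non-trivial center is itself a graph product of finitely generated abelian groups (so that the induction hypothesis genuinely applies to it). Once the direct-product decomposition is in hand, the remaining argument is purely combinatorial.
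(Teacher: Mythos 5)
Your proposal is correct and follows essentially the same route as the paper: the backward direction is the same observation that a vertex with $st(v)=V$ is central, and your induction on $|V|$ via Theorem \ref{Green1} is just a formalisation of the paper's iteration of that theorem down to a single-vertex direct factor. The only difference is bookkeeping (tracking adjacency across the two-factor split rather than extracting a single-vertex direct factor of the whole group), which does not change the argument.
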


\begin{proof}
Clearly, any $v \in V$ such that $st(v)=V$ belongs to the center of $W(\Gamma, \{G_v \}_{v \in V})$. Conversely, if the center of $W(\Gamma, \{G_v \}_{v \in V})$ is non-trivial, then it decomposes into a direct product of truncated subgroups at least one of which has a non-trivial center. Since truncated subgroups are graph products themselves , we can iterated this procedure until we end up with a subgroup over a single vertex $v \in V$ that is a direct factor of  $W(\Gamma, \{G_v \}_{v \in V})$. This implies $st(v)=V$. 
\end{proof}

\subsection{Automorphisms of graph products}

There are the following four families of automorphisms for graph products $W_\Gamma$ for which each vertex group is primary or infinite cyclic (see  \cite[p.1--2]{Gutierrez} combined with {\cite[Prop. 5.5]{Gutierrez}}). From now onwards we will abuse notation to identify each vertex of $\Gamma$ with a chosen generator of the cyclic group $G_v$. Therefore, vertices can be considered as elements in $W_\Gamma$.  
\begin{enumerate}
\item Every isomorphism of graphs $\gamma \colon \Gamma \to \Gamma$ such that $G_v=G_{\gamma(v)}$ for all $v \in V$ induces an automorphism of $W_\Gamma$. Such automorphisms are called \textit{labelled graph automorphisms}. 
\item Let $v \in V$ and $m \in \Z$ such that $\gcd(m, |G_v|)=1$. If $|G_v|=\infty$, then $m= \pm 1$. Then we set the \textit{factor automorphism} $\phi_{v,m}$ to be the unique automorphism of $W_\Gamma$ defined for $z \in V$ by
\begin{align*}
\phi_{v,m}(z)=
\begin{cases}
 v^m & z=v, \\
 z & z \neq v.
\end{cases}
\end{align*}
\item Let $v,w \in V$ be two distinct vertices. A \textit{dominated transvection} is an automorphism $\tau_{v,w}$ defined having one of the two forms: 
\begin{enumerate}
\item $|G_v|= \infty$, $v \leq w$ and 
\begin{align*}
\tau_{v,w}(z)=
\begin{cases}
vw & z=v, \\
z & z \neq v,
\end{cases}
\end{align*}
\item $|G_v|=p^k$, $|G_w|=p^\ell$, $v \leq_s w$ and 
\begin{align*}
\tau_{v,w}(z)=
\begin{cases}
vw^q & z=v, \\
z & z \neq v,
\end{cases}
\end{align*}
where $q = \max \{ 1, p^{\ell -k} \}$ and $p$ is prime. 
\end{enumerate}
\item Let $v \in V$ and let $K$ be the vertex set of a connected component of $\Gamma - st(v)$. Define the \textit{partial conjugation} by $v$ on $K$ by
\begin{align*}
\sigma_{K,v}(z)=
\begin{cases}
vzv^{-1} & z \in K, \\
z & z \notin K.
\end{cases}
\end{align*}  
\end{enumerate}

\begin{T}[{\cite[p.2]{Gutierrez}}]
Let $G=W_\Gamma$ be a graph product where each vertex group is primary or infinite cyclic. Then $\Aut(G)$ is generated by the above four families of automorphisms.
\end{T}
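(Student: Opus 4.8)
\emph{Proof idea.} This statement generalises the Laurence--Servatius description of $\Aut$ of a right angled Artin group and its right angled Coxeter analogue, and I would prove it by a peak-reduction argument in the style of Whitehead's algorithm. Identify each vertex $v$ with the chosen generator of $G_v$, so that $V$ generates $W_\Gamma$, and write $|g|$ for the associated syllable length. For $\varphi \in \Aut(W_\Gamma)$ set $\|\varphi\| = \sum_{v \in V} |\varphi(v)|$, and let $\Phi \leq \Aut(W_\Gamma)$ be the subgroup generated by the four families of automorphisms; the aim is $\Phi = \Aut(W_\Gamma)$. Since $\varphi$ is injective, $|\varphi(v)| \geq 1$ for all $v$, so $\|\varphi\| \geq |V|$, with equality precisely when each $\varphi(v)$ is a single letter. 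One argues by descending induction on $\|\varphi\|$.

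\emph{Base case.} Suppose $\|\varphi\| = |V|$; then $\varphi(v) = \rho(v)^{m_v}$ for some vertex $\rho(v)$ and exponent $m_v$. The $\varphi(v)$ all lie in the truncated subgroup generated by $\rho(V) \subseteq V$, so since they generate $W_\Gamma$ we get $\rho(V) = V$, i.e. $\rho$ is a bijection of $V$; surjectivity of $\varphi$ then forces $\langle \rho(v)^{m_v} \rangle = G_{\rho(v)}$ for every $v$, by Green's normal form theorem \cite{Green}. As an isomorphism, $\varphi$ preserves commutation and element orders, so $\rho$ is a graph automorphism with $G_v \cong G_{\rho(v)}$. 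Hence $\varphi$ is the labelled graph automorphism induced by $\rho$ followed by a product of factor automorphisms, and $\varphi \in \Phi$.

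\emph{Inductive step.} The heart of the proof is the peak-reduction lemma: if $\|\varphi\| > |V|$, then some $\alpha$ belonging to one of the four families satisfies $\|\varphi \circ \alpha\| < \|\varphi\|$; where plain length fails to decrease strictly --- as happens when images are conjugated rather than shortened --- one instead forces a secondary quantity, such as the number of vertices whose image is not cyclically reduced, to drop. Granting this, the induction reaches the base case and gives $\varphi \in \Phi$ for every $\varphi$, hence $\Phi = \Aut(W_\Gamma)$. To prove the lemma one writes each $\varphi(v)$ in reduced form and studies the cancellations possible in the $\varphi(v)$ and in the relators $\varphi([v,w])$; using Green's normal form together with the standard computation of centralisers in a graph product, $C_{W_\Gamma}(G_v) = W_{\Gamma_{st(v)}}$, and Corollary \ref{Green2} to locate the central vertices, one identifies the obstruction to shortening and shows it is removed by an appropriate Whitehead-type automorphism, and that every such automorphism of $W_\Gamma$ is a product of dominated transvections and partial conjugations. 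This last point is exactly where the precise definitions enter: whether the relevant transvection uses $v \leq w$ or $v \leq_s w$, and the normalising exponent $q = \max\{1, p^{\ell - k}\}$, are both dictated by the requirement that the corresponding map on generators respect the relations of $W_\Gamma$, so that the reducing automorphism really lies in $\Phi$ and not just in $\Aut(W_\Gamma)$.

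\emph{Main obstacle.} The genuinely hard step is the peak-reduction lemma. It is a long case analysis of cancellation in reduced words of a graph product, complicated by the simultaneous presence of finite and infinite vertex groups (forcing one to track both preorders $\leq$ and $\leq_s$), by the central vertices (which create transvections invisible in $\Gamma$ alone), and by the bookkeeping needed to certify that the shortening automorphism produced actually lies in $\Phi$. The base case, the descent, and the verification that the four families consist of automorphisms are all routine by comparison.
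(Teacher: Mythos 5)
This statement is not proved in the paper at all: it is quoted verbatim from Corredor--Gutierrez \cite{Gutierrez}, where it is the main theorem (itself an adaptation of Laurence's proof of Servatius' conjecture for right angled Artin groups), so there is no internal argument to compare yours against, and within this paper a citation is all that is required.

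Judged as a standalone proof, your proposal has a genuine gap: the entire content of the theorem is concentrated in the ``peak-reduction lemma,'' which you state and then defer (``a long case analysis'') without proving. The base case and the descent are indeed routine, but the assertion that at a peak one can always find an automorphism from the four families strictly decreasing $\|\varphi\|$ (or a designated secondary complexity) is precisely what is hard: for right angled Artin groups alone, the generation statement is Laurence's theorem and a genuine peak-reduction theorem is a separate substantial result of Day, requiring a carefully chosen class of Whitehead automorphisms and a secondary complexity whose well-foundedness must itself be established; the mixed setting of primary and infinite cyclic vertex groups adds the interaction of the two preorders $\leq$ and $\leq_s$ and the exponent $q=\max\{1,p^{\ell-k}\}$, which you mention but do not handle. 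Likewise, your claim that ``every such Whitehead-type automorphism is a product of dominated transvections and partial conjugations'' is asserted, not shown, and is exactly the point where an argument could silently produce automorphisms outside the four families. So the sketch is a plausible strategy, broadly in the spirit of the Laurence-style argument used in \cite{Gutierrez}, but as written it does not constitute a proof; if you intend to use this theorem in the present context, the correct move is to cite \cite{Gutierrez}, as the paper does.
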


\begin{defi} \label{defaut0}
Let $G=W_\Gamma$. Then the subgroup of $\Aut(G)$ generated by automorphisms of type 2--4 above is denoted by $\Aut^0(G)$. 
\end{defi}

\begin{lemma}\cite[Prop 2.1]{Marcinkowski} \label{MarciLemma}
$\Aut^0(W_\Gamma)$ is a finite index subgroup of $\Aut(W_\Gamma)$. A set of coset representatives is given by labelled graph automorphisms.
\end{lemma}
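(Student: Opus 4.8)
The plan is to realise $\Aut(W_\Gamma)$ as a product $\mathrm{LGA}\cdot\Aut^0(W_\Gamma)$, where $\mathrm{LGA}$ denotes the set of labelled graph automorphisms, after first checking that $\mathrm{LGA}$ is a finite subgroup and that $\Aut^0(W_\Gamma)$ is normal. Finiteness of $\mathrm{LGA}$ is immediate: sending a label‑preserving graph automorphism $\gamma$ of $\Gamma$ (one with $G_v=G_{\gamma(v)}$ for all $v$) to the automorphism of $W_\Gamma$ it induces is an injective homomorphism whose source is a subgroup of the finite symmetric group on $V$, so $\mathrm{LGA}\leq\Aut(W_\Gamma)$ has order at most $|V|!$. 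By the structure theorem of \cite{Gutierrez} recalled above, $\Aut(W_\Gamma)$ is generated by $\mathrm{LGA}$ together with the type 2--4 automorphisms, which by Definition \ref{defaut0} generate $\Aut^0(W_\Gamma)$; hence, once $\Aut^0(W_\Gamma)\trianglelefteq\Aut(W_\Gamma)$ is known, it will follow that $\Aut(W_\Gamma)=\mathrm{LGA}\cdot\Aut^0(W_\Gamma)$, so the index is at most $|\mathrm{LGA}|$ and every coset is represented by a labelled graph automorphism.

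For normality, note that the normaliser of $\Aut^0(W_\Gamma)$ in $\Aut(W_\Gamma)$ is a subgroup containing $\Aut^0(W_\Gamma)$ itself, so it suffices to show it also contains $\mathrm{LGA}$. A direct computation on the generators $z\in V$ of $W_\Gamma$ gives, for $\gamma\in\mathrm{LGA}$, the formulas $\gamma\,\phi_{v,m}\,\gamma^{-1}=\phi_{\gamma(v),m}$, $\gamma\,\tau_{v,w}\,\gamma^{-1}=\tau_{\gamma(v),\gamma(w)}$ and $\gamma\,\sigma_{K,v}\,\gamma^{-1}=\sigma_{\gamma(K),\gamma(v)}$; one then checks that each right‑hand side is again an automorphism of the listed type. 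Indeed $\gamma$ preserves vertex‑group orders (as $G_v=G_{\gamma(v)}$), so the coprimality condition for $\phi$ and the exponent $q$ for $\tau$ are unchanged; $\gamma$ commutes with $lk(\cdot)$ and $st(\cdot)$, so $v\leq w\iff\gamma(v)\leq\gamma(w)$ and $v\leq_s w\iff\gamma(v)\leq_s\gamma(w)$; and $\gamma$ restricts to a graph isomorphism $\Gamma-st(v)\to\Gamma-st(\gamma(v))$, carrying connected components to connected components, so $\sigma_{\gamma(K),\gamma(v)}$ is well defined. Thus conjugation by $\gamma$ permutes the generating set of $\Aut^0(W_\Gamma)$, so $\gamma$ normalises $\Aut^0(W_\Gamma)$, and normality follows.

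Given normality, write an arbitrary $\theta\in\Aut(W_\Gamma)$ as a word in the generators $\mathrm{LGA}\cup\{\phi,\tau,\sigma\}$ and repeatedly apply $\alpha\gamma=\gamma\,(\gamma^{-1}\alpha\gamma)$ with $\gamma\in\mathrm{LGA}$ and $\alpha$ a type 2--4 generator, so that $\gamma^{-1}\alpha\gamma\in\Aut^0(W_\Gamma)$; this moves all labelled graph automorphisms to the left of the word, their product is a single $\gamma_0\in\mathrm{LGA}$ (as $\mathrm{LGA}$ is a group), and what remains lies in $\Aut^0(W_\Gamma)$. Hence $\theta\in\mathrm{LGA}\cdot\Aut^0(W_\Gamma)$. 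Since $\Aut^0(W_\Gamma)$ is normal, left and right cosets coincide, so every right coset contains an element of $\mathrm{LGA}$; choosing one such per coset yields a set of coset representatives consisting of labelled graph automorphisms, and in particular $[\Aut(W_\Gamma):\Aut^0(W_\Gamma)]\leq|\mathrm{LGA}|<\infty$.

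I expect the routine but unavoidable part to be the case analysis behind the three conjugation formulas — in particular that both forms of a dominated transvection are treated — together with the check that the right‑hand sides satisfy the defining type conditions. A genuinely deeper point, needed only if one wants the map $\mathrm{LGA}\to\Aut(W_\Gamma)/\Aut^0(W_\Gamma)$ to be a bijection (so that $\mathrm{LGA}$ itself, rather than a proper subset, is a transversal), is the relation $\mathrm{LGA}\cap\Aut^0(W_\Gamma)=\{1\}$; this is visible neither on the abelianisation $W_\Gamma^{\mathrm{ab}}$ (where transvections already realise coordinate transpositions) nor from the generating set alone, and is part of the semidirect‑product description of $\Aut(W_\Gamma)$ in \cite{Gutierrez}. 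For the averaging argument of Lemma \ref{finindexinvariant} only the finite‑index statement and the existence of a transversal inside $\mathrm{LGA}$, both established above, are required.
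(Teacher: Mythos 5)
The paper offers no proof of this lemma at all -- it is quoted verbatim from \cite[Prop.\ 2.1]{Marcinkowski} -- so there is nothing in the text to compare against; judged on its own, your argument is correct and is the standard one underlying the cited result. The three conjugation formulas $\gamma\phi_{v,m}\gamma^{-1}=\phi_{\gamma(v),m}$, $\gamma\tau_{v,w}\gamma^{-1}=\tau_{\gamma(v),\gamma(w)}$, $\gamma\sigma_{K,v}\gamma^{-1}=\sigma_{\gamma(K),\gamma(v)}$ do hold, and your verification that the right-hand sides are again legitimate generators (equal vertex-group orders, $\gamma(st(v))=st(\gamma(v))$, components mapped to components) is exactly what is needed; combined with the generation theorem of \cite{Gutierrez} and finiteness of the group of labelled graph automorphisms this gives $\Aut(W_\Gamma)=\mathrm{LGA}\cdot\Aut^0(W_\Gamma)$, hence finite index with representatives chosen among labelled graph automorphisms, which is the statement as used later (e.g.\ in Lemma \ref{finindexinvariant}). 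One remark on your closing paragraph: you were right not to rely on $\mathrm{LGA}\cap\Aut^0(W_\Gamma)=\{1\}$, but note that this stronger statement can actually \emph{fail}, so it should not be attributed to a semidirect-product decomposition in general. For $\Gamma$ two vertices with no edge and infinite cyclic vertex groups, $W_\Gamma=F_2$; the partial conjugations are precisely the inner automorphisms by the two generators, so $\Inn(F_2)\leq\Aut^0(F_2)$, and the images of transvections and inversions generate $\GL_2(\Z)$, whence $\Aut^0(F_2)=\Aut(F_2)$ and the swap automorphism lies in $\Aut^0$. So the lemma must be read, as you do read it, as saying that a transversal can be chosen inside the labelled graph automorphisms (possibly using only some of them), and that weaker form is all the averaging arguments in the paper require.
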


\subsection{Automorphisms of free products}

Let $G= G_1 \ast \dots \ast G_k$ be a free product of freely indecomposable groups that are not necessarily finitely generated and abelian. Then there are the following classes of automorphisms of $G$ that can be thought of in the above framework where all connected components are just single points. 
\begin{enumerate}
\item The labelled graph automorphisms are generated by the so called \textit{swap automorphisms} that swap two isomorphic free factors.
\item Applying the automorphism of any factor to that factor defines an automorphism of the free product that is called a \textit{factor automorphism}.
\item Multiplying the generator $v$ of an infinite cyclic factor by any element $w$ belonging to another factor always defines a \textit{transvection} that is an automorphism. There are no transvections for factors on factors that are not infinite cyclic. 
\item \textit{Partial conjugations} are defined in the same way as for graph products where each vertex is its own connected component.
\end{enumerate}

In the case of free products of primary and infinite cyclic groups this gives the same description of the automorphisms group as in the previous section with the only exception being the more efficient description of labelled graph automorphisms by their generating set of swap automorphisms. It was established in \cite{Fouxe1} and \cite{Fouxe2} that these classes generate the automorphism group of a free product.

\begin{T}[Fouxe-Rabinovitch] \label{Fouxe}
Let $G= G_1 \ast \dots \ast G_k$ be a free product of freely indecomposable groups. Then $\Aut(G)$ is generated by the classes of swap automorphisms, factor automorphisms, transvections and partial conjugations.
\end{T}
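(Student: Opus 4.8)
The plan is to deduce this from Bass--Serre theory together with a Whitehead-style peak-reduction argument; the statement is classical and above it is simply quoted from Fouxe-Rabinovitch, so what follows is the shape of a proof rather than a new argument. A free product decomposition $G = G_1 \ast \dots \ast G_k$ into freely indecomposable factors corresponds to a minimal action of $G$ on a tree $T$ with trivial edge stabilisers whose non-trivial vertex stabilisers are precisely the conjugates of the $G_i$. By the Kurosh subgroup theorem such a decomposition is essentially unique: the number of factors, their isomorphism types, and --- for the factors that are not infinite cyclic --- their conjugacy classes are invariants of $G$, whereas the infinite cyclic factors are pinned down only up to their number (this is exactly the freedom that transvections will exploit).

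First I would use this to control how an arbitrary $\phi \in \Aut(G)$ moves the factors: precomposing the action of $G$ on $T$ with $\phi$ yields another tree of the same type, so $\phi$ permutes the conjugacy classes of the non-cyclic factors while respecting isomorphism type, and composing $\phi$ with a suitable product of swap automorphisms arranges that each of these classes is fixed. Next I would record two routine facts: inner automorphisms lie in the subgroup generated by factor automorphisms and partial conjugations (conjugation by a reduced word is assembled syllable by syllable, each syllable contributing an inner automorphism of one factor and a partial conjugation of the others), and factor automorphisms allow one to postcompose so as to replace $\phi|_{G_i}$ by the identity on each non-cyclic factor $G_i$ in turn. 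After these reductions one is left with the task of showing that an automorphism which is ``supported away from'' the fixed non-cyclic factors --- effectively an automorphism describing how the infinite cyclic generators sit relative to the fixed part --- is a product of transvections and further partial conjugations.

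The hard part is precisely this last reduction: showing that no automorphism escapes the subgroup generated by the four families, which is the actual content of the Fouxe--Rabinovitch theorem. I would prove it by peak reduction in the spirit of Whitehead's method, adapted from free groups to free products (as carried out by Collins and Zieschang): assign to $\phi$ a complexity relative to a fixed generating set, verify that the four families supply enough elementary Whitehead moves to decrease this complexity whenever it is not minimal, and establish the peak-reduction lemma that one never needs to pass through automorphisms of strictly larger complexity. A more geometric alternative is to exhibit a connected --- indeed contractible --- complex of free splittings of $G$ on which $\Aut(G)$ acts and along whose edges the listed automorphisms realise all the elementary moves (the McCullough--Miller complex, or an outer space for free products in the sense of Guirardel and Levitt), so that connectivity of the complex forces these automorphisms to generate. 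Either way, everything preceding this step is bookkeeping with Bass--Serre theory and the Kurosh theorem, and the difficulty is concentrated entirely in the peak-reduction, equivalently connectivity, statement.
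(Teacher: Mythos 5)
The paper gives no proof of this statement at all: it is quoted as the classical theorem of Fouxe--Rabinovitch, with \cite{Fouxe1} and \cite{Fouxe2} cited as the source, so there is no in-paper argument to compare yours against. Your outline is the standard modern route to the theorem (essentially Collins--Zieschang peak reduction, or alternatively connectivity/contractibility of the McCullough--Miller complex or a Guirardel--Levitt outer space for free products), and the preliminary bookkeeping you describe is correct: uniqueness of the decomposition into freely indecomposable factors up to permutation, conjugacy of the non-cyclic factors and the rank of the free part; absorbing the induced permutation of factor classes by swap automorphisms; the observation that inner automorphisms lie in the subgroup generated by factor automorphisms and partial conjugations (conjugation by a single letter $v\in G_i$ is the product of the partial conjugations of the other factors by $v$ with the inner factor automorphism of $G_i$, and one composes over the syllables of a reduced word); and normalising $\phi$ to fix each non-cyclic factor pointwise using factor automorphisms.

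That said, as a proof your proposal is incomplete in exactly the place you flag yourself: the entire content of the theorem is the assertion that nothing escapes the subgroup generated by the four families, and in your write-up this is carried by a peak-reduction lemma (or a connectivity statement for a splitting complex) that is named but not established. Your initial reductions do not make that step easier --- the peak-reduction machinery works with the full set of Whitehead-type automorphisms directly, not with an automorphism pre-normalised to fix the non-cyclic factors, and the phrase ``supported away from the fixed factors'' has no precise meaning that the argument could use. So what you have is an accurate map of how the theorem is proved in the literature, with the load-bearing step outsourced; since the paper itself treats the result as a citation, this is a reasonable account, but it should not be mistaken for a self-contained proof.
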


\begin{defi}
Denote by $\Aut^0(G)$ again the subgroup of  a free product of freely indecomposable groups $G= G_1 \ast \dots \ast G_k$ generated by automorphisms of type 2--4. This agrees with Definition \ref{defaut0} if all free factors are primary or infinite cyclic groups. 
\end{defi}

In a free product $G= G_1 \ast \dots \ast G_k$ any two choices of swap automorphisms interchanging two factors $G_i$ and $G_j$ differ by a product of factor automorphisms of these factors. Since the number of permutations of a finite set is finite one immediately concludes the following lemma.

\begin{lemma} \label{BarciLemma}
Let $G= G_1 \ast \dots \ast G_k$ be a free product of freely indecomposable groups. Then $\Aut^0(G) $ has finite index in $\Aut(G)$ with a system of coset representatives given by a choice of permutations of the isomorphic free factors of $G$.
\end{lemma}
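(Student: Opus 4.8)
The plan is to derive the lemma from the Fouxe-Rabinovitch generating set of Theorem~\ref{Fouxe} together with the observation recorded immediately above the statement.

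First I would verify that $\Aut^0(G)$ is normal in $\Aut(G)$. By Theorem~\ref{Fouxe}, $\Aut(G)$ is generated by the swap automorphisms together with the three families --- factor automorphisms, transvections and partial conjugations --- that by definition generate $\Aut^0(G)$. Hence it suffices to check that conjugating a generator of each of these three families by a swap automorphism again produces an automorphism of the same family. This is immediate from the explicit descriptions: if $s$ swaps the isomorphic factors $G_i$ and $G_j$ via an isomorphism $\alpha$, then $s$ conjugates the factor automorphism induced by $\beta \in \Aut(G_i)$ to the one induced by $\alpha\beta\alpha^{-1} \in \Aut(G_j)$, it conjugates the transvection multiplying the generator of an infinite cyclic factor $G_l$ by $w \in G_m$ to the transvection associated to the indices $s(l), s(m)$ with the element $w$ transported by $\alpha$ where necessary, and it conjugates partial conjugations to partial conjugations. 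Since these families together with the swaps generate $\Aut(G)$, it follows that $\Aut^0(G) \trianglelefteq \Aut(G)$.

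Next, set $Q = \Aut(G)/\Aut^0(G)$ and let $\Sigma$ denote the group of all permutations of $\{1,\dots,k\}$ preserving the isomorphism type of the factors, i.e.\ those $\sigma$ with $G_{\sigma(i)} \cong G_i$ for all $i$; this is a finite group. By the previous paragraph $Q$ is generated by the images of the swap automorphisms, and a swap of $G_i$ and $G_j$ maps to an element $\bar s_{ij} \in Q$ realising the transposition $(i\,j) \in \Sigma$. I would then check that the $\bar s_{ij}$ satisfy in $Q$ the Coxeter relations of $\Sigma$: that $\bar s_{ij}^2 = 1$ because the square of a swap is a product of factor automorphisms (this is exactly the observation quoted above); that $\bar s_{ij}$ and $\bar s_{kl}$ commute when the pairs are disjoint, because two swaps with disjoint supports already commute in $\Aut(G)$; and that the braid relation holds whenever three of the factors are mutually isomorphic, because the two threefold products $s_is_js_i$ and $s_js_is_j$ of swaps sharing exactly one index are both equal --- up to a product of factor automorphisms --- to the single swap of the remaining pair, hence agree in $Q$. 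Consequently the assignment $(i\,j) \mapsto \bar s_{ij}$ extends to a surjective homomorphism $\Sigma \twoheadrightarrow Q$, so $Q$ is finite of order dividing $|\Sigma|$; in particular $\Aut^0(G)$ has finite index. To produce the stated coset representatives, for each of the finitely many cosets of $\Aut^0(G)$ pick a permutation $\sigma \in \Sigma$ in its preimage under $\Sigma \twoheadrightarrow Q$ together with a product of swap automorphisms $s_\sigma$ realising $\sigma$; the resulting finite collection $\{s_\sigma\}$ is then a system of coset representatives consisting of permutations of the isomorphic free factors of $G$.

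The step I expect to require the most care is the verification, modulo $\Aut^0(G)$, of the braid relation (and more generally of the claim that the class in $Q$ of a product of swaps depends only on the induced permutation, not on the chosen word in swaps): everything else is either routine bookkeeping with the explicit generators of Theorem~\ref{Fouxe} or a direct consequence of the quoted observation that two swaps of a fixed pair of factors differ by a product of factor automorphisms. An alternative to the word-combinatorics would be to invoke the known structure theory of automorphism groups of free products, but the route above keeps the argument self-contained given the results already assembled in this section.
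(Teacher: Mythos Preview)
Your argument is correct and rests on the same two ingredients the paper invokes: the Fouxe--Rabinovitch generating set and the observation that two swaps of the same pair of factors differ by a product of factor automorphisms. The paper's own justification is extremely terse --- it simply records that observation, notes that there are only finitely many permutations, and declares the lemma immediate --- so your write-up in fact supplies the details (normality of $\Aut^0(G)$, well-definedness of the map from $\Sigma$) that the paper leaves implicit.

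One remark on the step you flagged as delicate: you do not actually need to verify the Coxeter/braid relations one by one. Once normality is in hand, observe directly that any product $w$ of swap automorphisms sends each $G_i$ isomorphically onto $G_{\sigma(i)}$ for the induced permutation $\sigma$; if $w$ and $w'$ induce the same $\sigma$ then $w^{-1}w'$ restricts to an automorphism of each $G_i$ and is therefore a product of factor automorphisms, hence lies in $\Aut^0(G)$. This gives the surjection $\Sigma \twoheadrightarrow Q$ in one stroke and is presumably the ``immediate'' argument the paper has in mind. Your relation-checking route reaches the same conclusion, just with more bookkeeping.
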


\section{General Case of free products}

The following theorem in \cite{ich} is a combination of Proposition \ref{freeprod no Z prop} , Proposition \ref{freeprod with Z} and a result on $F_2$ in \cite[Thm. 2]{BraMarc}. Together with an averaging procedure over the swap automorphisms this will enable us to prove Theorem \ref{T3} in this section afterwards. 

\begin{T}[Theorem 1 in \cite{ich}]  \label{T1}
Let $G= A \ast B$ be the free product of two non-trivial freely indecomposable groups $A$ and $B$. Assume $G$ is not the infinite dihedral group. Then $G$ admits infinitely many linearly independent homogeneous Aut-invariant quasimorphisms, all of which vanish on single letters. 
\end{T}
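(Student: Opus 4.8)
The plan is to prove Theorem~\ref{T1} by splitting into cases according to how many of the two factors $A$, $B$ are infinite cyclic, each case being an immediate appeal to one of the three results from which the theorem is assembled, namely Proposition~\ref{freeprod no Z prop}, Proposition~\ref{freeprod with Z}, and \cite[Thm.~2]{BraMarc}. The one observation to record at the outset is where the hypothesis ``$G$ is not the infinite dihedral group'' enters: a free product $A \ast B$ of two non-trivial freely indecomposable groups is isomorphic to $D_\infty$ if and only if $A \cong B \cong \Z/2$, so whenever we are not in that situation at least one of $A$, $B$ is not isomorphic to $\Z/2$. This is the only place the hypothesis is used, and it is used only in the case where neither factor is infinite cyclic.

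If both factors are infinite cyclic, then $G \cong F_2$ and \cite[Thm.~2]{BraMarc} directly supplies infinitely many linearly independent homogeneous Aut-invariant quasimorphisms on $F_2$; inspecting that construction shows they vanish on the two free generators, hence on all single letters of $\Z \ast \Z$. If exactly one factor is infinite cyclic, then after renaming we may assume $A \cong \Z$ and $B \ncong \Z$ with $B$ a non-trivial freely indecomposable group that is not infinite cyclic, and Proposition~\ref{freeprod with Z} applies verbatim: for every generic tuple $z$ the homogenised weighted code quasimorphism $\bar{f}^{\Z}_z$ is an unbounded Aut-invariant quasimorphism, these span an infinite-dimensional space, and each vanishes on single letters since a single letter of $A$ or of $B$ together with all its powers has a weighted $\Z$-code of length at most one, so $\theta^\Z_z$ is bounded on the cyclic subgroup it generates and the homogenisation is zero there.

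It remains to treat the case where neither $A$ nor $B$ is infinite cyclic. By the observation above, at least one of $A$, $B$ is not isomorphic to $\Z/2$. If $A \ncong B$, choose $C \in \{A, B\}$ with $C \ncong \Z/2$; then Proposition~\ref{freeprod no Z prop}(1) gives, for each generic $z$, the unbounded Aut-invariant quasimorphism $\bar{f}^C_z$, and the family of these spans an infinite-dimensional space. If $A \cong B$, then (since $G \ne D_\infty$) $A \cong B \ncong \Z/2$, and Proposition~\ref{freeprod no Z prop}(2) gives the unbounded Aut-invariant quasimorphism $\bar{f}^A_z + \bar{f}^B_z$ for each generic $z$, again with infinite-dimensional span. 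In both subcases these quasimorphisms are homogenisations of code quasimorphisms, which are bounded by $1$ on every letter, so as before their homogenisations vanish on all single letters.

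Since the three cases are exhaustive, the theorem follows. There is no genuine mathematical obstacle here: all of the work resides inside Proposition~\ref{freeprod no Z prop}, Proposition~\ref{freeprod with Z}, and \cite[Thm.~2]{BraMarc}. The only points requiring care are bookkeeping ones — verifying that $D_\infty$ is precisely the excluded configuration and that it occurs only when neither factor is infinite cyclic, and extracting uniformly from the three sources both the ``vanishes on single letters'' conclusion and an infinite linearly independent family (each source already provides an infinite-dimensional space of such quasimorphisms, so no interaction between the cases needs to be examined).
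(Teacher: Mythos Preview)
Your proposal is correct and matches the paper's approach exactly: the paper does not prove Theorem~\ref{T1} in detail but states, immediately before the theorem, that it ``is a combination of Proposition~\ref{freeprod no Z prop}, Proposition~\ref{freeprod with Z} and a result on $F_2$ in \cite[Thm.~2]{BraMarc}.'' Your case split according to the number of infinite cyclic factors is precisely this combination spelled out, and your observations about where the $D_\infty$ hypothesis enters and why the quasimorphisms vanish on single letters are the right supporting details.
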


Recall, that for a free product $G$ the group $\Aut^0(G)$ is the subgroup of its automorphism group generated by factor automorphisms, transvections and partial conjugations.

\begin{lemma}\label{Aut0-projection}
Let $k \geq 3$ and let $G=G_1 \ast G_2 \ast \dots \ast G_k$ be a free product of freely indecomposable factors such that $G_i$ is not infinite cyclic for $i \geq 3$. Then the kernel $\ker(p)$ of the projection $p \colon G \to G_1 \ast G_2$ is $Aut^0(G)$-invariant. 
\end{lemma}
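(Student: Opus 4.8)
The goal is to show that $\ker(p) \trianglelefteq G$ is preserved by every automorphism in $\Aut^0(G)$, where $p\colon G \to G_1 \ast G_2$ kills $G_3, \dots, G_k$. Since $\Aut^0(G)$ is generated by factor automorphisms, transvections and partial conjugations (by definition, via Theorem~\ref{Fouxe}), it suffices to check that each such generator $\varphi$ satisfies $\varphi(\ker(p)) \subseteq \ker(p)$; because these generators together with their inverses generate the group and $\ker p$ has finite codomain index considerations are irrelevant here, equality will follow formally once we have containment for a generating set closed under inverses (and each of the three families is closed under inversion up to the same family). The cleanest route is to observe that it is enough to produce, for each generator $\varphi \in \Aut^0(G)$, an endomorphism $\bar\varphi$ of $G_1 \ast G_2$ with $p \circ \varphi = \bar\varphi \circ p$; then $\ker p \subseteq \ker(p\circ\varphi) = \ker(\bar\varphi\circ p) \supseteq \ker p$, and applying the same to $\varphi^{-1}$ upgrades this to invariance.

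\textbf{Checking the generators.} First I would dispatch factor automorphisms: if $\varphi$ acts as an automorphism of $G_i$ and fixes the other factors, then for $i \geq 3$ we have $p\circ\varphi = p$ (take $\bar\varphi = \mathrm{id}$), while for $i \in \{1,2\}$ the map $\varphi$ descends to the corresponding factor automorphism $\bar\varphi$ of $G_1 \ast G_2$. Next, partial conjugations: a partial conjugation $\sigma$ by a letter $v \in G_j$ conjugates some sub-collection of the other factors and fixes the rest; since conjugation is an inner operation, $p \circ \sigma = c \circ p$ where $c$ is conjugation in $G_1 \ast G_2$ by $p(v)$ (which is trivial if $j \geq 3$ and the inner automorphism by $v$ if $j \in \{1,2\}$), so again $\ker p$ is preserved. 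The only genuinely delicate family is the transvections, and here the hypothesis that $G_i$ is not infinite cyclic for $i \geq 3$ does the work: a transvection multiplies the generator $v$ of an infinite cyclic factor by an element $w$ of another factor, so the infinite cyclic factor being transvected must be $G_1$ or $G_2$; say it is $G_1$, with $v \mapsto vw$ for $w$ in factor $G_\ell$. If $\ell \in \{1,2\}$ this descends directly to the analogous transvection on $G_1 \ast G_2$. If $\ell \geq 3$, then $p(w) = 1$, so $p(vw) = p(v) = v$, meaning $p\circ\varphi = p$ and we take $\bar\varphi = \mathrm{id}$.

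\textbf{The main obstacle.} The conceptual crux — and the reason the hypothesis on factors $G_i$ with $i \geq 3$ is present — is precisely the transvection case: without it one could transvect, say, the generator of an infinite cyclic $G_3$ by an element $w \in G_1$, giving $v \mapsto vw$; this does \emph{not} descend to $G_1\ast G_2$ and indeed fails to preserve $\ker p$, since $v \in \ker p$ but $p(vw) = p(w) \neq 1$ in general. So the proof is really the bookkeeping observation that, \emph{given} the hypothesis, every $\Aut^0$-generator either acts trivially after projection or is "block diagonal" with respect to the splitting $G = (G_1\ast G_2) \ast (G_3 \ast \cdots \ast G_k)$ in the sense that it descends to an endomorphism of the quotient. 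I would present this as the lemma that each generator $\varphi$ fits into a commuting square $p\circ\varphi = \bar\varphi\circ p$, verify it on the four (really three) families as above, and conclude $\varphi(\ker p) = \ker p$ for all $\varphi \in \Aut^0(G)$.
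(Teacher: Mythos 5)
Your proposal is correct and, at bottom, runs along the same lines as the paper's proof: reduce to the three generating families of $\Aut^0(G)$ and note that the hypothesis that $G_i$ is not infinite cyclic for $i \geq 3$ is only needed to rule out transvections hitting the rear factors. The paper does the bookkeeping differently: $\ker(p)$ is normally generated by the letters of $G_3, \dots, G_k$, and one checks that each generator of $\Aut^0(G)$ sends these normal generators back into $\ker(p)$ (factor automorphisms preserve them, transvections fix them, and partial conjugations send them to conjugates, which lie in the normal subgroup $\ker(p)$). You instead build, for each generator $\varphi$, a map $\bar\varphi$ of $G_1 \ast G_2$ with $p \circ \varphi = \bar\varphi \circ p$; this is interchangeable with the paper's argument and has the mild advantage of making explicit the descent that is used implicitly right afterwards in Lemma \ref{precompaut}.

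One local claim needs repair. For a partial conjugation $\sigma$ by $v \in G_1$ (say) of a single factor $G_m$, it is not true in general that $p \circ \sigma = c \circ p$ with $c$ the inner automorphism of $G_1 \ast G_2$ given by $p(v)$: if $m \geq 3$ then $p \circ \sigma = p$, while $c \circ p$ sends a nontrivial letter $z \in G_2$ to $vzv^{-1} \neq z$; and if $m = 2$ the two maps already disagree on $G_1$ unless $v$ is central there, since $\sigma$ fixes $G_1$ pointwise but $c$ does not. The correct induced map $\bar\sigma$ is the identity when $p(v)=1$ or $m \geq 3$, and the partial conjugation of $G_1 \ast G_2$ conjugating only $G_m$ by $p(v)$ when $m \in \{1,2\}$; with this replacement your commuting-square argument goes through verbatim. (Alternatively, argue as the paper does: $\sigma$ sends each letter of $G_i$, $i \geq 3$, to a conjugate of itself, which lies in the normal subgroup $\ker(p)$.) This is a misstatement rather than a gap, since the conclusion and the overall strategy are unaffected.
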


\begin{proof}
The kernel $\ker{p}$ is normally generated by letters belonging to $G_3, \dots, G_k$. Thus, $\ker(p)$ is clearly invariant under all factor automorphisms. Since $G_i$ is not infinite cyclic for $i \geq 3$, all transvections act trivially on letters of $G_i$. Moreover, since $\ker(p)$ is a normal subgroup, it is invariant under all partial conjugations. These three classes of automorphisms generate $\Aut^0(G)$ and so $\ker(p)$ is invariant under $\Aut^0(G)$.
\end{proof}

\begin{lemma} \label{precompaut}
Let $k \geq 3$ and let $G=G_1 \ast G_2 \ast \dots \ast G_k$ be a free product of freely indecomposable factors such that $G_i$ is not infinite cyclic for $i \geq 3$. Let $p \colon G \to G_1 \ast G_2$ be the projection and let $\psi \colon G_1 \ast G_2 \to \R$ be an Aut-invariant quasimorphism. Then $\psi \circ p \colon G \to  \R$ is an $Aut^0$-invariant quasimorphism on $G$. 
\end{lemma}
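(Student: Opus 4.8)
The plan is to deduce this directly from Lemma~\ref{Gen6} applied with a suitable characteristic subgroup, together with the $\Aut^0$-invariance of $\ker(p)$ already recorded in Lemma~\ref{Aut0-projection}. The key observation is that being ``$\Aut^0$-invariant'' is exactly the statement that $\ker(p)$ is a characteristic subgroup of $G$ \emph{when we replace $\Aut(G)$ by $\Aut^0(G)$ throughout}. So the argument of Lemma~\ref{Gen6} goes through verbatim if one reads it with $\Aut^0$ in place of $\Aut$: the composition of a homomorphism with a quasimorphism is a quasimorphism, and invariance under a subgroup of $\Aut(G)$ is preserved by composition with the quotient map precisely because the relevant kernel is invariant under that subgroup of automorphisms.

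First I would note that $p \colon G \to G_1 \ast G_2$ is a surjective homomorphism, so $\psi \circ p$ is a quasimorphism with $D(\psi \circ p) = D(\psi)$ (indeed, for $g, h \in G$, $|\psi(p(g)) + \psi(p(h)) - \psi(p(gh))| = |\psi(p(g)) + \psi(p(h)) - \psi(p(g)p(h))| \leq D(\psi)$). Next I would take an arbitrary $\varphi \in \Aut^0(G)$ and an arbitrary $g \in G$, and show $\psi(p(\varphi(g))) = \psi(p(g))$. By Lemma~\ref{Aut0-projection}, $\varphi$ preserves $\ker(p)$, so $\varphi$ descends to a well-defined automorphism $\bar\varphi$ of $G/\ker(p) \cong G_1 \ast G_2$ satisfying $p \circ \varphi = \bar\varphi \circ p$. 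Hence $\psi(p(\varphi(g))) = \psi(\bar\varphi(p(g))) = \psi(p(g))$, where the last equality uses that $\psi$ is $\Aut$-invariant on $G_1 \ast G_2$ and $\bar\varphi \in \Aut(G_1 \ast G_2)$. This proves $\psi \circ p$ is $\Aut^0$-invariant on $G$, as claimed.

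There is essentially no obstacle here; the only point that needs a word of care is the well-definedness of $\bar\varphi$, i.e.\ that $\varphi$ preserving $\ker(p)$ gives an induced automorphism of the quotient (one also needs $\varphi^{-1}$ to preserve $\ker(p)$, but $\Aut^0(G)$ is a group, so $\varphi^{-1} \in \Aut^0(G)$ and Lemma~\ref{Aut0-projection} applies again, giving that $\bar\varphi$ is a bijection). After that the computation is the same three-line chain of equalities as in the proof of Lemma~\ref{Gen6}, only with the invariance hypothesis weakened from all of $\Aut(G)$ to $\Aut^0(G)$ — which is exactly the strength of Lemma~\ref{Aut0-projection}. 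One could alternatively phrase the whole thing as ``apply Lemma~\ref{Gen6} to the group $G$ equipped with the automorphism group $\Aut^0(G)$, for which $\ker(p)$ is characteristic,'' but I would spell out the short argument to keep the exposition self-contained.
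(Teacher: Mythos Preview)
Your proof is correct and follows exactly the same approach as the paper, which simply says the $\Aut^0$-invariance follows from Lemma~\ref{Aut0-projection} together with the Aut-invariance of $\psi$. You have just spelled out in more detail the step where $\varphi \in \Aut^0(G)$ descends to an automorphism $\bar\varphi$ of $G_1 \ast G_2$ via $p \circ \varphi = \bar\varphi \circ p$, which the paper leaves implicit.
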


\begin{proof}
Clearly, $\psi \circ p$ is a quasimorphism. Its $\Aut^0$-invariance follows from Lemma \ref{Aut0-projection} together with the Aut-invariance of $\psi$.  
\end{proof}

\begin{proof}[Proof of Thm 1.1]
After reordering the free factors we may assume that $G_i$ is not infinite cyclic for $i \geq 3$ and that $G_1 \ncong \Z/2$. Let $\psi$ be an Aut-invariant quasimorphism on $G_1 \ast G_2$ that is homogeneous, unbounded and vanishes on single letters. The existence of such a quasimorphism follows from Theorem \ref{T1} . By Lemma \ref{precompaut} the composition $\psi \circ p$, where $p \colon G \to G_1 \ast G_2$ denotes the projection map, is $\Aut^0$-invariant on $G$. By Lemma \ref{BarciLemma} the index of $\Aut^0(G)$ in $\Aut(G)$ is finite with a system of coset representatives given by automorphisms permuting the factors. We denote this system by $\{\sigma_i\}_{i \in I}$. Therefore, by Lemma \ref{finindexinvariant} the quasimorphism $\widehat{\psi \circ p}$ defined for $g \in G$ by $\widehat{\psi \circ p}(g)= \sum_i \psi(p(\sigma_i(g)))$ is invariant under the whole automorphism group of $G$. It is homogeneous as well. Let us verify that it is unbounded.

Let $g \in G_1 \ast G_2$. For any $i \in I$ and $j \in \{1, \dots, k \}$ we have on the level of letters that $\sigma_i(G_j) = G_{\sigma_i(j)}$. So, if $\{\sigma_i(1),\sigma_i(2)\} \neq \{1,2 \}$ then $p(\sigma_i(g))$ is either trivial or just a single letter of $G_1$ or $G_2$. However, we know that $\psi$ vanishes on single letters and so $\psi(p(\sigma_i(g)))$ vanishes for all $\sigma_i$ that fail to satisfy $\{\sigma_i(1), \sigma_i(2) \} = \{1,2 \}$. So let $J \leq I$ be the subset consisting of $\sigma_j$ satisfying $\{\sigma_j(1), \sigma_j(2) \} = \{1,2 \}$. Any $\sigma_j \in J$ descends to an automorphism $\bar{\sigma_j}$ of $G_1 \ast G_2$ under $p$. However, we know that $\psi$ is invariant under all automorphisms of $G_1 \ast G_2$, so in particular under the ones of the form $\bar{\sigma_j}$. Consequently, we calculate for $g \in G_1 \ast G_2$ that
\begin{align*}
\widehat{\psi \circ p}(g)= \sum_{i \in I} \psi(p(\sigma_i(g))) = \sum_{j \in J} \psi(p(\sigma_j(g))) = \sum_{j \in J} \psi(\bar{\sigma_j}(g))) = |J| \psi(g), 
\end{align*}
where $|J|$ denotes the cardinality of the set $J$. Note that $|J| \geq 1 $ since it contains the representative of the class representing the identity.

This means that for any Aut-invariant quasimorphism $\psi$ on $G_1 \ast G_2$ that vanishes on letters the Aut-invariant quasimorphism $\widehat{\psi \circ p}$ on $G$ restricts to a linear multiple of $\psi$ on the subgroup $G_1 \ast G_2 \leq G$. Additionally, $\widehat{\psi \circ p}$ vanishes on all letters as well. Consequently, the space of homogeneous Aut-invariant quasimorphisms on $G$ is infinite dimensional since the space of homogeneous Aut-invariant quasimorphisms on $G_1 \ast G_2$ is infinite dimensional by Theorem \ref{T1}. Moreover, all of these homogeneous quasimorphisms can be chosen to vanish on letters.
\end{proof}

\section{Aut-invariant quasimorphisms on graph products}

From now onwards we will only consider graph products of finitely generated abelian groups.

\subsection{Lower cones}

\begin{defi}
Let $\Gamma=(V,E)$ be a graph and let $X \subset V$ be a subset of vertices. The map $R_X \colon W_\Gamma \to W_{\Gamma_X}$ defined by 
\begin{align*}
R_X(z)= 
\begin{cases}
z & z \in X, \\
e & z \notin X.
\end{cases}
\end{align*}
is called a \textit{standard retraction} onto the full subgraph of $\Gamma$ generated by $X$. We denote the kernel of $R_X$ by $K_X$. 
\end{defi}

\begin{lemma}[{\cite[Lemma 3.3]{Marcinkowski}}]
Let $X \subset V$. The group $K_X$ is invariant under factor automorphisms and partial conjugations. 
\end{lemma}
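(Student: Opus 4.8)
The plan is to make everything hinge on an intrinsic description of $K_X$. I claim that $K_X = \ker R_X$ is precisely the normal closure in $W_\Gamma$ of the vertex subgroups that $R_X$ kills, i.e.
\[
K_X = \langle\!\langle\, G_z \ :\ z \in V \setminus X \,\rangle\!\rangle .
\]
This is a routine consequence of the presentation of a graph product: adjoining the relations $G_z = 1$ for all $z \notin X$ to $W_\Gamma$ erases exactly those vertex groups together with every defining relation that involves only them, and the relations that survive are precisely the defining relations of $W_{\Gamma_X}$; hence $W_\Gamma / \langle\!\langle G_z : z \notin X \rangle\!\rangle \cong W_{\Gamma_X}$ with $R_X$ being the quotient map, so $\ker R_X$ is the stated normal closure. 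I would record this as Step 1.

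Step 2 is the actual content. Let $\varphi$ be a factor automorphism $\phi_{v,m}$ or a partial conjugation $\sigma_{K,v}$. Since $K_X$ is normally generated by the subgroups $G_z$ with $z \notin X$, and $R_X \circ \varphi$ is a homomorphism, it suffices to check that $\varphi(G_z) \subseteq K_X$ for each $z \in V \setminus X$: then $R_X(\varphi(G_z)) = 1$ for all such $z$, so the normal subgroup $\ker(R_X \circ \varphi)$ contains the normal closure $K_X$, which is exactly $\varphi(K_X) \subseteq K_X$. For $\varphi = \phi_{v,m}$ this is immediate, because $\phi_{v,m}$ carries each $G_z$ onto itself as a set (the exponentiation $g \mapsto g^m$ permutes $G_v$ as $\gcd(m,|G_v|)=1$), so $\phi_{v,m}(G_z) = G_z \subseteq K_X$. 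For $\varphi = \sigma_{K,v}$: if $z \notin K$ then again $\sigma_{K,v}(G_z) = G_z \subseteq K_X$; if $z \in K$ then $\sigma_{K,v}(G_z) = v\,G_z\,v^{-1}$, which lies in $K_X$ since $K_X$ is normal and already contains $G_z$.

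Step 3 upgrades the inclusion to equality by rerunning Step 2 on $\varphi^{-1}$. The inverse of a factor automorphism is again a factor automorphism, so that case is covered verbatim; the inverse of $\sigma_{K,v}$ is conjugation by $v^{-1}$ on $K$, which is not literally one of the families listed above, but the identical normality argument gives $\sigma_{K,v}^{-1}(G_z) \in \{\, G_z,\ v^{-1}G_zv \,\} \subseteq K_X$, hence $\sigma_{K,v}^{-1}(K_X) \subseteq K_X$; combining the two inclusions yields $\varphi(K_X) = K_X$.

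I do not anticipate a genuine obstacle — this is a bookkeeping lemma — but two points deserve care: making the identification of $K_X$ in Step 1 precise, since it is what makes everything else formal, and handling the cases where $\varphi$ does not fix $G_z$ on the nose, namely $z = v$ for a factor automorphism and the inverse of a partial conjugation. It is also worth noting why the statement excludes the other two families: a transvection $\tau_{v,w}$ with $v \notin X$ and $w \in X$ sends $v$ to $vw$, and $R_X(vw) = w \ne 1$, so $\tau_{v,w}(G_v) \not\subseteq K_X$, while a labelled graph automorphism can send a vertex of $V \setminus X$ into $X$; in both cases Step 2 breaks down.
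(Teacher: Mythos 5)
This lemma is quoted from Marcinkowski \cite[Lemma 3.3]{Marcinkowski} and the paper gives no proof of its own, so there is nothing in-paper to compare against; judged on its own terms, your argument is correct. The identification of $K_X$ with the normal closure of the vertex groups $G_z$, $z \in V \setminus X$, the reduction to checking $\varphi(G_z) \subseteq K_X$ on these normal generators, and the separate treatment of $\varphi^{-1}$ (noting that the inverse of $\sigma_{K,v}$ is conjugation by $v^{-1}$ on $K$, handled by the same normality argument) constitute the standard proof, and your closing remark on why transvections and labelled graph automorphisms must be excluded is a correct sanity check.
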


\begin{defi}\label{deforder}
Let $\Gamma=(V,E)$ be a graph and let $W(\Gamma, \{G_v \}_{v \in V})$ be a graph product of a family of groups $\{G_v \}_{v \in V}$ each of which is primary or infinite cyclic.
Let $\leq_\tau$ be the relation on $V$ defined for $v,w \in V$ by $v \leq_\tau w$ if and only if the dominated transvection $\tau_{v,w}$ is well-defined.   
\end{defi}

 Definition \ref{deforder} above is the central definition of this subsection. Thus, for the remainder of this subsection we always implicitly assume that $\Gamma=(V,E)$ is a graph and $W_\Gamma=W(\Gamma, \{G_v \}_{v \in V})$ is a graph product of a family of groups $\{G_v \}_{v \in V}$ each of which is primary or infinite cyclic.

\begin{lemma}[{\cite[Lemma 3.4]{Marcinkowski}}]
The relation $\leq_\tau$ is a preorder on $V$.
\end{lemma}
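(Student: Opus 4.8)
The plan is to check the two defining properties of a preorder separately, with essentially all the work going into transitivity. Reflexivity is a matter of convention: reading $\tau_{v,v}$ as the identity automorphism — which is trivially well-defined — gives $v \leq_\tau v$ for every $v \in V$; equivalently, one simply declares the diagonal of $V \times V$ to lie in $\leq_\tau$, as is standard for these transvection preorders. With that settled, it remains to show that $u \leq_\tau v$ and $v \leq_\tau w$ force $u \leq_\tau w$, and we may assume $u,v,w$ pairwise distinct, since any coincidence reduces the claim to reflexivity.

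The key observation I would isolate first is that the \emph{shape} of a dominated transvection $\tau_{x,y}$ is dictated entirely by the source vertex $x$: form (a) occurs exactly when $|G_x|=\infty$, and form (b) exactly when $|G_x|$ is finite, in which case $G_y$ is forced to be primary for the same prime. So I would split into two cases according to whether $G_u$ is infinite cyclic or finite primary, and the point is that this dichotomy propagates along the composition.

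If $|G_u|=\infty$, then $u \leq_\tau v$ is necessarily of form (a), giving $u \leq v$ (i.e.\ $lk(u) \subset st(v)$). Whichever form $v \leq_\tau w$ takes, it yields $v \leq w$: directly in form (a), and via $lk(v) \subset st(v) \subset st(w)$ in form (b), since there $v \leq_s w$. Now transitivity of the preorder $\leq$, proved in \cite[Lemma 2.1]{Charney}, gives $u \leq w$; together with $|G_u|=\infty$ this says $\tau_{u,w}$ is well-defined of form (a), so $u \leq_\tau w$. If instead $|G_u|=p^k$ is finite, then $u \leq_\tau v$ must be of form (b), forcing $|G_v|=p^\ell$ and $u \leq_s v$. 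Since $G_v$ is now finite, $v \leq_\tau w$ is likewise of form (b), and for the \emph{same} prime $p$ because $|G_v|$ is a fixed power of a single prime; hence $|G_w|=p^m$ and $v \leq_s w$. Transitivity of $\leq_s$ is immediate from transitivity of $\subset$, so $u \leq_s w$, and with $|G_u|$ and $|G_w|$ both powers of $p$ this makes $\tau_{u,w}$ well-defined of form (b), i.e.\ $u \leq_\tau w$.

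The only genuine subtlety — the step I would be most careful about — is the bookkeeping of vertex-group types across the composition: one must check that the "infinite versus finite" dichotomy on the source vertex is preserved, that form (b) at the intermediate stage forces the intermediate \emph{target} (hence next source) to be finite primary, and that the prime cannot change when both transvections are of form (b). Once that is pinned down, each of the two cases collapses to a single invocation of transitivity of $\leq$ or of $\leq_s$, with the mixed configuration (infinite source, finite intermediate vertex) handled by the elementary implication $v \leq_s w \Rightarrow v \leq w$.
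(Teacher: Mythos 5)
Your argument is correct. The paper does not prove this lemma at all — it simply cites \cite[Lemma 3.4]{Marcinkowski} — and your self-contained proof is exactly the standard one behind that citation: reflexivity by the usual convention on $\tau_{v,v}$, and transitivity via the case split on whether the source vertex group is infinite cyclic (reducing to transitivity of $\leq$, which the paper attributes to \cite[Lemma 2.1]{Charney}) or primary (reducing to transitivity of $\leq_s$, with the correct observation that the prime cannot change across the composition because $|G_v|$ is a nontrivial power of a single prime).
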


\begin{defi}
Define a relation $\sim_\tau$ on $V$ by setting $v \sim_\tau w$ if and only if $v \leq_\tau w$ and $w \leq_\tau v$ for $v,w \in V$. Since $\leq_\tau$ is a preorder, $\leq_\tau$ defines a partial order on the equivalence classes of $\sim_\tau$ in $V$. 
\end{defi}

\begin{lemma}[{\cite[p.9]{Marcinkowski}}] \label{equivalenceclassesleqtau}
Let $\Gamma=(V,E)$ be a graph $M \subset V$ be an equivalence class of $\sim_\tau$. Then $W_{\Gamma_M}$ is either finite and abelian, or free abelian, or a free group.
\end{lemma}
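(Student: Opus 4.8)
The plan is to analyse the class $M$ according to the two possible shapes $3(a)$ and $3(b)$ of a dominated transvection. First I would note that $\leq_\tau$ cannot relate an infinite-cyclic vertex to a finite one in both directions: if $|G_v|=\infty$ and $v\leq_\tau w$ then, since a transvection of type $3(b)$ requires $|G_v|$ finite, we must be in case $3(a)$; if moreover $w\leq_\tau v$ with $|G_w|$ finite we would need $|G_v|$ finite, a contradiction. Hence if $M$ contains an infinite-cyclic vertex then every vertex of $M$ is infinite cyclic, and otherwise every vertex of $M$ is primary. In the primary case $v\sim_\tau w$ forces, via $3(b)$ in both directions, that $|G_v|$ and $|G_w|$ are powers of a common prime and that $st(v)=st(w)$; as this holds for all pairs in $M$, all vertices of $M$ share one common star, and for distinct $v,w\in M$ we get $w\in st(v)\setminus\{v\}=lk(v)$, i.e. $\{v,w\}\in E$. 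Thus $\Gamma_M$ is complete and $W_{\Gamma_M}\cong\prod_{v\in M}G_v$ is finite abelian.

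It remains to treat the case in which every vertex of $M$ is infinite cyclic, so that $W_{\Gamma_M}$ is the right-angled Artin group $R_{\Gamma_M}$; since a RAAG is free abelian when its defining graph is complete and free when that graph has no edges, it suffices to prove that $\Gamma_M$ is complete or edgeless. For $v,w\in M$ with $v\sim_\tau w$ the transvection inequalities read $lk(v)\subset st(w)$ and $lk(w)\subset st(v)$, and I would extract two elementary consequences, using only that a vertex is not in its own link: if $\{v,w\}\notin E$ then $lk(v)=lk(w)$, and if $\{v,w\}\in E$ then $lk(v)\setminus\{w\}=lk(w)\setminus\{v\}$.

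The core step is the claim that every $u\in M$ is adjacent either to all other vertices of $M$ or to none; granting this, a vertex adjacent to all others and a vertex adjacent to none cannot coexist, which forces $\Gamma_M$ to be complete or edgeless and hence $W_{\Gamma_M}\cong\Z^{|M|}$ or $W_{\Gamma_M}\cong F_{|M|}$. I would prove the claim by contradiction: suppose $u,v,w\in M$ are distinct with $\{u,v\}\in E$ and $\{u,w\}\notin E$. From $\{u,w\}\notin E$ we get $lk(u)=lk(w)$, so $v\in lk(u)=lk(w)$ and therefore $\{v,w\}\in E$; then $lk(v)\setminus\{w\}=lk(w)\setminus\{v\}=lk(u)\setminus\{v\}$, while $\{u,v\}\in E$ gives $lk(u)\setminus\{v\}=lk(v)\setminus\{u\}$, so $lk(v)\setminus\{w\}=lk(v)\setminus\{u\}$ with $u\neq w$. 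This forces $u\notin lk(v)$, contradicting $\{u,v\}\in E$, and the claim follows.

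The main obstacle is essentially bookkeeping rather than a genuine difficulty: one must translate ``$\tau_{v,w}$ is well-defined'' correctly into the link/star containments for each of the two transvection types, separate cleanly the infinite-cyclic and primary alternatives, and keep the link-equalities straight through the final contradiction. Once the two cases are disentangled, each branch is short and purely combinatorial.
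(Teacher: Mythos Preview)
Your argument is correct. The paper does not actually give a proof of this lemma: it is stated with a citation to \cite[p.~9]{Marcinkowski} and then used as a black box, so there is no in-paper argument to compare against.

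For what it is worth, your route is the natural one and matches what one would expect in the cited reference: first separate the infinite-cyclic and primary alternatives (your observation that $v\sim_\tau w$ with $|G_v|=\infty$ forces $|G_w|=\infty$ is exactly the right hinge), then in the primary case use $st(v)=st(w)$ to get completeness of $\Gamma_M$, and in the RAAG case run the link-bookkeeping to force $\Gamma_M$ to be complete or edgeless. The contradiction step is clean; note that the equality $lk(v)\setminus\{w\}=lk(v)\setminus\{u\}$ already fails because both $u$ and $w$ lie in $lk(v)$, so the phrasing ``this forces $u\notin lk(v)$'' is just one way to package it. Nothing is missing.
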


\begin{defi}[Lower Cone]
Let $Y$ be a set and $\leq$ a relation on $Y$. A subset $X \subset Y$ is called a \textit{lower cone} if for all $t \in X$, $s \in Y$ the relation $s \leq t$ implies that $s \in X$. 
\end{defi}

\begin{lemma}\label{lowerconeunion}
Unions and intersections of lower cones are lower cones. 
\end{lemma}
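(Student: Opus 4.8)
The plan is to unwind the definition of lower cone directly; no machinery beyond elementary set theory is needed here. Fix a set $Y$ with a relation $\leq$ and let $\{X_\alpha\}_{\alpha \in A}$ be an arbitrary family of lower cones in $Y$. I want to verify that both $\bigcup_{\alpha \in A} X_\alpha$ and $\bigcap_{\alpha \in A} X_\alpha$ satisfy the downward-closure property in the definition of a lower cone.

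First I would handle the union. Suppose $t \in \bigcup_{\alpha \in A} X_\alpha$ and $s \in Y$ with $s \leq t$. By definition of union there is some $\beta \in A$ with $t \in X_\beta$; since $X_\beta$ is a lower cone, $s \leq t$ forces $s \in X_\beta$, hence $s \in \bigcup_{\alpha \in A} X_\alpha$. Next I would handle the intersection in the same spirit: if $t \in \bigcap_{\alpha \in A} X_\alpha$ and $s \leq t$, then $t \in X_\alpha$ for every $\alpha$, so downward-closure of each $X_\alpha$ gives $s \in X_\alpha$ for every $\alpha$, whence $s \in \bigcap_{\alpha \in A} X_\alpha$.

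There is essentially no obstacle: the statement is a formal consequence of the definition, and the only point worth noting is that the argument works for arbitrary (in particular finite) index sets $A$, which is all that is used later. One could remark that the statement also holds for the empty family with the convention that an empty union is $\emptyset$ and an empty intersection is $Y$, both of which are trivially lower cones.
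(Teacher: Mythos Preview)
Your proof is correct and follows essentially the same approach as the paper's: a direct verification from the definition, handling unions by picking an index witnessing membership and intersections by applying downward-closure for every index. The only cosmetic differences are your use of an arbitrary index set and the remark about the empty family, neither of which affects the argument.
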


\begin{proof}
Let $X_i \subset Y$ be lower cones with respect to $\leq$ for $i \in I$. Let $ t \in \bigcup_{i \in I} X_i$ and $s \in Y$ be given such that $s \leq t$. Then there exists $j \in I$ such that $t \in X_j$. Since $X_j$ is a lower cone, $s \in X_j$ and consequently $s \in \bigcup_{i \in I} X_i$. Therefore, $\bigcup_{i \in I} X_i$ is a lower cone. 

Let $t \in \bigcap_{i \in I} X_i$ and $s \in Y$ be such that $s \leq t$. Then $t \in X_i$ for all $i \in I$. Since every $X_i$ is a lower cone, we have that $s \in X_i$ for all $i \in I$ and therefore $s \in \bigcap_{i \in I} X_i$. Therefore, $\bigcap_{i \in I} X_i$ is a lower cone. 
\end{proof} 

\begin{example}
The complement of any minimally chosen subset of vertices $V^\prime \subset V$ whose removal disconnects the graph $\Gamma=(V,E)$ is a lower cone with respect to $\leq_\tau$. In fact, let $C_1, \dots, C_k$ be the vertex sets of the connected components of $\Gamma_{V-V^\prime}$. Then for all $i$ each vertex $v \in C_i$ satisfies $st(v) \subset C_i \cup V^\prime$. But since $V^\prime$ is chosen minimally with respect to the property that its removal disconnects $\Gamma$, the link of each vertex $w \in V^\prime$ contains vertices from at least two distinct connected components $C_i$ and $C_j$. Thus, no vertex $v \in V^\prime$ satisfies $v \leq_\tau w$ for any $w \in V-V^\prime= C_1 \cup \dots \cup C_k$. 
\end{example}

\begin{lemma} \label{maxequivalenceclass}
Let $v \in V$ such that $st(v)=V$ and assume that $G_v$ is finite for all $v$. Then the equivalence class $[v]$ of $\sim_\tau$ is maximal with respect to $\leq_\tau$. 
\end{lemma}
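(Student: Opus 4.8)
The statement to prove is: if $v \in V$ satisfies $st(v) = V$ and all vertex groups $G_v$ are finite, then the equivalence class $[v]$ of $\sim_\tau$ is maximal with respect to $\leq_\tau$. The plan is to show that whenever $v \leq_\tau w$ for some $w \in V$, one automatically has $w \leq_\tau v$, so that $w \sim_\tau v$ and hence $[w] = [v]$; this is precisely what maximality of $[v]$ means.

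First I would unwind the definitions. Since $st(v) = V$, the vertex $v$ is adjacent to every other vertex, so $lk(v) = V \setminus \{v\}$ and in particular $st(v) \supset st(w)$ for every $w \in V$ — indeed $st(w) \subset V = st(v)$ trivially. Thus $v \leq_s w$ fails in general but $w \leq_s v$ holds for all $w$, and likewise $lk(v) = V \setminus \{v\} \supset st(w)$ would need checking: actually $st(w) \subset V$ always, and $w \in st(w)$, so $st(w) \subset V = lk(v) \cup \{v\}$; since $v \in st(w)$ iff $w \in lk(v)$, which holds, we get $st(w) \subset st(v)$, i.e. $w \leq_s v$, and also (removing $v$) we must be slightly careful, but in any case $w \leq v$ and $w \leq_s v$ for all $w \in V$. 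Now suppose $v \leq_\tau w$; by Definition \ref{deforder} this means the dominated transvection $\tau_{v,w}$ is well-defined. Because $G_v$ is finite, only the second form of a dominated transvection can apply: $|G_v| = p^k$, $|G_w| = p^\ell$ for the same prime $p$, and $v \leq_s w$. But we also have $w \leq_s v$ from the previous paragraph, and since $G_w$ is finite as well (all vertex groups are finite), it is a $q$-group for some prime $q$; the condition $v \leq_\tau w$ forces $q = p$, so $|G_w| = p^\ell$ and $|G_v| = p^k$ are powers of the same prime. Hence the second form of $\tau_{w,v}$ is also well-defined: we have $|G_w| = p^\ell$, $|G_v| = p^k$, and $w \leq_s v$. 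Therefore $w \leq_\tau v$, giving $v \sim_\tau w$.

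From this, maximality follows formally: if $[u]$ is any equivalence class with $[v] \leq_\tau [u]$, pick a representative $w \in [u]$; then $v \leq_\tau w$, so by the above $w \leq_\tau v$, whence $w \sim_\tau v$ and $[u] = [w] = [v]$. Thus no equivalence class strictly dominates $[v]$, which is the definition of $[v]$ being maximal with respect to $\leq_\tau$.

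The one subtle point — the main (minor) obstacle — is the careful verification that $st(v) = V$ really does give $w \leq_s v$ (equivalently $st(w) \subset st(v)$) for every $w$, and checking that the second form of the dominated transvection is the only one available when $G_v$ is finite, together with the observation that $v \leq_\tau w$ already pins down $|G_w|$ to be a power of the same prime $p$ as $|G_v|$. Once these bookkeeping facts about the orders and the preorders $\leq_s$ are nailed down, the argument is a direct symmetry argument and requires no further input.
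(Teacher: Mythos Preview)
Your proposal is correct and follows essentially the same approach as the paper: assume $v \leq_\tau w$, use finiteness of $G_v$ to force the second form of the dominated transvection (so $|G_v|=p^k$, $|G_w|=p^\ell$ for a common prime $p$ and $v \leq_s w$), then observe that $st(w) \subset V = st(v)$ gives $w \leq_s v$, whence $\tau_{w,v}$ is well-defined and $w \sim_\tau v$. The paper's write-up is terser (it records $st(w)=V$ from $v \leq_s w$ rather than your direct $st(w) \subset V = st(v)$), but the logic is identical.
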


\begin{proof}
Let $w \in V$ be such that $v \leq_\tau w$. Then $V=st(v)=st(w)$. Since $\tau_{v,w}$ is well-defined, $|G_v|=p^k$ and $|G_w|=p^\ell$ for a prime $p$. So $\tau_{w,v}$ is well defined as well. Thus $w \leq_\tau v$ and therefore $w \sim_\tau v$. 
\end{proof}

\begin{lemma}
Let $R_\Gamma$ be a right angled Artin group. Let $v \in V$ such that $st(v)=V$. Then the equivalence class $[v]$ of $\sim_\tau$ is maximal with respect to $\leq_\tau$.
\end{lemma}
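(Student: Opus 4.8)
The plan is to observe that in a right angled Artin group the preorder $\leq_\tau$ coincides with the preorder $\leq$, and then to exploit the hypothesis $st(v)=V$ directly. Indeed, every vertex group of $R_\Gamma$ is infinite cyclic, so a dominated transvection $\tau_{x,y}$ is only ever of type (3)(a): it is well-defined exactly when $lk(x)\subset st(y)$, i.e. when $x\leq y$. Hence $x\leq_\tau y$ if and only if $x\leq y$, and to prove that $[v]$ is maximal it suffices to show that $v\leq_\tau w$ forces $w\leq_\tau v$ for every $w\in V$.

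So I would take $w\in V$ with $v\leq_\tau w$, i.e. $lk(v)\subset st(w)$, and first dispose of the trivial case $w=v$. For $w\neq v$ the key point is that $st(v)=V$ gives $lk(v)=V\setminus\{v\}$ together with $w\in lk(v)$; the latter means $\{v,w\}\in E$, so $v\in lk(w)\subset st(w)$, and combined with $V\setminus\{v\}=lk(v)\subset st(w)$ this forces $st(w)=V$. Then $lk(w)\subset V=st(v)$ holds trivially, which says exactly $w\leq_\tau v$; hence $v\sim_\tau w$, and therefore no equivalence class lies strictly above $[v]$, i.e. $[v]$ is maximal with respect to $\leq_\tau$.

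I do not expect any real obstacle: the argument is the natural transcription of Lemma \ref{maxequivalenceclass} to the right angled Artin setting, where the symmetry between $v$ and $w$ that there came from the equality of primary orders $|G_v|=|G_w|$ is now automatic because all vertex groups are infinite cyclic and only type (3)(a) transvections exist. The only mildly delicate point is to remember that dominated transvections are defined for \emph{distinct} vertices, so the case $w=v$ should be handled separately, using reflexivity of $\sim_\tau$.
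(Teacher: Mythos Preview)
Your proof is correct and follows essentially the same approach as the paper: both arguments use that in a right angled Artin group $\leq_\tau$ reduces to $lk(x)\subset st(y)$, and then show that $v\leq_\tau w$ forces $st(w)=V$ via $lk(v)=V\setminus\{v\}\subset st(w)$ together with $v\in lk(w)$. The only minor difference is that the paper closes by observing the slightly stronger fact that $z\leq_\tau v$ holds for \emph{every} $z\in V$ (so $[v]$ is in fact the maximum, not just maximal), whereas you conclude directly from $w\sim_\tau v$; both are sufficient for the stated lemma.
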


\begin{proof}
Let $w \sim_\tau v$ for some $w \neq v$. Since $st(v)=V$, we have that $w \in lk(v)$ and so $v \in lk(w)$. Moreover, since $v \leq_\tau w$, we have $V - \{v\}= lk(v) \subset st(w)$. Thus, $st(w)=V$ for all $w \sim_\tau v$.  Trivially, $lk(z) \subset V=st(v)$ for all $z \in V$ and so any $z \in V$ satisfies $z \leq_\tau v$. 
\end{proof}

\begin{lemma}\label{directtruncatedlowercone}
Let $\Gamma=(V,E)$ be a graph in which no vertex $v \in V$ satisfies $st(v)=V$. If $W_\Gamma$ decomposes as a product of direct truncated subgroups $W_\Gamma \cong W_{\Gamma_{V_1}} \times W_{\Gamma_{V_2}}$, then both $V_1$ and $V_2$ are lower cones with respect to $\leq_\tau$. 
\end{lemma}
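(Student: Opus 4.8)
The plan is to show that if $v \in V_1$ and $w \leq_\tau v$ for some $w \in V$, then necessarily $w \in V_1$ as well, which is exactly the lower cone condition; the argument for $V_2$ is identical by symmetry. So fix $v \in V_1$ and $w \in V$ with $w \leq_\tau v$, meaning the dominated transvection $\tau_{w,v}$ is well-defined. Recall that the hypothesis of the lemma rules out the case $st(w) = V$, so by Corollary \ref{Green2} neither factor is forced to contribute to a central vertex, but more importantly the structural obstruction used elsewhere is available.

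First I would analyse what $w \leq_\tau v$ forces geometrically. If $|G_w| = \infty$, then $w \leq_\tau v$ means $lk(w) \subset st(v)$; if $|G_w| = p^k$, it means $st(w) \subset st(v)$, and in particular $lk(w) \subset st(v)$ in both cases. Now suppose for contradiction that $w \notin V_1$, hence $w \in V_2$. Since $W_\Gamma \cong W_{\Gamma_{V_1}} \times W_{\Gamma_{V_2}}$ is a direct product decomposition, every vertex of $V_1$ commutes with every vertex of $V_2$; in particular $v$ is connected by an edge to every vertex of $V_2 \setminus \{?\}$ — more precisely $V_2 \subset lk(v)$ since $v \in V_1$ and $w' \in V_2$ implies $\{v,w'\} \in E$. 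By the same token $V_1 \subset lk(w)$ since $w \in V_2$.

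The key step is then to derive a contradiction from $lk(w) \subset st(v)$ together with these inclusions. We have $V_1 \subset lk(w) \subset st(v) = lk(v) \cup \{v\}$, and we also have $V_2 \subset st(v)$ from $V_2 \subset lk(v)$, hence $st(v) = V$. This contradicts the standing hypothesis that no vertex $v \in V$ satisfies $st(v) = V$. Therefore $w \in V_1$, proving $V_1$ is a lower cone with respect to $\leq_\tau$; interchanging the roles of $V_1$ and $V_2$ gives the claim for $V_2$.

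The main obstacle I anticipate is making sure the direct product decomposition genuinely yields the edge relations $V_2 \subset lk(v)$ for $v \in V_1$ (and symmetrically) — that is, translating "$W_\Gamma$ decomposes as $W_{\Gamma_{V_1}} \times W_{\Gamma_{V_2}}$ as a product of \emph{direct truncated subgroups}" into the combinatorial statement that $\Gamma$ is the join $\Gamma_{V_1} * \Gamma_{V_2}$, i.e. all cross-edges are present. This should follow from the definition of direct truncated subgroup (the decomposition as a cartesian product of truncated subgroups forces all generators of one factor to commute with all generators of the other, hence all cross-edges lie in $E$), but it is worth spelling out carefully since the whole argument hinges on it. Once that combinatorial reformulation is in place, the contradiction with $st(v) = V$ is immediate and the rest is routine.
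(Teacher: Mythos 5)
Your proof is correct and follows essentially the same route as the paper's: translate the direct product decomposition into the join condition (all cross-edges present, so $V_2\subset lk(v)$ for $v\in V_1$ and $V_1\subset lk(w)$ for $w\in V_2$), then note that a relation $\leq_\tau$ across the two factors gives $lk$-inclusion into the star of the upper vertex, forcing its star to be all of $V$ and contradicting the hypothesis. The paper phrases the contradiction at the upper vertex in $V_2$ rather than $V_1$, but this is only a relabelling of your argument.
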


\begin{proof}
By the definition of direct truncated subgroups $V$ is the disjoint union $V=V_1 \cup V_2$. Because of the direct product decomposition $W_\Gamma \cong W_{\Gamma_{V_1}} \times W_{\Gamma_{V_2}}$ each generator $v \in V_1$ commutes with each generator $w \in V_2$ and vice versa. If there were $v \in V_1$, $w \in V_2$ such that $lk(v) \subset st(w)$, then $V_2 \subset st(w)$ and so $st(w)=V$ which is a contradiction. So there do not exist any vertices $v \in V_1$, $w \in V_2$ such that $v \leq_\tau w$ which implies that $V_2$ is a lower cone. By symmetry, $V_1$ is a lower cone as well.  
\end{proof}

\begin{defi}
Let $\Gamma=(V,E)$ be a graph and $M \subset V$ be a subset. Define 
\[
L_M = \{ v \in V \ |\ [v,w] \neq e \text{ for all } w \in M \}.
\]
By definition $M \cap L_M = \emptyset$ for all $M$. Equivalently, $L_M=\{  v \in V \ |\ st(v) \subset V-M \}$ and so $L_M= V- (\bigcup_{w \in M} st(w))$. 
\end{defi}

The next lemma is crucial for finding a lower cone in a graph such that the associated graph product of the lower cone decomposes as a non-trivial free product. 

\begin{lemma} \label{equivclassLowercone}
The set $M \cup L_M$ is a lower cone with respect to $\leq_\tau$ for every minimal equivalence class $M \subset V$ of $\sim_\tau$. Moreover, if $G_v$ is finite for all $v \in V$, then $L_M$ is itself a lower cone for any subset $M \subset V$.
\end{lemma}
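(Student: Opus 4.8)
The plan is to unwind the definition of $\leq_\tau$ together with the explicit form of dominated transvections. First I would record a basic fact: whenever $v \leq_\tau w$ one necessarily has $lk(v) \subset st(w)$. Indeed, if $\tau_{v,w}$ has the first form then $v \leq w$ by definition, i.e.\ $lk(v)\subset st(w)$; and if $\tau_{v,w}$ has the second form then $st(v) \subset st(w)$, hence $lk(v) \subset st(v) \subset st(w)$. I would also record the sharper fact that if $G_v$ is finite then only the second form of $\tau_{v,w}$ can occur, so in that case $v \leq_\tau w$ forces $st(v) \subset st(w)$. These two observations carry the whole argument.

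For the first claim, I would take $t \in M \cup L_M$ and $s \in V$ with $s \leq_\tau t$, and show $s \in M \cup L_M$, splitting on whether $t \in M$ or $t \in L_M$. If $t \in M$, then the $\sim_\tau$-class $[s]$ satisfies $[s] \leq_\tau [t] = M$, so minimality of $M$ forces $[s] = M$ and hence $s \in M$. If $t \in L_M$, I would argue by contradiction: suppose $s \notin M \cup L_M$. Since $s \notin L_M$, there is some $w \in M$ with $w \in st(s)$; since $s \notin M$ we have $s \neq w$, so $w \in lk(s)$. By the basic fact, $lk(s) \subset st(t)$, whence $w \in st(t)$. But $t \in L_M$ means exactly $st(t) \subset V - M$, contradicting $w \in M$. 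Thus $s \in M \cup L_M$, so $M \cup L_M$ is a lower cone.

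For the second claim, assuming every $G_v$ is finite and $M \subset V$ is arbitrary, I would take $t \in L_M$ and $s \leq_\tau t$. By the sharper fact (applicable since $G_s$ is finite) we get $st(s) \subset st(t) \subset V - M$, so $s \in L_M$, and $L_M$ is a lower cone.

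I expect the main—and essentially the only—slightly subtle point to be the middle case of the first claim: one must notice that not lying in $L_M$ produces a vertex of $M$ adjacent to $s$, that this adjacency is transported along $s \leq_\tau t$ into $st(t)$ via the containment $lk(s)\subset st(t)$, and that it then collides with the very definition of $t \in L_M$. Everything else is routine bookkeeping with links and stars and the induced order on $\sim_\tau$-classes.
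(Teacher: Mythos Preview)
Your proof is correct and follows essentially the same approach as the paper. The only cosmetic difference is in the case $t\in L_M$: the paper splits on whether $G_s$ is finite or infinite (using $st(s)\subset st(t)$ in the former case and $lk(s)\subset st(t)$ in the latter), whereas your contradiction argument uses only the weaker containment $lk(s)\subset st(t)$ uniformly and thereby avoids that case split---a slight streamlining, but not a different route.
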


\begin{proof}
Let $w \in V$. If $w \leq_\tau v$ for $v \in M$, then $w \in M$ by the minimality of $M$ with respect to $\leq_\tau$. Now assume that $w \leq_\tau v$ for $v \in L_M$. If $w$ has finite order, then $w \leq_s v$. Therefore, $st(w) \subset st(v) \subset V-M$ and so $w \in L_M$. Finally, if $w$ has infinite order, then we have $lk(w) \subset st(v) \subset V-M$.  So either $w \in M$ or $st(w) \subset V-M$. Thus, $w \in M \cup L_M$.   
\end{proof}

\begin{lemma}\label{equivclassesLMinRAAG}
Let $M$, $N$ be two equivalence classes such that $W_{\Gamma_N}$ is not a free group of rank $k \geq 2$. If $N \cap L_M \neq \emptyset$, then $N \subset L_M$. 
\end{lemma}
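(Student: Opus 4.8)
\emph{Plan.} The strategy is to read the conclusion straight off the definition of $\leq_\tau$ in terms of dominated transvections, together with the reformulation $L_M = \{u \in V : st(u) \subset V - M\}$ recorded just after the definition of $L_M$; no structural information about $W_{\Gamma_N}$ beyond the bare fact that $N$ is a $\sim_\tau$-class is actually needed. The first move is to observe that necessarily $M \neq N$: one always has $M \cap L_M = \emptyset$, so $M = N$ would give $N \cap L_M = \emptyset$, contradicting the hypothesis. Since $M$ and $N$ are then distinct equivalence classes of $\sim_\tau$, they are disjoint, so in particular no vertex of $N$ lies in $M$.

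Next I would fix a vertex $v \in N \cap L_M$, so that $st(v) \subset V - M$, take an arbitrary $w \in N$, and aim to show $st(w) \subset V - M$, i.e. $w \in L_M$. Because $v$ and $w$ lie in the same $\sim_\tau$-class we have $w \leq_\tau v$, which by Definition \ref{deforder} means the dominated transvection $\tau_{w,v}$ is well defined. Now split on the order of $G_w$. If $G_w$ is infinite cyclic, then $\tau_{w,v}$ must be of the first (infinite-cyclic) type in the list of generating automorphisms, since the second type requires $G_w$ finite; hence $w \leq v$, that is $lk(w) \subset st(v) \subset V - M$, and since $w \notin M$ by the previous step this yields $st(w) = lk(w) \cup \{w\} \subset V - M$. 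If instead $G_w$ is finite, then $\tau_{w,v}$ is of the second type, which requires $w \leq_s v$, that is $st(w) \subset st(v) \subset V - M$ outright. Either way $w \in L_M$, and as $w \in N$ was arbitrary, $N \subset L_M$.

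I do not expect a genuine obstacle here: the whole argument is a two-case unwinding of ``$\tau_{w,v}$ is well defined''. The only points that call for a little care are that the dichotomy ``$G_w$ infinite cyclic / $G_w$ finite'' is precisely what dictates which of the two forms of $\tau_{w,v}$ can occur, and that in the infinite-cyclic case the passage from $lk(w) \subset V - M$ to $st(w) \subset V - M$ uses $w \notin M$, which is just the disjointness of the distinct classes $M$ and $N$. In fact the hypothesis that $W_{\Gamma_N}$ is not a free group of rank $k \geq 2$ is never invoked by this argument, so the sketch above establishes the statement a fortiori; it is presumably imposed only because the free-group classes are singled out for separate treatment where the lemma is applied. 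If one did want to lean on that hypothesis, an alternative is to note that in each remaining admissible shape of $W_{\Gamma_N}$ — a complete graph on infinite-cyclic vertices, a complete graph on primary vertices of a common prime, or a single vertex — all vertices of $N$ have the same star, so membership in $L_M$ is simultaneous for every vertex of $N$ once $M \neq N$.
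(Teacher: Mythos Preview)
Your argument is correct, and in fact slightly sharper than the paper's. Both proofs hinge on the same two ingredients: for $v\in N\cap L_M$ and any $w\sim_\tau v$ one has $lk(w)\subset st(v)\subset V-M$, and one then only needs $w\notin M$ to upgrade this to $st(w)\subset V-M$. The paper obtains $w\notin M$ by invoking the hypothesis $W_{\Gamma_N}\ncong F_k$: this forces any two vertices of $N$ to be adjacent (via the trichotomy of Lemma~\ref{equivalenceclassesleqtau}), so $w\in lk(v)\subset V-M$. You instead observe directly that $M\cap L_M=\emptyset$ together with $N\cap L_M\neq\emptyset$ gives $M\neq N$, and distinct $\sim_\tau$-classes are disjoint, whence $w\in N$ already implies $w\notin M$. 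This bypasses the hypothesis entirely and shows the lemma holds without it; as you suspect, the exclusion of the free-group case is imposed only for the application in Proposition~\ref{RAAG1}, not because the lemma would fail there. Your case split on $|G_w|$ is the same as the one implicit in the paper's blanket assertion ``any $y\sim_\tau x$ satisfies $lk(y)\subset st(x)$'', just made explicit.
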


\begin{proof}
Let $a \in M$ and $x \in N \cap L_M$.  Then $x \in L_M$ implies that $ a \notin lk(x)$. Since $M \cap L_M = \emptyset$, it follows that $a \notin st(x)$. Any $y \sim_\tau x$ satisfies $lk(y) \subset st(x)$. So $a \notin lk(y)$. Since $W_{\Gamma_N} \ncong F_k$, the vertices $x$ and $y$ are connected by an edge. Therefore, $y \notin M$. So $st(y) \in V-M$. Since $a \in M$ was chosen arbitrarily and $y \sim_\tau x$ was chosen arbitrarily, it follows that $N \subset L_M$.   
\end{proof}

\begin{lemma}[{\cite[Lemma 3.5]{Marcinkowski}}] \label{lowerconeinvariant}
If $X \subset V$ is a lower cone with respect to  $\leq_\tau$, then $K_X$ is invariant under $\Aut^0(W_\Gamma)$.
\end{lemma}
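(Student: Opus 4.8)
The plan is to reduce the statement to a single generating family of $\Aut^0(W_\Gamma)$ and to exploit that $K_X$ is, by definition, the kernel of the homomorphism $R_X$. By Definition \ref{defaut0}, $\Aut^0(W_\Gamma)$ is generated by factor automorphisms, dominated transvections and partial conjugations, and by \cite[Lemma 3.3]{Marcinkowski} the subgroup $K_X$ is already invariant under all factor automorphisms and all partial conjugations. Since a subgroup satisfying $\varphi(K_X)=K_X$ for every $\varphi$ in a generating set of automorphisms is invariant under the whole subgroup they generate, it remains only to prove that $\tau_{v,w}(K_X)=K_X$ for every dominated transvection $\tau_{v,w}$, i.e. for every pair $v,w\in V$ with $v\leq_\tau w$.

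Fix such a pair. Write $w^{(q)}$ for $w$ if $|G_v|=\infty$ and for $w^q$ with $q=\max\{1,p^{\ell-k}\}$ otherwise, so that $\tau_{v,w}$ fixes every generator except $v$, which it sends to $vw^{(q)}$. Since $R_X$ is a homomorphism, $R_X(\tau_{v,w}(v))=R_X(v)\,R_X(w)^{(q)}$, and the argument splits on whether $w\in X$. If $w\notin X$ then $R_X(w)=e$, so $R_X\circ\tau_{v,w}$ and $R_X$ agree on all generators and hence as homomorphisms; in particular they have equal kernel, which gives $\tau_{v,w}^{-1}(K_X)=\ker(R_X\circ\tau_{v,w})=K_X$, and so $\tau_{v,w}(K_X)=K_X$. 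If instead $w\in X$, then because $X$ is a lower cone for $\leq_\tau$ and $v\leq_\tau w$, we also have $v\in X$.

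In this remaining case I would first observe that the inequality defining $\tau_{v,w}$ persists in the full subgraph $\Gamma_X$: since $lk_{\Gamma_X}(u)=lk_\Gamma(u)\cap X$ and $st_{\Gamma_X}(u)=st_\Gamma(u)\cap X$ for every $u\in X$, the relation $v\leq w$ (respectively $v\leq_s w$) in $\Gamma$ implies the same relation in $\Gamma_X$, while the arithmetic conditions on $|G_v|$ and $|G_w|$ are intrinsic to the groups. Hence there is a well-defined dominated transvection $\tau^X_{v,w}\in\Aut(W_{\Gamma_X})$ given by the same formula, and a routine check on generators shows $R_X\circ\tau_{v,w}=\tau^X_{v,w}\circ R_X$ (both send $v\mapsto vw^{(q)}$ and every other generator $z$ to $R_X(z)$). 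As $\tau^X_{v,w}$ is injective this yields $\ker(R_X\circ\tau_{v,w})=\ker(R_X)=K_X$, hence again $\tau_{v,w}(K_X)=K_X$. Combining both cases with the first paragraph finishes the proof.

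The only genuinely delicate point I anticipate is the stability of the preorders $\leq$ and $\leq_s$ under passage to the induced subgraph $\Gamma_X$, which is what makes $\tau^X_{v,w}$ well-defined; everything else is a direct verification that two homomorphisms agree on the generating set.
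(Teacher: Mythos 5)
Your proof is correct. Note that the paper does not prove this lemma at all (it is imported verbatim as Lemma 3.5 of Marcinkowski), so there is no in-text argument to compare against; your write-up supplies a complete and sound proof. The reduction to dominated transvections via the setwise stabilizer and the quoted invariance of $K_X$ under factor automorphisms and partial conjugations is the expected first step, and both cases are handled correctly: if $w\notin X$ then $R_X\circ\tau_{v,w}=R_X$ on generators, while if $w\in X$ the lower cone hypothesis forces $v\in X$, the induced transvection $\tau^X_{v,w}$ on $W_{\Gamma_X}$ is well defined because links and stars inside the full subgraph $\Gamma_X$ are obtained by intersecting with $X$ (and the exponent $q$ depends only on $G_v$ and $G_w$), and the intertwining $R_X\circ\tau_{v,w}=\tau^X_{v,w}\circ R_X$ together with injectivity of $\tau^X_{v,w}$ gives equality of kernels. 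This is also exactly where the hypothesis is needed: the only dangerous configuration is $v\notin X$, $w\in X$, where $v\in K_X$ but $\tau_{v,w}(v)=vw^{q}\notin K_X$, and the lower cone condition rules it out. A marginally shorter route, closer to the source, observes that $K_X$ is normally generated by the vertex generators outside $X$; a transvection fixes all of these unless $v\notin X$, in which case the lower cone property forces $w\notin X$ too, so $\tau_{v,w}(v)=vw^{q}\in K_X$ and normality finishes the argument. Both proofs turn on the same use of the lower cone hypothesis, so the difference is cosmetic.
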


The following two lemmata follow immediately from Lemma \ref{lowerconeinvariant}.

\begin{lemma}\label{KXcharacteristic}
Let $X \subset V$ be a lower cone with respect to $\leq_\tau$ that is invariant under labelled graph automorphisms. Then $K_X$ is a characteristic subgroup of $W_\Gamma$. 
\qed
\end{lemma}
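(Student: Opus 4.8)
The plan is to combine the already-established $\Aut^0$-invariance of $K_X$ with invariance under labelled graph automorphisms, and then to use that these two families of automorphisms together exhaust $\Aut(W_\Gamma)$. Concretely, since $X$ is a lower cone with respect to $\leq_\tau$, Lemma~\ref{lowerconeinvariant} gives that $K_X$ is invariant under $\Aut^0(W_\Gamma)$. By Lemma~\ref{MarciLemma}, $\Aut^0(W_\Gamma)$ has finite index in $\Aut(W_\Gamma)$ with a system of coset representatives consisting of labelled graph automorphisms. Hence every $\varphi \in \Aut(W_\Gamma)$ can be written as a composition of an element of $\Aut^0(W_\Gamma)$ with a labelled graph automorphism, so it remains only to show that $K_X$ is invariant under every labelled graph automorphism; the order of the two factors in such a composition is irrelevant once each of them is known to preserve $K_X$.

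To handle a labelled graph automorphism $\gamma$, I would first observe that by hypothesis $\gamma(X) = X$ as subsets of $V$ and that $\gamma$ preserves the labels $G_v$, so that $\gamma$ restricts to a well-defined automorphism $\gamma'$ of the truncated subgroup $W_{\Gamma_X}$. The key point is then the intertwining identity $R_X \circ \gamma = \gamma' \circ R_X$, which is checked on the generating set $V$: for $z \in X$ both sides send $z$ to $\gamma(z) \in X$, while for $z \in V \setminus X$ one has $\gamma(z) \in V \setminus X$ (since $\gamma$ permutes $V$ and fixes $X$ setwise), so both sides send $z$ to the identity. From this identity together with the injectivity of $\gamma'$ one obtains $\gamma^{-1}(K_X) = \ker(R_X \circ \gamma) = \ker(\gamma' \circ R_X) = K_X$, i.e.\ $\gamma(K_X) = K_X$.

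Putting these together, $K_X$ is invariant under $\Aut^0(W_\Gamma)$ and under every labelled graph automorphism, hence under every $\varphi \in \Aut(W_\Gamma)$, so $K_X$ is characteristic. I do not expect a genuine obstacle here: the statement is essentially a corollary of Lemma~\ref{lowerconeinvariant}, as the text itself signals. The only point requiring a little care is verifying that $\gamma$ descends to an automorphism of $W_{\Gamma_X}$ compatible with the retraction $R_X$, and this is precisely where the hypothesis that $X$ is invariant under labelled graph automorphisms enters; without it $R_X$ and $\gamma$ need not intertwine and $K_X$ could fail to be characteristic.
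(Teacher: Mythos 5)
Your proof is correct and follows the same route the paper intends when it declares the lemma an immediate consequence of Lemma \ref{lowerconeinvariant}: $\Aut^0$-invariance from that lemma, invariance of $K_X$ under labelled graph automorphisms via $\gamma(X)=X$, and the fact that these two families together give all of $\Aut(W_\Gamma)$ by Lemma \ref{MarciLemma}. The intertwining identity $R_X\circ\gamma=\gamma'\circ R_X$ checked on generators is exactly the right way to make the "immediate" step explicit, so there is nothing to correct.
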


\begin{lemma}\label{Gen8}
Let $\psi \colon W_\Gamma \to W_\Gamma$ be a labelled graph automorphism. Then $\psi$ preserves the relation $\leq_\tau$. Consequently, $\psi$ preserves equivalence classes of $\sim_\tau$.  
\end{lemma}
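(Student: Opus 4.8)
The plan is to show directly that a labelled graph automorphism $\psi$ preserves $\leq_\tau$, and then derive that it permutes the $\sim_\tau$-equivalence classes. Recall that a labelled graph automorphism comes from a graph isomorphism $\gamma \colon \Gamma \to \Gamma$ with $G_v = G_{\gamma(v)}$ for all $v$; under our convention identifying vertices with chosen generators, $\psi$ sends the generator $v$ to (a fixed generator of) $G_{\gamma(v)}$, so on the level of vertices $\psi$ acts as the permutation $\gamma$ of $V$. The key point is that $\gamma$, being a graph isomorphism, preserves the link and star structure: $\gamma(lk(v)) = lk(\gamma(v))$ and $\gamma(st(v)) = st(\gamma(v))$ for every $v \in V$, and $\gamma$ preserves the orders of vertex groups since $G_v = G_{\gamma(v)}$.

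First I would unwind Definition \ref{deforder}: $v \leq_\tau w$ means the dominated transvection $\tau_{v,w}$ is well-defined, which by the two cases of the definition of dominated transvections means either ($|G_v| = \infty$ and $v \leq w$, i.e. $lk(v) \subset st(w)$), or ($|G_v| = p^k$, $|G_w| = p^\ell$ for a prime $p$, and $v \leq_s w$, i.e. $st(v) \subset st(w)$). Now suppose $v \leq_\tau w$; I claim $\gamma(v) \leq_\tau \gamma(w)$. In the first case, $|G_{\gamma(v)}| = |G_v| = \infty$, and applying $\gamma$ to the inclusion $lk(v) \subset st(w)$ gives $lk(\gamma(v)) = \gamma(lk(v)) \subset \gamma(st(w)) = st(\gamma(w))$, so $\tau_{\gamma(v),\gamma(w)}$ is well-defined in the first form. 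In the second case, $|G_{\gamma(v)}| = p^k$ and $|G_{\gamma(w)}| = p^\ell$, and applying $\gamma$ to $st(v) \subset st(w)$ gives $st(\gamma(v)) \subset st(\gamma(w))$, so $\tau_{\gamma(v),\gamma(w)}$ is well-defined in the second form. Hence $\gamma(v) \leq_\tau \gamma(w)$, i.e. $\psi(v) \leq_\tau \psi(w)$, which is exactly the statement that $\psi$ preserves $\leq_\tau$.

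For the consequence, suppose $v \sim_\tau w$, meaning $v \leq_\tau w$ and $w \leq_\tau v$. Applying the above twice gives $\psi(v) \leq_\tau \psi(w)$ and $\psi(w) \leq_\tau \psi(v)$, so $\psi(v) \sim_\tau \psi(w)$. Thus $\psi$ maps each $\sim_\tau$-equivalence class into a single equivalence class; since $\psi$ restricted to $V$ is the bijection $\gamma$, and the same argument applies to $\gamma^{-1}$ (which is again a labelled graph automorphism), $\psi$ in fact permutes the equivalence classes of $\sim_\tau$.

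I do not anticipate a serious obstacle here: the only mild subtlety is making the identification ``$\psi$ acts on vertices as the graph isomorphism $\gamma$'' precise under the stated abuse of notation, and checking that $\gamma$ respects the orders $|G_v|$ — but this is immediate from the defining condition $G_v = G_{\gamma(v)}$. Everything else is a direct transport of the inclusions $lk(v) \subset st(w)$ and $st(v) \subset st(w)$ along the graph isomorphism $\gamma$.
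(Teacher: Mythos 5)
Your proof is correct and follows essentially the same route as the paper: both arguments observe that a labelled graph automorphism preserves vertex group orders and stars (equivalently links), so the defining conditions for the dominated transvections, and hence $\leq_\tau$ and the $\sim_\tau$-classes, are transported along the graph isomorphism. Your version simply spells out the two cases of the transvection definition that the paper's terser proof leaves implicit.
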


\begin{proof}
Clearly, $\psi$ satisfies $|G_v|= |G_{\psi(v)}|$ for all $v \in V$. Moreover, $\psi(st(v)) \subset st(\psi(v))$ for all $v \in V$. By applying $\psi^{-1}$ it follows that $\psi(st(v)) = st(\psi(v))$ for all $v \in V$. Therefore, $\psi$ preserves $\leq_\tau$ and so $\psi$ preserves equivalence classes of $\sim_\tau$. 
\end{proof}

\begin{cor} \label{equivclassfull}
Let $M,N$ be equivalence classes of $\sim_\tau$ and let $\psi \colon W_\Gamma \to W_\Gamma$ be a labelled graph automorphism. Then $\psi(M) \cap N \neq \emptyset$ if and only if $\psi(M)=N$. 
\end{cor}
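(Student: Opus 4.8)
The plan is to combine Lemma~\ref{Gen8} with the definition of equivalence classes of $\sim_\tau$ as the maximal subsets of $V$ on which $\leq_\tau$ restricts to a ``clique'' (i.e.\ each vertex is $\leq_\tau$-related to each other in both directions). First I would record the easy direction: if $\psi(M)=N$ then trivially $\psi(M)\cap N = N \neq \emptyset$, since equivalence classes are non-empty by definition.

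For the forward direction, suppose $\psi(M)\cap N \neq \emptyset$, so there is some $v \in M$ with $\psi(v) \in N$. Since $\psi$ is a bijection of $V$, the image $\psi(M)$ is again a subset of $V$; the key point is to show $\psi(M)$ is exactly the $\sim_\tau$-equivalence class containing $\psi(v)$, which then forces $\psi(M) = N$. By Lemma~\ref{Gen8}, $\psi$ preserves $\leq_\tau$ (in both directions, applying the same statement to $\psi^{-1}$), hence preserves $\sim_\tau$; so for any $w \in M$ we have $w \sim_\tau v$, giving $\psi(w) \sim_\tau \psi(v)$, i.e.\ $\psi(M) \subseteq [\psi(v)] = N$. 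Conversely, take any $u \in N = [\psi(v)]$; since $\psi$ is surjective, $u = \psi(w)$ for a unique $w \in V$, and $\psi(w) = u \sim_\tau \psi(v)$ together with the fact that $\psi^{-1}$ also preserves $\leq_\tau$ yields $w \sim_\tau v$, hence $w \in M$ and $u = \psi(w) \in \psi(M)$. Therefore $\psi(M) = N$.

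I do not expect any genuine obstacle here — the statement is essentially a formal consequence of ``$\psi$ induces a well-defined bijection on the set of $\sim_\tau$-equivalence classes'', which is exactly what Lemma~\ref{Gen8} provides once one also invokes it for $\psi^{-1}$. The only point requiring a little care is to use invertibility of $\psi$ in both directions, so that preservation of $\leq_\tau$ upgrades to preservation of $\sim_\tau$ and the containment $\psi(M)\subseteq N$ can be reversed to an equality; listing this explicitly is the substance of the argument.
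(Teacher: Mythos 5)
Your proposal is correct and follows essentially the same route as the paper: Corollary \ref{equivclassfull} is exactly the observation that, by Lemma \ref{Gen8}, a labelled graph automorphism maps $\sim_\tau$-equivalence classes onto equivalence classes, and distinct classes are disjoint. Your extra care in invoking $\psi^{-1}$ to upgrade preservation of $\leq_\tau$ to preservation of $\sim_\tau$ is already built into the proof of Lemma \ref{Gen8} (where applying $\psi^{-1}$ yields $\psi(st(v))=st(\psi(v))$), so there is no gap.
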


\begin{lemma}
Let $X \subset V$ be a lower cone with respect to $\leq_\tau$ and $Y$ be the orbit of $X$ under the action of labelled graph automorphisms. Then $K_Y$ is a characteristic subgroup of $W_\Gamma$. 
\end{lemma}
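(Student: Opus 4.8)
The plan is to realise $Y$ as a (finite) union of lower cones, deduce that $Y$ itself is a lower cone that is invariant under all labelled graph automorphisms, and then simply invoke Lemma~\ref{KXcharacteristic}.

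First I would check that if $\gamma \colon W_\Gamma \to W_\Gamma$ is a labelled graph automorphism, then $\gamma(X)$ is again a lower cone with respect to $\leq_\tau$. By Lemma~\ref{Gen8} the permutation of $V$ induced by $\gamma$ preserves $\leq_\tau$, and since $\gamma^{-1}$ is again a labelled graph automorphism the same is true for $\gamma^{-1}$; hence $s \leq_\tau t$ if and only if $\gamma^{-1}(s) \leq_\tau \gamma^{-1}(t)$ for all $s,t \in V$. Now take $t \in \gamma(X)$ and $s \in V$ with $s \leq_\tau t$, and write $t = \gamma(x)$ with $x \in X$. Then $\gamma^{-1}(s) \leq_\tau x$, so $\gamma^{-1}(s) \in X$ because $X$ is a lower cone, and therefore $s \in \gamma(X)$.

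Next, since $\Gamma$ is a finite graph there are only finitely many labelled graph automorphisms, so $Y = \bigcup_{\gamma} \gamma(X)$ is a finite union of lower cones and is therefore itself a lower cone by Lemma~\ref{lowerconeunion}. By construction $Y$ is the orbit of $X$, hence $\gamma(Y) = Y$ for every labelled graph automorphism $\gamma$. Applying Lemma~\ref{KXcharacteristic} to the lower cone $Y$ then gives that $K_Y$ is a characteristic subgroup of $W_\Gamma$, as desired.

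There is no serious obstacle here; the only point that needs a little care is the first step, namely reading off from the statement of Lemma~\ref{Gen8} that labelled graph automorphisms act on $V$ by honest order-isomorphisms of $\leq_\tau$ rather than merely by order-preserving maps, which is what makes the image of a lower cone a lower cone.
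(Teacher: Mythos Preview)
Your proof is correct and follows exactly the same route as the paper: show that the image of a lower cone under a labelled graph automorphism is again a lower cone (via Lemma~\ref{Gen8}), take the union and apply Lemma~\ref{lowerconeunion}, then conclude with Lemma~\ref{KXcharacteristic}. You simply spell out in more detail why Lemma~\ref{Gen8} guarantees an order-isomorphism rather than merely an order-preserving map; the finiteness remark is harmless but unnecessary, since Lemma~\ref{lowerconeunion} covers arbitrary unions.
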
 

\begin{proof}
By Lemma \ref{Gen8} the image of any lower cone under a labelled graph automorphism is a lower cone. By Lemma \ref{lowerconeunion} the union of lower cones is a lower cone itself and so the result follows from Lemma \ref{KXcharacteristic}.
\end{proof}

\begin{theorem} \label{PropAutcone}
Let $X \subset V$ be a lower cone with respect to $\leq_\tau$ that is additionally invariant under all labelled graph automorphisms. If $W_{\Gamma_X}$ is a free product of $k \geq 2$ freely indecomposable groups $G_1, \dots, G_k$ such that at least one of the $G_i$ is not $\Z/2$ and at most two are infinite cyclic, then $W_\Gamma$ admits infinitely many linearly independent homogeneous Aut-invariant quasimorphisms. 
\end{theorem}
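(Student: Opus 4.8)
The plan is to realise $W_{\Gamma_X}$ as a characteristic quotient of $W_\Gamma$ and then transport quasimorphisms along the quotient map, using Theorem~\ref{T3} to supply the quasimorphisms on the free product $W_{\Gamma_X}$.

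First I would note that the standard retraction $R_X \colon W_\Gamma \to W_{\Gamma_X}$ is surjective, since it restricts to the identity on the subgroup $W_{\Gamma_X} \leq W_\Gamma$; hence by the first isomorphism theorem $W_\Gamma / K_X \cong W_{\Gamma_X}$, where $K_X = \ker(R_X)$. Next, because $X$ is a lower cone with respect to $\leq_\tau$ and is invariant under all labelled graph automorphisms, Lemma~\ref{KXcharacteristic} gives that $K_X$ is a characteristic subgroup of $W_\Gamma$. (Recall this combines Lemma~\ref{lowerconeinvariant}, which handles invariance under $\Aut^0(W_\Gamma)$, with the hypothesised invariance under labelled graph automorphisms, which by Lemma~\ref{MarciLemma} furnish a full set of coset representatives for $\Aut^0(W_\Gamma)$ in $\Aut(W_\Gamma)$.)

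Then I would apply Theorem~\ref{T3} to the free product decomposition $W_{\Gamma_X} = G_1 \ast \dots \ast G_k$: the hypotheses match verbatim --- $k \geq 2$, the $G_i$ are freely indecomposable, at most two are infinite cyclic, and there is some $G_j \ncong \Z/2$ --- so $W_{\Gamma_X}$ admits infinitely many linearly independent homogeneous Aut-invariant quasimorphisms, all vanishing on single letters. Finally, I would invoke Lemma~\ref{Gen6} with the characteristic subgroup $K_X$ and quotient map $R_X \colon W_\Gamma \to W_\Gamma/K_X \cong W_{\Gamma_X}$: each such quasimorphism $\psi$ on $W_{\Gamma_X}$ pulls back to an unbounded Aut-invariant quasimorphism $\psi \circ R_X$ on $W_\Gamma$, linear independence is preserved by surjectivity of $R_X$, and the pulled-back quasimorphisms are again homogeneous since $R_X$ is a homomorphism (so $(\psi \circ R_X)(g^n) = \psi(R_X(g)^n) = n\,(\psi \circ R_X)(g)$). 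This yields the claimed infinite-dimensional family on $W_\Gamma$.

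The proof is essentially an assembly of results already established, so I do not expect a genuinely hard step; the only points demanding care are verifying that the free-product hypothesis on $W_{\Gamma_X}$ is literally the hypothesis of Theorem~\ref{T3}, and that $R_X$ is correctly identified as the quotient by its kernel $K_X$ onto $W_{\Gamma_X}$ so that Lemma~\ref{Gen6} applies cleanly.
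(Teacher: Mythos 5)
Your argument is correct and matches the paper's proof exactly: Lemma \ref{KXcharacteristic} makes $K_X$ characteristic, Theorem \ref{T3} supplies the Aut-invariant quasimorphisms on $W_{\Gamma_X}$, and Lemma \ref{Gen6} pulls them back along the retraction $R_X$. The extra details you supply (surjectivity of $R_X$, preservation of homogeneity and linear independence) are all sound and simply make explicit what the paper leaves implicit.
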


\begin{proof}
By Lemma \ref{KXcharacteristic} the kernel of the retraction map $R_X \colon W_\Gamma \to W_{\Gamma_X}$ is characteristic. Consequently, by Lemma \ref{Gen6} any Aut-invariant quasimorphism of $W_{\Gamma_X}$ gives rise to an Aut-invariant quasimorphism on $W_\Gamma$. The result follows from Theorem \ref{T3}. 
\end{proof}

\begin{example}
Let $\Gamma$ and $\Lambda$ be the graphs pictured below. Consider the right angled Artin groups $R_\Gamma$ and $R_\Lambda$. Recall that in this case for any two vertices $a,b$ the relation $a \leq_\tau b$, which is by definition equivalent to the existence of the dominated transvection $\tau_{a,b}$, is satisfied if and only if $lk(a) \subset st(b)$. 
\begin{center}
\begin{tikzpicture}[scale=1.0] 

	\draw[fill=black] (0,0) circle(.08);
	\draw[fill=black] (2,1) circle(.08);
 	\draw[fill=black] (2,0) circle(.08);
   	\draw[fill=black] (2,-1) circle(.08);
   	\draw[fill=black] (4,0) circle(.08);
	
	\draw[very thick] (0,0) -- (4,0);
	\draw[very thick] (0,0) -- (2,1) -- (4,0);
	\draw[very thick] (0,0) -- (2,-1) -- (4,0);
	
	\node at (0,0.3) {$v_0$};
	\node at (2,1.3) {$v_1$};
	\node at (2,-0.7) {$v_2$};
	\node at (2,0.3) {$v_3$};
	\node at (4,0.3) {$v_4$};
	\node at (2,-2) {Figure 1: graph $\Gamma$};

	\draw[fill=black] (6,0) circle(.08);
	\draw[fill=black] (8,1) circle(.08);
 	\draw[fill=black] (8,0) circle(.08);
   	\draw[fill=black] (8,-1) circle(.08);
   	\draw[fill=black] (10,0) circle(.08);
   	\draw[fill=black] (12,1) circle(.08);
   	\draw[fill=black] (12,-1) circle(.08);
	
	\draw[very thick] (6,0) -- (10,0);
	\draw[very thick] (10,0) -- (12,1);
	\draw[very thick] (10,0) -- (12,-1);
	\draw[very thick] (6,0) -- (8,1) -- (10,0);
	\draw[very thick] (6,0) -- (8,-1) -- (10,0);
	
	\node at (6,0.3) {$w_0$};
	\node at (8,1.3) {$w_1$};
	\node at (8,-0.7) {$w_2$};
	\node at (8,0.3) {$w_3$};
	\node at (10,0.3) {$w_4$};
	\node at (12,1.3) {$w_5$};
	\node at (12,-0.7) {$w_6$};
	\node at (8,-2) {Figure 2: graph $\Lambda$};
	
\end{tikzpicture} 
\end{center}

In $R_\Gamma$ the sets of vertices $X=\{v_0, v_4\}$ and $Y=\{v_1,v_2,v_3\}$ both form minimal equivalence classes of $\sim_\tau$. Since the cardinality of $X$ and $Y$ differs, both are preserved by any labelled graph automorphism by Lemma \ref{equivclassfull} . Since $R_{\Gamma_X}$ is the free product of two infinite cyclic groups, Proposition \ref{PropAutcone} applies.

In $R_\Lambda$ the vertices $w_0$ and $w_4$ are not equivalent with respect to $\sim_\tau$. In fact, $R_\Lambda$ contains four equivalence classes $A=\{w_0\}$, $B=\{w_1,w_2,w_3\}$, $C=\{w_4\}$ and $D=\{w_5, w_6\}$, where $C$ is the only equivalence class that is not minimal. Let $X=A \cup D$ and $Y= A \cup C \cup D$. Then $X$ and $Y$ are both lower cones that are invariant under labelled graph automorphisms since such automorphism preserve the cardinality of the star of each vertex. However, since $R_{\Lambda_X}$ is the group of rank three, Proposition \ref{PropAutcone} does not apply to $X$. Instead, Proposition \ref{PropAutcone} applies to $Y$ because $R_{C \cup D} \cong \Z \times F_2$ is freely indecomposable since it has non-trivial center. 
\end{example}

\subsection{Aut-invariant quasimorphisms on some classes of right angled Artin groups}

\begin{theorem}\label{RAAG1}
Let $\Gamma$ be a finite graph and $R_\Gamma$ be the right angled Artin group on $\Gamma$. Assume that one of the following two conditions is satisfied
\begin{itemize}
\item There is a minimal equivalence class $M$ of $\sim_\tau$ such that $R_{\Gamma_M} \cong F_2$,
\item No equivalence class $N$ of $\sim_\tau$ satisfies $R_{\Gamma_N} \cong F_k$ for $k \geq 2$. 
\end{itemize}
Then $R_\Gamma$ is either a free abelian group or $R_\Gamma$ admits infinitely many linearly independent homogeneous Aut-invariant quasimorphisms.
\end{theorem}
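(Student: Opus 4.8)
The plan is to produce Aut-invariant quasimorphisms on $R_\Gamma$ from Theorem~\ref{T3} by retracting onto a suitable $\leq_\tau$-lower cone and then averaging over labelled graph automorphisms; this refines Proposition~\ref{PropAutcone}, which handles a cone that is already invariant. If $\Gamma$ is complete then $R_\Gamma$ is free abelian and there is nothing to prove, so assume it is not (automatic under the first hypothesis, since $R_{\Gamma_M}\cong F_2$ forces a non-edge inside $M$). The general step is this: whenever $X\subset V$ is a lower cone with $R_{\Gamma_X}\cong G_1\ast\dots\ast G_k$ a free product of $k\ge 2$ freely indecomposable groups at most two of which are infinite cyclic, Theorem~\ref{T3} applies --- no $G_i$ is $\Z/2$ since each is a right angled Artin group --- and gives an infinite-dimensional space of homogeneous Aut-invariant quasimorphisms $\psi$ on $R_{\Gamma_X}$ vanishing on single letters. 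As the kernel $K_X$ of the standard retraction $R_X\colon R_\Gamma\to R_{\Gamma_X}$ is $\Aut^0(R_\Gamma)$-invariant (Lemma~\ref{lowerconeinvariant}), every $\varphi\in\Aut^0(R_\Gamma)$ descends to an automorphism of $R_\Gamma/K_X\cong R_{\Gamma_X}$, so $\psi\circ R_X$ is $\Aut^0(R_\Gamma)$-invariant; averaging it over a set of labelled graph automorphisms representing the cosets of $\Aut^0(R_\Gamma)$ in $\Aut(R_\Gamma)$ (Lemma~\ref{MarciLemma}, Lemma~\ref{finindexinvariant}) yields an Aut-invariant quasimorphism $\widehat{\psi\circ R_X}$. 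It remains, in each case, to choose $X$ and $\psi$ so that $\widehat{\psi\circ R_X}$ is unbounded; the infinitely many linearly independent examples come from the infinitely many linearly independent $\psi$.

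Under the first hypothesis take $X=M$. Minimality makes $M$ a lower cone, and $R_{\Gamma_M}\cong\Z\ast\Z$ is a free product of exactly two infinite cyclic, freely indecomposable factors, so the mechanism applies with $\psi$ ranging over the Aut-invariant quasimorphisms on $F_2$ furnished by Theorem~\ref{T1}. A labelled graph automorphism permutes the $\sim_\tau$-classes (Lemma~\ref{Gen8}, Corollary~\ref{equivclassfull}), so for each coset representative $f_i$ and each $g\in R_{\Gamma_M}$ either $f_i(M)=M$, in which case $f_i$ restricts to an automorphism of $R_{\Gamma_M}$ and $\psi(R_M(f_i(g)))=\psi(g)$ by Aut-invariance of $\psi$, or $f_i(M)\ne M$, in which case $f_i(M)\cap M=\emptyset$ by Corollary~\ref{equivclassfull}, so $f_i(g)\in R_{\Gamma_{f_i(M)}}$ is killed by $R_M$ and $\psi(R_M(f_i(g)))=0$. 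Hence $\widehat{\psi\circ R_M}$ restricts on $R_{\Gamma_M}$ to $c\,\psi$ with $c=\#\{i:f_i(M)=M\}\ge 1$, and since the $\psi$ are unbounded and linearly independent on $F_2$ the same holds for the $\widehat{\psi\circ R_M}$ on $R_\Gamma$.

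Under the second hypothesis Lemma~\ref{equivalenceclassesleqtau} forces every $\sim_\tau$-class $N$ to have $R_{\Gamma_N}$ free abelian, i.e.\ $\Gamma_N$ a clique; in particular, for any clique class $M$ one has $\bigcup_{m\in M}st(m)=st(m_0)$ for every $m_0\in M$. After retracting onto the complement of the (clique) class of central vertices --- whose kernel is $\Aut^0$-invariant, so quasimorphisms transfer exactly as in the mechanism --- we may assume $R_\Gamma$ has trivial center, i.e.\ $st(v)\ne V$ for all $v$; this preserves the hypothesis and does not change $\leq_\tau$ on the surviving vertices. Now fix a minimal class $M$. Then $L_M=V\setminus st(m_0)\ne\emptyset$, and $X=M\cup L_M$ is a lower cone by Lemma~\ref{equivclassLowercone}, with $R_{\Gamma_X}\cong\Z^{|M|}\ast R_{\Gamma_{L_M}}$, where $R_{\Gamma_{L_M}}$ is the free product of the $R_{\Gamma_C}$ over the connected components $C$ of $\Gamma_{L_M}$. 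By Lemma~\ref{equivclassesLMinRAAG}, $L_M$ is a union of $\sim_\tau$-classes, so an isolated vertex $v$ of $\Gamma_{L_M}$ would be a singleton class with $lk(v)\subseteq V\setminus M\setminus L_M=st(m_0)\setminus M$, forcing $v\le_\tau m_0$ and hence, by minimality of $M$, $v\in M$ --- impossible. Thus $\Gamma_{L_M}$ has no isolated vertex, each $R_{\Gamma_C}$ is freely indecomposable and not infinite cyclic, and $R_{\Gamma_X}$ has at most one infinite cyclic free factor and at least two factors altogether; the mechanism applies. Unboundedness after averaging is obtained by restricting $\widehat{\psi\circ R_X}$ to the subgroup of $R_{\Gamma_X}$ generated by the two free factors singled out in the proof of Theorem~\ref{T3} (on which $\psi$ is a nonzero multiple of an unbounded quasimorphism), using Corollary~\ref{equivclassfull} to see that each $f_i$ either preserves $X$, acting by an automorphism of $R_{\Gamma_X}$, or moves the relevant classes off the support of $\psi$.

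The routine part is Case~1; the work concentrates in Case~2, namely (i) the combinatorial claim that $R_{\Gamma_X}$ has at most two infinite cyclic free factors --- this is exactly where the hypothesis "no class is a nonabelian free group" is used, through Lemma~\ref{equivclassesLMinRAAG} --- together with the reduction to a centerless graph, and (ii) checking that averaging over the labelled graph automorphisms does not introduce cancellation, which in both cases comes down to the dichotomy that a labelled graph automorphism either fixes the relevant $\sim_\tau$-class or sends it to a disjoint one.
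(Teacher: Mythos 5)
Your Case 1 is exactly the paper's argument and is fine. The gap is in Case 2, in the final unboundedness step. You retract onto the full lower cone $X=M\cup L_M$ and then claim that each coset representative $f_i$ (a labelled graph automorphism) ``either preserves $X$, acting by an automorphism of $R_{\Gamma_X}$, or moves the relevant classes off the support of $\psi$.'' This dichotomy is false in general: $X$ is a union of possibly many $\sim_\tau$-classes, and Corollary~\ref{equivclassfull} only tells you that $f_i$ permutes classes, not that it treats $X$ all-or-nothing. An $f_i$ can send some classes of $X$ into $X$ and others out of it (already for the path $v_0\text{--}v_1\text{--}v_2\text{--}v_3\text{--}v_4$ with $M=\{v_0\}$, $X=\{v_0,v_2,v_3,v_4\}$, the flip $v_j\mapsto v_{4-j}$ does this), and it can also map the two chosen free factors $A,B$ into $X$ while moving other classes of $X$ outside, so that $R_X(f_i(g))$ is a genuinely mixed word in $R_{\Gamma_X}$ rather than the identity or a single letter. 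For such terms $\Psi(R_X(f_i(g)))$ is neither $0$ nor $\Psi(g)$, so your computation of the restriction of $\widehat{\Psi\circ R_X}$ to $G_a\ast G_b$ as a positive multiple of an unbounded quasimorphism breaks down, and cancellation in the average is not excluded. This is precisely why the paper does not stop at $M\cup L_M$: it chooses a minimal class $N$ inside $L_M$ and projects onto the smaller lower cone $M\cup N$, so the target is $\Z^\ell\ast\Z^k$ with only two classes; then every labelled graph automorphism either permutes $\{M,N\}$ (hence descends to an automorphism of $R_{\Gamma_M}\ast R_{\Gamma_N}$) or sends the image under the retraction to a single letter, which is killed because the quasimorphisms from Theorem~\ref{T1} vanish on letters. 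Your argument would be repaired by making the same further reduction (your combinatorial work already shows $L_M$ is a nonempty union of classes, so a minimal class $N\subset L_M$ exists and $M\cup N$ is a lower cone).

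A secondary, smaller point: you assert that each connected component $R_{\Gamma_C}$ of $R_{\Gamma_{L_M}}$ is freely indecomposable in order to invoke Theorem~\ref{T3}. That is true for right angled Artin groups on connected graphs, but it is not established in the paper: Lemma~\ref{freelyindecomposable} covers only primary (finite) vertex groups, and its proof uses torsion, so you would need to supply a reference or proof. The paper's route through $\Z^\ell\ast\Z^k$ avoids this issue as well, since nontrivial free abelian groups are freely indecomposable for elementary reasons, and only the two-factor Theorem~\ref{T1} is needed.
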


\begin{proof}
First, assume that $M$ is a minimal equivalence class of  $\sim_\tau$ such that $R_{\Gamma_M} \cong F_2$. Then $M$ is a lower cone of $R_\Gamma$ by Lemma \ref{lowerconeinvariant}. This implies that any unbounded Aut-invariant quasimorphism on $F_2$ gives rise to an unbounded $\Aut^0$-invariant quasimorphism on $R_\Gamma$. 

Let $\sigma$ be a labelled graph automorphism of $\Gamma$ and let $\psi$ be an unbounded Aut-invariant quasimorphism on $R_{\Gamma_M}$. According to Lemma \ref{equivclassfull} either $\sigma(M)=M$, in which case $\sigma$ descends to an automorphism of $R_{\Gamma_M}$, or $\sigma(M) \cap M = \emptyset$. Since the labelled graph automorphisms $\{\sigma_i\}$ form a set of coset representative for $\Aut^0(R_{\Gamma})$ in $\Aut(R_{\Gamma})$ according to Lemma \ref{MarciLemma} , the quasimorphism $\widehat{\psi \circ p}$ defined for $g \in R_{\Gamma}$ by $\widehat{\psi \circ p}(g)= \sum_i \psi(p(\sigma_i(g)))$ is invariant under the whole automorphism group $\Aut(R_{\Gamma})$ by Lemma \ref{finindexinvariant}.

Thus, for all $g \in R_{\Gamma_M} \leq R_\Gamma$ it holds that
\begin{align*}
\widehat{\psi \circ p}(g)= \sum_i \psi(p(\sigma_i(g))) = |J| \psi(g),
\end{align*}
where $J$ is the subset of labelled graph automorphisms satisfying $\sigma_i(M)=M$. Clearly, $|J| \geq 1$. So the Aut-invariant quasmorphism $\widehat{\psi \circ p}$ restricted to the subgroup $R_{\Gamma_M} \leq R_\Gamma$ is just a linear multiple of $\psi$. Then the result follows from Theorem \ref{T1} , which in this case is due to \cite[Thm. 2]{BraMarc}. 

Second, assume that no equivalence class $N$ of $\sim_\tau$ satisfies $R_{\Gamma_N} \cong F_k$ for $k \geq 2$. Then every equivalence class $N$ satisfies $R_{\Gamma_N} \cong \Z^k$ for some $k \geq 1$ by Lemma \ref{equivalenceclassesleqtau}. Let $M$ be a minimal equivalence class. By Lemma \ref{equivclassLowercone} $M \cup L_M$ is a lower cone. If $L_M = \emptyset$, then $M$ commutes with all other elements. By Lemma \ref{maxequivalenceclass} $M$ is maximal and so $V-M$ is a lower cone. Proceed with choosing a minimal equivalence class in $V-M$ and iterate the construction. This process will either yield a nontrivial $N$ such that $L_N$ is non-trivial or implies that $R_\Gamma$ is a free abelian group itself. So we may now assume that $L_M$ is non-trivial and we consider the lower cone $M \cup L_M$. 

According to Lemma \ref{equivclassesLMinRAAG} any equivalence class $N$ of $\sim_\tau$ with $N \cap L_M \neq \emptyset$ is fully contained in $L_M$. Let $N$ be a minimal equivalence class in $L_M$. Then $M \cup N$ is a lower cone. Consequently, $p \colon R_\Gamma \to R_{\Gamma_M} \ast R_{\Gamma_{N}}$ is an $\Aut^0$-equivariant projection. Moreover, $R_{\Gamma_M} \ast R_{\Gamma_{N}} \cong \Z^\ell \ast \Z^k$ for some $k, \ell \geq1$. Let $\psi \colon R_{\Gamma_M} \ast R_{\Gamma_{N}} \to \Z$ be an Aut-invariant quasimorphism. Let $\sigma$ be labelled graph automorphism of $\Gamma$. According to Lemma \ref{equivclassfull} either $\sigma(M)=M$, in which case $\sigma$ descends to an automorphism of $R_{\Gamma_M}$, or $\sigma(M) \cap M = \emptyset$. Since the labelled graph automorphisms $\{\sigma_i\}$ form a set of coset representative for $\Aut^0(R_{\Gamma})$ in $\Aut(R_{\Gamma})$ according to Lemma \ref{MarciLemma} , the quasimorphism $\widehat{\psi \circ p}$ defined for $g \in R_{\Gamma}$ by $\widehat{\psi \circ p}(g)= \sum_i \psi(p(\sigma_i(g)))$ is invariant under the whole automorphism group $\Aut(R_{\Gamma})$ by Lemma \ref{finindexinvariant}.

Thus for any Aut-invariant quasimorphism $\psi$ on $R_{\Gamma_M} \ast R_{\Gamma_{N}}$ we have for $g \in R_{\Gamma_M} \ast R_{\Gamma_{N}} \leq R_\Gamma$ that
\begin{align*}
\widehat{\psi \circ p}(g)= \sum_i \psi(p(\sigma_i(g))) = |J| \psi(g),
\end{align*}
where $J$ is the subset of labelled graph automorphisms satisfying $\{ \sigma(M), \sigma(N)\}=\{M,N\}$. Clearly, $|J| \geq 1$. So restricted to the subgroup $R_{\Gamma_M} \ast R_{\Gamma_{N}} \leq R_\Gamma$ the Aut-invariant quasmorphism is just a linear multiple of $\psi$. Then the statement follows from Theorem \ref{T1}. 
\end{proof}

\begin{proof}[Proof of Thm. \ref{RAAG2}]
Since any equivalence class $M$ of $R_\Gamma$ consisting of at least two vertices admits non-trivial transvections by definition, every equivalence class of $\sim_\tau$ consists of a single vertex. Since there are two vertices that do not commute, $R_\Gamma$ cannot be free abelian. Thus, the statement follows from Proposition \ref{RAAG1}.
\end{proof}

\subsection{Freely indecomposable graph products}

\begin{lemma}\label{finiteorder}
Let $A \ast B$ be a free product of two non-trivial groups. If $x \in A \ast B$ has finite order then $x$ is a conjugate of a letter belonging to $A$ or $B$. 
\end{lemma}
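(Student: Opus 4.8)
The plan is to use the normal form theory for free products together with the structure of finite-order elements. First I would write $x \in A \ast B$ in reduced form $x = x_1 x_2 \cdots x_n$ where the $x_i$ are letters alternating between $A$ and $B$. If $n \leq 1$ then $x$ is already a letter (or the identity) and there is nothing to prove, so assume $n \geq 2$. The key dichotomy is whether $x$ is \emph{cyclically reduced}, i.e. whether $x_1$ and $x_n$ belong to the same factor.

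If $x$ is cyclically reduced with $n \geq 2$, I claim $x$ has infinite order, so this case cannot occur. Indeed, if $x_1$ and $x_n$ lie in different factors, then for every $k \geq 1$ the word $x^k = x_1 \cdots x_n x_1 \cdots x_n \cdots x_1 \cdots x_n$ is already reduced (no cancellation or amalgamation occurs at the junctions since $x_n$ and $x_1$ are in different factors), hence $x^k$ has syllable length $kn \geq 2$ and in particular $x^k \neq e$. If instead $x_1$ and $x_n$ lie in the same factor but $x_1 x_n \neq e$ in that factor, then $x^k$ has a reduced form of syllable length $kn - (k-1) \geq n \geq 2$, again nonzero. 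If $x_1$ and $x_n$ lie in the same factor with $x_1 x_n = e$, then $x$ is not cyclically reduced after all; but this is the reducible case, handled next.

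If $x$ is not cyclically reduced, then $x_1$ and $x_n$ lie in the same factor, and writing $g = x_1$ one has $x = g x' g^{-1}$ with $x' = x_2 \cdots x_{n-1} x_n'$ a shorter reduced word where $x_n' = x_1 x_n$ (possibly causing a further amalgamation or, if $x_n' = e$, a further shortening). Since conjugation preserves order, $x$ has finite order if and only if $x'$ does, and $x'$ is strictly shorter. Induction on syllable length then reduces us to the cyclically reduced case, where by the previous paragraph the only finite-order possibility has $n \leq 1$, i.e. $x'$ is conjugate to a letter. Chasing the conjugating elements back through the induction shows $x$ itself is conjugate to a letter of $A$ or $B$, and a letter has finite order precisely when it does so in its own factor; this gives the statement.

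The main obstacle is purely bookkeeping: carefully tracking the amalgamation at the junction when $x_1$ and $x_n$ lie in the same factor, so that the syllable-length count in the cyclically reduced power $x^k$ is correct and the induction genuinely decreases length. This is the standard "conjugate into cyclically reduced form" argument for free products, and no new ideas beyond the uniqueness of reduced words (already recalled in the preliminaries) are needed.
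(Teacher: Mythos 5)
Your proposal is correct and follows essentially the same route as the paper: the paper takes a minimal-length counterexample, notes that a reduced word whose first and last letters lie in different factors has infinite order, and otherwise conjugates by the first letter to obtain a strictly shorter conjugate, contradicting minimality — which is exactly your induction on syllable length via cyclic reduction. Only minor bookkeeping differs (your junction letter should be $x_n x_1$ rather than $x_1 x_n$, and your parenthetical gloss of ``cyclically reduced'' is stated backwards), but the substance matches the paper's argument.
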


\begin{proof}
Let $x \in A \ast B$ be an element that has finite order and is not conjugate to a letter and such that the reduced form $w=c_1 \dots c_k$ of $x$ has minimal length $k$ among all elements of finite order in $A \ast B$ that are not conjugate to a letter. If $c_1$ and $c_k$ were letters from different factors, the order of $x$ would be infinite. So $c_1 $ and $c_k$ belong to the same factor. Let $c$ be the letter representing the product of $c_k$ with $c_1$ in that factor. Then the reduced form of $c_1^{-1} x c_1$ is given by $ c_2 \dots c_{k-1} a$, which has length $k-1$ contradicting the minimality of $x$. Consequently, the result follows.
\end{proof}

\begin{lemma}\label{freelyindecomposable}
Let $\Gamma$ be a connected graph of primary groups. Then $W_\Gamma$ is freely indecomposable. 
\end{lemma}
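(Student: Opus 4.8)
The plan is to argue by contradiction using the structure of elements of finite order in free products, exactly the obstruction Lemma \ref{finiteorder} was placed to handle. Suppose $W_\Gamma \cong P \ast Q$ with $P,Q$ non-trivial. Every vertex generator $v$ has finite order (since $G_v$ is primary, hence finite), so by Lemma \ref{finiteorder} each $v$, viewed as an element of $P \ast Q$, is conjugate into $P$ or into $Q$. The key point I would exploit is that whenever two vertices $v,w$ are joined by an edge in $\Gamma$, the elements $v$ and $w$ commute in $W_\Gamma$, and commuting elements of finite order in a free product $P \ast Q$ must (after simultaneous conjugation) lie in a common factor: a conjugate of a letter of $P$ and a conjugate of a letter of $Q$ generate a subgroup isomorphic to a free product of the two cyclic subgroups they generate, which is non-abelian, so they cannot commute unless both can be conjugated into the same factor. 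I would make this precise as a small standalone claim about free products.

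Then I would use connectedness of $\Gamma$ to propagate: define a "colour" on the vertices by recording, for each $v$, which factor ($P$ or $Q$) contains a conjugate of $v$ — after a bookkeeping step one checks this is well-defined for vertices that genuinely sit inside a factor, and the commuting argument above forces adjacent vertices to admit a common conjugating element and a common factor. Since $\Gamma$ is connected, a single conjugating element $g \in W_\Gamma$ can be chosen (walking along a spanning tree, updating the conjugator edge by edge) so that $g^{-1} v g \in P$ for \emph{all} $v \in V$, or $g^{-1} v g \in Q$ for all $v \in V$. But the vertex generators generate $W_\Gamma$, so this says $g^{-1} W_\Gamma g \subseteq P$, i.e. $W_\Gamma$ is contained in a conjugate of a proper free factor, which is impossible for a non-trivial free product $P \ast Q$. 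This contradiction shows $W_\Gamma$ is freely indecomposable. (One should also note $W_\Gamma$ is non-trivial, as $V \neq \emptyset$ and each $G_v$ is non-trivial, so the definition of free indecomposability is met.)

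The main obstacle, and the step I would spend the most care on, is the propagation of the conjugator along $\Gamma$: showing that the local statement "adjacent vertices are jointly conjugate into one factor" globalises to "all vertices are simultaneously conjugate into one factor." The subtlety is that the conjugating element making $v$ land in $P$ is only well-defined up to the normaliser of the relevant factor, so one must check that the choices along a spanning tree of $\Gamma$ can be made consistently; here I would invoke the standard fact that in $P \ast Q$ the normaliser of $P$ is $P$ itself (for $P,Q \neq 1$), which pins down the conjugators tightly enough. A cleaner alternative, which I would present if it shortens the write-up, is to avoid tracking conjugators altogether: pass to the retraction / abelianisation or to the Bass–Serre tree of the splitting $P \ast Q$, observe that each finite-order $v$ fixes a point of the tree, that commuting vertices fix a common point (their fixed sets are subtrees that must intersect), and that connectedness of $\Gamma$ then forces all of $W_\Gamma$ to fix a common point of the tree — contradicting that $P \ast Q$ acts on its Bass–Serre tree with trivial edge stabilisers and no global fixed point. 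Either route reaches the same contradiction; the Bass–Serre formulation is likely the more robust one to write down.
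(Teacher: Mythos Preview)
Your proposal is correct and follows the same overall arc as the paper's proof: vertex generators have finite order, hence by Lemma~\ref{finiteorder} each is conjugate into one free factor; adjacent (commuting) vertices must land in the same factor; connectedness of $\Gamma$ propagates this. The execution differs in two places. For the step ``adjacent $\Rightarrow$ same factor'', the paper carries out an explicit normal-form computation showing $[ag_1a^{-1}, bg_2b^{-1}] \neq 1$ for nontrivial $g_1 \in G_1$, $g_2 \in G_2$, rather than invoking a structural fact about subgroups of free products or Bass--Serre theory. More significantly, the paper sidesteps what you call your ``main obstacle'' entirely: it never tracks a global conjugator. Once every vertex is individually conjugate into $G_1$ (by possibly different conjugators), all generators lie in the normal closure of $G_1$, which is exactly the kernel of the projection $G_1 \ast G_2 \to G_2$; hence $G_2$ is trivial. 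This is shorter than your spanning-tree propagation. Your Bass--Serre alternative also works and is quite clean once you observe the key point you left implicit: since edge stabilisers in the Bass--Serre tree of a free product are trivial, each nontrivial elliptic element fixes a \emph{single} vertex, so commuting elliptics fix the \emph{same} vertex, and connectedness of $\Gamma$ immediately gives a common fixed vertex for all generators without any Helly-type argument.
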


\begin{proof}
Assume $W_\Gamma \cong G_1 \ast G_2$. By Lemma \ref{finiteorder} every element of finite order is conjugate to a letter of $G_1$ or $G_2$. Let $v \in \Gamma$ be a vertex generating a primary group $G_v$. Without loss of generality we can assume that $v$ is conjugate to a non-trivial letter of $G_1$. That is, $v$ can be written as a reduced word $ag_1a^{-1}$, where $g_1 \in G_1$ and $a$ is a reduced word. 

Let $w$ be adjacent to $v$. Then $w$ is conjugate to a non-trivial letter by Lemma \ref{finiteorder}. We will prove by contradiction that $w$ is a conjugate of a letter of $G_1$ as well. Assume that $w$ was conjugate to a letter of $G_2$. Then $w$ can be written as a reduced word as $bg_2 b^{-1}$ where $g_2 \in G_2$. Let $c$ be the reduced form of $a^{-1} b$. Since $v$ and $w$ are adjacent, they commute. Then 
\begin{align*}
1=a^{-1}[v,w]a = a^{-1}[ag_1a^{-1},bg_2 b^{-1}]a & = g_1a^{-1} b g_2 b^{-1} a g_1^{-1} a^{-1} b g_2^{-1} b^{-1} a \\
& = g_1 c g_2 c^{-1} g_1^{-1} c g_2 c^{-1}.
\end{align*}
If $c$ is the empty word, this yields a contradiction since $g_1$ and $g_2$ are non-trivial letters belonging to different factors of the free product and so their commutator is non-trivial. Therefore, $c$ is non-trivial. If the last letter of $c$ belongs to $G_2$, then we define $c_0$ to be the reduced word obtained by omitting this last letter of $c$. Otherwise we set $c_0=c$. Then $cg_2c^{-1}=c_0 g_2 c_0^{-1}$ where the latter expression is reduced. Consequently, the expression $c_0 g_2^{-1} c_0^{-1}$ is reduced and represents $c g_2^{-1} c^{-1}$. We compute
\[
1=a^{-1}[v,w]a= g_1 c g_2 c^{-1} g_1^{-1} c g_2 c^{-1} = g_1  c_0 g_2 c_0^{-1}  g_1^{-1}  c_0 g_2^{-1} c_0^{-1}.
\]
Again, this would yield a contradiction if $c_0$ represented the identity. However, if $c_0$ was to begin with a letter from $G_2$, the expression $g_1  c_0 g_2 c_0^{-1}  g_1^{-1}  c_0 g_2^{-1} c_0^{-1}$ would be reduced and therefore be non-trivial as well. Therefore, $c_0$ begins with a non-trivial letter $x \in G_1$ and its reduced form has length $\geq 2$. Let $c_1$ be the non-trivial reduced word obtained by omitting the first letter of $c_0$. Let $y$ be the letter from $G_1$ representing the product of $g_1$ and $x$ or empty if $g_1 x =1$. Then the product of reduced words $y c_1 g_2 c_1^{-1} g_1^{-1} c_1  g_2^{-1} c_1^{-1} x^{-1}$ is a non-trivial reduced word itself and represents the same element as $g_1  c_0 g_2 c_0^{-1}  g_1^{-1}  c_0 g_2^{-1} c_0^{-1}$. This is a contradiction. 

Consequently, $w$ is a conjugate of a letter belonging to $G_1$. By induction it follows that all vertex groups belong to the conjugacy class of $G_1$ in $W$ since $\Gamma$ is connected. Since the vertex groups generate $W_\Gamma$, it follows that $W_\Gamma$ is completely contained in the conjugacy class of $G_1$. Then it lies in the kernel of the projection onto the factor $G_2$ and so $G_2$ is the trivial group. It follows that $W_\Gamma$ is freely indecomposable. 
\end{proof}

\subsection{Aut-invariant quasimorphisms on graph products of finite abelian groups}

Recall that a direct truncated subgroup of a graph product $W$ on a graph $\Gamma=(V,E)$ is spanned by a subset of the vertex set $V^\prime \subset V$ such that $W$ decomposes as a cartesian product of the graph products on the subsets $V^\prime$ and $V-V^\prime$. Moreover, recall that by definition a graph product of finite groups $W$ is finite if and only if the underlying graph is complete since $W$ has a non-trivial free product as a subgroup otherwise.

\begin{T} \label{T5}
Let $\Gamma=(V,E)$ be a finite graph. Let $\{G_v \}_{v \in V}$ a family of finite abelian groups and let $W(\Gamma, \{G_v \}_{v \in V})$ be their graph product. Let $Z_k$ be the $k$-fold free product of groups of order two $Z_k=\Z/2 \ast \dots \ast \Z/2$. Assume that $W(\Gamma, \{G_v \}_{v \in V})$ does not decompose as a product $G_1 \times \dots \times G_\ell$ for $\ell \geq 1$ where each $G_i$ is a direct truncated subgroup that is isomorphic to $Z_k$ for some $k  \geq 1$ or finite abelian.  Then $W(\Gamma, \{G_v \}_{v \in V})$ admits infinitely many linearly independent homogeneous Aut-invariant quasimorphisms.  
\end{T}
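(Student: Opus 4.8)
The plan is to produce the quasimorphisms as averages, over the labelled graph automorphisms, of a code quasimorphism pulled back along a standard retraction onto a lower cone that is \emph{not} itself invariant under labelled graph automorphisms; naturality of code quasimorphisms is exactly what is needed to make this averaging computable. I would begin with two reductions. First, replace $\Gamma$ by the graph expanding it so that all vertex groups become primary: this leaves the group unchanged, and by Lemma \ref{direct trunc subgroup} it cannot create the excluded decomposition (the $Z_k$-factors would descend, and any finite abelian factor is a union of the blocks coming from single vertices), so the expanded group is still not of the excluded form; write $W=W_\Gamma$. Second, let $Z=\{v\in V : st(v)=V\}$. Since labelled graph automorphisms and the relation $\leq_\tau$ both respect the condition $st(v)=V$, the set $V\setminus Z$ is a lower cone with respect to $\leq_\tau$ invariant under labelled graph automorphisms, so $K_{V\setminus Z}$ is characteristic (Lemma \ref{KXcharacteristic}) with $W/K_{V\setminus Z}\cong W_{\Gamma_{V\setminus Z}}$; this quotient is still not of the excluded form (otherwise, adjoining back the central finite abelian truncated subgroup $W_{\Gamma_Z}$ would exhibit $W$ as such), it has trivial centre by Corollary \ref{Green2}, and by Lemma \ref{Gen6} it suffices to treat it. So I may assume $W=W_\Gamma$ has primary vertex groups and $st(v)\neq V$ for every $v$.

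Next I would locate the lower cone. Take a minimal equivalence class $M$ of $\sim_\tau$. Its vertices share a common star (being pairwise $\leq_s$-related finite order vertices), so $\Gamma_M$ is complete and $W_{\Gamma_M}$ is a nontrivial finite abelian group, and $L_M\neq\emptyset$ since otherwise $st(v)=V$ for $v\in M$. No vertex of $M$ is adjacent to a vertex of $L_M$, so, decomposing $\Gamma_{L_M}$ into connected components $C_1,\dots,C_m$ and using Lemma \ref{freelyindecomposable},
\[
W_{\Gamma_{M\cup L_M}}=W_{\Gamma_M}\ast W_{\Gamma_{C_1}}\ast\dots\ast W_{\Gamma_{C_m}}
\]
is a free product of $m+1\ge2$ freely indecomposable finite groups. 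Now the dichotomy: either for some minimal $M$ at least one of these factors is not $\Z/2$, or every minimal $M$ is a single vertex of order two with $\Gamma_{L_M}$ edgeless and of order two throughout. In the second case a combinatorial argument — tracking how the minimal classes and the sets $L_M$ sit inside $\Gamma$, using that every vertex lies over a minimal class via $\leq_\tau$ — shows that $\Gamma$ must be complete multipartite with all vertex groups of order two and all parts of size at least two, so $W\cong Z_{k_1}\times\dots\times Z_{k_\ell}$ is of the excluded form, a contradiction. Hence we are in the first case.

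Fix such a minimal $M$, let $F\ncong\Z/2$ be a free factor of $H:=W_{\Gamma_{M\cup L_M}}$, and let $\psi$ be a homogeneous unbounded $\Aut$-invariant quasimorphism on $H$ that vanishes on letters, as furnished by Theorem \ref{T3}; concretely $\psi$ is assembled from homogenised code quasimorphisms $\bar f^C_z$ with $z$ generic, so that naturality (Lemma \ref{naturalitycode}) applies termwise. Since $M\cup L_M$ is a lower cone, $K_{M\cup L_M}$ is $\Aut^0(W_\Gamma)$-invariant (Lemma \ref{lowerconeinvariant}), so $\psi\circ R_{M\cup L_M}$ is $\Aut^0(W_\Gamma)$-invariant, and averaging it over a set $\{\sigma_i\}$ of labelled graph automorphisms representing the cosets of $\Aut^0(W_\Gamma)$ in $\Aut(W_\Gamma)$ (Lemma \ref{MarciLemma}) gives, by Lemma \ref{finindexinvariant}, an $\Aut$-invariant homogeneous quasimorphism $\Phi(g)=\sum_i\psi(R_{M\cup L_M}(\sigma_i g))$ that still vanishes on letters. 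To see $\Phi$ is unbounded I restrict it to $H\le W_\Gamma$: when $\sigma_i(M\cup L_M)=M\cup L_M$ the automorphism $\sigma_i$ restricts to a labelled graph automorphism of $\Gamma_{M\cup L_M}$, so $\psi(R_{M\cup L_M}(\sigma_i g))=\psi(\sigma_i g)=\psi(g)$ for $g\in H$ by $\Aut$-invariance of $\psi$; when $\sigma_i(M\cup L_M)\neq M\cup L_M$, the element $R_{M\cup L_M}(\sigma_i g)$ lies in the truncated subgroup on $Y_i:=(M\cup L_M)\cap\sigma_i(M\cup L_M)$, which splits as $C\ast D$ compatibly with $H=F\ast F^\perp$, and Lemma \ref{naturalitycode} identifies $\psi$ there with the code quasimorphism of $C=F\cap W_{\Gamma_{Y_i}}$. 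The real obstacle is then to choose $F$ and a test element $g\in H$ with $\psi(g)\neq0$ so that all of these latter terms vanish — this requires understanding how the $\sigma_i$ permute $M$ and the components $C_j$ (for instance $M\cap\sigma_i(M\cup L_M)$ is always $\emptyset$ or all of $M$, and likewise for the $C_j$), and once it is carried out, homogeneity together with the fact that $R_{M\cup L_M}$ and the $\sigma_i$ are homomorphisms give $\Phi(g^n)=|J|\,n\,\psi(g)$ with $|J|\ge1$, so $\Phi$ is unbounded. Finally, letting $z$ range over the infinite family of generic tuples used in Theorem \ref{T3} yields infinitely many linearly independent homogeneous $\Aut$-invariant quasimorphisms on $W(\Gamma,\{G_v\})$, completing the proof.
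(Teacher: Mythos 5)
Your overall skeleton is the paper's: expand to primary vertex groups, split off the central vertices, locate a lower cone of the form $M\cup L_M$ for a minimal class $M$ whose associated graph product is a free product of freely indecomposable finite factors not all $\Z/2$, pull back along the retraction, and average over the labelled graph automorphisms using Lemmas \ref{lowerconeinvariant}, \ref{MarciLemma} and \ref{finindexinvariant}. Your dichotomy is a legitimate global variant of the paper's iterative procedure (the paper instead repeatedly splits off $Z_k$ and finite abelian direct truncated subgroups): in your ``second case'' one can indeed show that any vertex group $\neq\Z/2$, or any failure of non-adjacency to be transitive, produces a minimal class $\{x\}$ whose $L_{\{x\}}$ contains a non-$\Z/2$ vertex or an edge, forcing $\Gamma$ to be complete multipartite with all groups $\Z/2$ and hence of the excluded form. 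You assert rather than prove this, but it is fixable.

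The genuine gap is exactly the step you yourself flag as ``the real obstacle'': showing the averaged quasimorphism $\Phi$ is unbounded. You hope to choose a factor $F$ and a test element $g\in H=W_{\Gamma_{M\cup L_M}}$ so that every term with $\sigma_i(M\cup L_M)\neq M\cup L_M$ vanishes, but you give no argument, and in the hardest configuration no such choice is available within your framework: when the only minimal equivalence classes in the relevant factors are single $\Z/2$ vertices, the natural smaller projection target is $D_\infty$, which admits no unbounded quasimorphisms at all (Example \ref{Dinfty}), and the cross terms $\psi(R_{M\cup L_M}(\sigma_i g))$ are supported on intersections $(M\cup L_M)\cap\sigma_i(M\cup L_M)$ that need not reduce to single letters. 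This is precisely where the paper does its real work: after the claim it projects onto $W_a\ast W_b\ncong D_\infty$, distinguishes whether the two minimal classes $A,B$ are both $\Z/2$ or not, and in the easy case uses Lemma \ref{equivclassfull} to see that non-descending $\sigma_i$ contribute only letters, while in the all-$\Z/2$ case it keeps an entire connected factor $W_b$, restricts the average to $G_{v_1}\ast G_{v_3}$ or $G_{v_1}\ast(G_{v_2}\times G_{v_3})$, and uses Lemma \ref{naturalitycode} together with the vanishing of homogeneous quasimorphisms on letters and on $D_\infty$ to evaluate it as $|J|\cdot\bar f^{\,\cdot}_z$, unbounded by Proposition \ref{freeprod no Z prop}. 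Your sketch neither performs this case distinction nor offers a substitute for it (and the quasimorphism supplied by Theorem \ref{T3} is itself an average, so ``naturality applies termwise'' needs justification); until that analysis is supplied, the proof of unboundedness, and hence of the theorem, is incomplete.
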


\begin{proof}
We can replace $\Gamma$ and $\{ G_v \}_{v \in V}$ with a graph $\Gamma^\prime$ and a collection of primary abelian groups $\{ G_v^\prime \}_{v \in V}$ such that $W(\Gamma, \{G_v \}_{v \in V}) \cong W(\Gamma^\prime, \{G_v^\prime \}_{v \in V})=: W_{\Gamma^\prime}$. If $\Gamma^\prime$ has more than one connected component, then $W_{\Gamma^\prime}$ is the free product of the graph products associated to the connected components of $\Gamma^\prime$. By Lemma \ref{freelyindecomposable} each of those connected components is itself freely indecomposable and by assumption $W(\Gamma, \{G_v \}_{v \in V})$ is not a free product of groups of order two. So Theorem \ref{T3} applies and the result follows. 

Therefore, we may assume that $\Gamma^\prime$ is a connected graph. Moreover, recall that by Lemma \ref{direct trunc subgroup} if $Z_k$ is a direct truncated subgroup of  $W(\Gamma^\prime, \{G_v^\prime \}_{v \in V})$, then $Z_k$ is a direct truncated subgroup in $W(\Gamma, \{G_v \}_{v \in V})$. We will outline an iterative procedure proving the following claim.

\begin{claim}
If $W(\Gamma, \{G_v \}_{v \in V})$ is not finite, there exists a lower cone $L$ in $\Gamma^\prime$ and $n \geq 2$ such that  $W_{\Gamma^\prime_L}$ is a free product of $W_1, \dots, W_n$, the groups generated by the connected components of $L$, such that  $W_1, \dots, W_n$ are not infinite cyclic and are not all equal to $\Z/2$.
\end{claim}

If $st(v)=V$ for some $v \in V$, then the equivalence class $[v] \subset V$ is maximal with respect to $\leq_\tau$ by Lemma \ref{maxequivalenceclass}. Thus, $V-[v]$ is a lower cone with respect to $\leq_\tau$ and $W_{\Gamma^\prime} \cong W_{\Gamma^\prime_{[v]}} \times W_{\Gamma^\prime-[v]}$ where $\Gamma^\prime-[v]$ is the subgraph of $\Gamma^\prime$ spanned by all vertices $v \in V-[v]$. According to Lemma \ref{equivalenceclassesleqtau} the group $W_{\Gamma^\prime_{[v]}}$ is finite and abelian. We then proceed by considering the graph $\Gamma^\prime - [v]$. Therefore, we will assume in the following that no $v \in V$ satisfies $st(v)=V$. 

From now onwards let $M$ be an equivalence class which is minimal with respect to $\leq _\tau$. Again, Lemma \ref{equivalenceclassesleqtau} implies that $M$ generates a finite abelian group $W_{\Gamma^\prime_M}$ in $W_{\Gamma^\prime}$, since all $G_v^\prime$ are finite. Moreover, $st(v)=st(w)$ for all $v,w \in M$. We consider the set of vertices $M \cup L_M$ which form a lower cone with respect to $\leq_\tau$ according to Lemma \ref{equivclassLowercone}. The set $L_M$ is non-empty since $st(v) \neq V$ for $v \in M$. Since no vertex $v \in M$ shares an edge with a vertex $w \in L_M$, the group $W_{M \cup L_M}$ decomposes as a free product $W_M \ast W_{L_M}$. 

Since no free factor can be the infinite cyclic group this proves the claim unless $W_M \cong Z/2$ and all connected components of $L_M$ consist of single vertices with vertex groups equal to $\Z/2$. Then $M= \{x\}$ and $L_M=\{y_1, \dots , y_\ell \}$ for some $\ell \geq 1$ and both are lower cones with respect to $\leq_\tau$ according to Lemma \ref{equivclassLowercone}. In fact, since all equivalence classes generate finite abelian groups, this implies that each vertex $y_i$ is its own equivalence class and therefore minimal since $L_M$ is a lower cone. All vertices in $V-(M \cup L_M)$ commute with $x$ by definition of $L_M$. If there was $z \in V-(M \cup L_M)$ such that $[z,y_i] \neq 1$ for some $i$, then $x,z \in L_{ \{y_i \} }$. In this case $\{y_i \} \cup L_{ \{y_i\} }$ would be a lower cone for which $W_{ \{y_i \} \cup L_{ \{y_i\} } }$ decomposes as a free product of two freely indecomposable factors which are not all equal to $\Z/2$ since one connected component contains an edge. 

Otherwise, we are in the situation where $x$ and all $y_i$ commute with all other vertices. Then $M \cup L_M$ generates a direct truncated subgroup in $\Gamma^\prime$ that is isomorphic to $Z_k$. By Lemma \ref{direct trunc subgroup} $Z_k$ comes from a direct truncated subgroup spanned by the same vertex set in $W(\Gamma, \{G_v \}_{v \in V})$. By Lemma \ref{directtruncatedlowercone} the complement $V-(M \cup L_M)$ is a lower cone and we restart the iterative procedure using the graph $\Gamma_{V-(M \cup L_M)}$. If $W_\Gamma$ is not finite, this process either terminates with a lower cone as specified in the claim or yields a decomposition of $W_\Gamma$ into direct truncated subgroups all of which are of the form $Z_k$ for some $k \geq 2$. However, the latter is impossible by assumption. This proves the claim. \\

It follows from the claim that by Lemma \ref{lowerconeinvariant} and Lemma \ref{Aut0-projection} there exists an $\Aut^0$-equivariant map $p \colon W_{\Gamma^\prime} \to W_a \ast W_b$ for some $a,b \in \{1, \dots, n \}$ such that $W_a \ast W_b \ncong D_\infty$. Let $A$ in $W_a$ and $B$ in $W_b$ be minimal equivalence classes. We distinguish two cases.

First, consider the case where $\Gamma_A$ and $\Gamma_B$ are not both equal to $\Z/2$. These two equivalence classes are lower cones themselves, which means that the projection $q \colon W_{\Gamma^\prime} \to W_{\Gamma_A} \ast W_{\Gamma_B}$ is $\Aut^0$-equivariant. Let $\psi \colon W_{\Gamma_A} \ast W_{\Gamma_B} \to \R$ be any unbounded Aut-invariant quasimorphism which always exists according to Theorem \ref{T1}. Then $\psi \circ p$ is an unbounded $\Aut^0$-invariant quasimorphism on $W_{\Gamma^\prime}$. Since the labelled graph automorphisms $\{\sigma_i\}$ form a set of coset representative for $\Aut^0(W_{\Gamma^\prime})$ in $\Aut(W_{\Gamma^\prime})$ according to Lemma \ref{MarciLemma} , the quasimorphism $\widehat{\psi \circ q}$ defined for $g \in W_{\Gamma^\prime}$ by $\widehat{\psi \circ q}(g)= \sum_i \psi(q(\sigma_i(g)))$ is invariant under the whole automorphism group $\Aut(W_{\Gamma^\prime})$ by Lemma \ref{finindexinvariant}. It remains to check that it is unbounded. 

Any labelled graph automorphism $\sigma \in \Aut(W_{\Gamma^\prime})$ satisfies by Lemma \ref{equivclassfull}  
\begin{align*}
\sigma(A) \cap A, \sigma(A) \cap B, \sigma(B) \cap A, \sigma(B) \cap A  \in \{A,B \}.
\end{align*}
Thus, any labelled graph automorphism either descends to an automorphism of $W_{\Gamma^\prime_A} \ast W_{\Gamma^\prime_B}$ or satisfies that $q(\sigma(w))$ is a single letter in $W_{\Gamma^\prime_A} \ast W_{\Gamma^\prime_B}$ for all words $w \in W_{\Gamma^\prime_A} \ast W_{\Gamma^\prime_B}$. Let $J \leq I$ be the subset of labelled graph automorphisms in $W_{\Gamma^\prime}$ that descend to $W_{\Gamma^\prime_A} \ast W_{\Gamma^\prime_B}$. Then $|J| \geq 1$ and for all $g \in W_{\Gamma_M^\prime} \ast W_{\Gamma_N^\prime}$ considered as a subgroup of $W_{\Gamma^\prime}$ one calculates  
\begin{align*}
\widehat{\psi \circ q}(g)= \sum_i \psi(q(\sigma_i(g))) = |J| \psi(g).
\end{align*}
Therefore, the Aut-invariant quasimorphism $\widehat{\psi \circ p}$ is unbounded on $W_{\Gamma^\prime}$. Finally, linear independence of quasimorphisms constructed in this way follows from linear independence of the quasimorphisms in Theorem \ref{T1}.

Second, consider the case where all non-trivial minimal equivalence classes in $A \in W_a$ and $B \in W_b$ just consist of single vertices $A = \{v_1 \} $ and $B= \{v_2 \}$ with both vertex groups equal to $\Z/2$. Since $W_a$ and $W_b$ are freely indecomposable, both of them are connected. Since it holds that $W_a \ast W_b \ncong D_\infty$, there exists a second vertex in at least one of them. So without loss of generality we assume there is $v_3 \in W_b$ that is adjacent to $v_2$. Replace $W_a$ by just $G_{v_1}$ since $\{v_1\}$ is a lower cone itself and denote the resulting projection map $W_\Gamma \to G_{v_1} \ast W_b$ again by $p$ for simplicity. Let $\sigma$ be a labelled graph automorphism of $W_{\Gamma^\prime}$. We will proceed similarly to the first case of the proof to show that the symmetrisation of the homogenisation of our counting quasimorphisms is unbounded, by showing that it is unbounded when we restrict to suitable subgroups of $W_{\Gamma}$. For this we distinguish the cases where $G_{v_3}$ is not equal to $\Z/2$ and where it is. 

If $G_{v_3} \neq \Z/2$ then $\sigma(v_3)=v_1$ and $\sigma(v_1)=v_3$ are impossible. So if we have $p(\sigma(v_1)) \neq 0$ and $p(\sigma(v_3)) \neq 0$, then either $\sigma(v_1), \sigma(v_3)$ both belong to $W_{b}$ or $\sigma(v_1) = v_1$ and $\sigma(v_3) \in W_{b}$. So restricted to the subgroup $G_{v_1} \ast G_{v_3}$ either a labelled graph automorphism $\sigma$ preserves the $W_b$-code or the result is a letter. Consequently, we calculate using Lemma \ref{naturalitycode} for the last equality that
\begin{align*}
(\widehat{\bar{f}^{W_b}_z \circ p})|_{G_{v_1} \ast G_{v_3}}= \sum_i \bar{f}^{W_b}_z(p(\sigma_i |_{G_{v_1} \ast G_{v_3}})) = |J| \cdot (\bar{f}^{W_b}_z)|_{G_{v_1} \ast G_{v_3}} =|J| \cdot \bar{f}^{G_{v_3}}_z,
\end{align*}
where $J$ is the subset of labelled graph automorphisms satisfying $\sigma(v_1)$ and $\sigma(v_3) \in W_b$. Since $J$ contains the identity, $|J| \geq 1$. Therefore, $\widehat{\bar{f}^{W_b}_z \circ p}$ is unbounded by Proposition \ref{freeprod no Z prop} which proves the result in this case.  

However, if $G_{v_3} = \Z/2$, then $p(\sigma(G_{v_1} \ast (G_{v_2} \times G_{v_3}))) \leq D_\infty \leq W_a \ast W_b$ if $\sigma(v_2)=v_1$ or $\sigma(v_3)=v_1$ since $v_2$ and $v_3$ are adjacent. So $p(\sigma(G_{v_1} \ast (G_{v_2} \times G_{v_3}))) \in \{ D_\infty \leq W_a \ast W_b, W_a, W_b \}$ unless $\sigma(v_1)=v_1$ and $\sigma(\{v_2,v_3\}) \subset W_b$. Similarly to the previous case we calculate 
\begin{align*}
(\widehat{\bar{f}^{W_b}_z \circ p})|_{G_{v_1} \ast (G_{v_2} \times G_{v_3})} &= \sum_i \bar{f}^{W_b}_z(p(\sigma_i |_{G_{v_1} \ast (G_{v_2} \times G_{v_3})})) = |J| \cdot (\bar{f}^{W_b}_z)|_{G_{v_1} \ast (G_{v_2} \times G_{v_3})} \\
&=|J| \cdot \bar{f}^{G_{v_2} \times G_{v_3}}_z,
\end{align*}
where we again used Lemma \ref{naturalitycode} for the last equality and $J$ denotes the subset of labelled graph automorphisms satisfying $\sigma(v_1)=v_1$ and $\sigma(\{v_2,v_3\}) \subset W_b$. As before, $|J| \geq 1$, so $\widehat{\bar{f}^{W_b}_z \circ p}$ is unbounded by Proposition \ref{freeprod no Z prop}. This concludes the last case and proves the result of the theorem.
\end{proof}

\begin{cor}
Let $\Gamma=(V,E)$ be a finite graph. Let $\{G_v \}_{v \in V}$ be a family of finite abelian groups and let $W(\Gamma, \{G_v \}_{v \in V})$ be their graph product. If no vertex group is equal to $\Z/2$ then either $W(\Gamma, \{G_v \}_{v \in V})$ is finite or it admits unbounded homogeneous Aut-invariant quasimorphisms. 
\end{cor}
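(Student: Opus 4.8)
The plan is to deduce this corollary directly from Theorem \ref{T5}. That theorem produces infinitely many linearly independent homogeneous Aut-invariant quasimorphisms on $W := W(\Gamma, \{G_v\}_{v \in V})$ unless $W$ decomposes as a product $G_1 \times \dots \times G_\ell$ of direct truncated subgroups, each of which is either isomorphic to $Z_k$ for some $k \geq 1$ or finite abelian. So the entire argument reduces to one claim: \emph{if no vertex group is $\Z/2$ and $W$ is not finite, then $W$ admits no such decomposition.}

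To prove the claim, I would first record the standard fact that every finite subgroup of $Z_k$ has order at most two. This is trivial for $k = 1$, and for $k \geq 2$ it follows from the action of $Z_k = \Z/2 \ast \dots \ast \Z/2$ on the Bass--Serre tree of this free-product splitting, whose vertex stabilizers have order at most two, together with the fact that a finite group acting on a tree fixes a vertex. Now suppose $W \cong G_1 \times \dots \times G_\ell$ is an exceptional decomposition and let $V_i \subset V$ be the vertex set spanning the direct truncated subgroup $G_i$; by definition $V_i$ is non-empty. If some $G_i$ were isomorphic to $Z_k$, then for each $v \in V_i$ the vertex group $G_v$ would embed as a finite subgroup of $Z_k$, forcing $|G_v| \leq 2$ and hence $G_v \cong \Z/2$, contradicting the hypothesis. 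Hence every $G_i$ is finite abelian, so $W = G_1 \times \dots \times G_\ell$ is a finite group, contrary to assumption. This proves the claim.

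It then follows from Theorem \ref{T5} that a non-finite $W$ carries infinitely many linearly independent homogeneous Aut-invariant quasimorphisms, and in particular at least one non-zero one, which is automatically unbounded since a bounded homogeneous quasimorphism vanishes. I do not anticipate a genuine obstacle here; the only mildly non-routine ingredient is the finite-subgroup statement for $Z_k$, which is elementary Bass--Serre theory, and the single point to be careful about is that truncated subgroups have non-empty vertex set, so that each factor $G_i$ really does contain an honest vertex group $G_v$ to which the hypothesis can be applied.
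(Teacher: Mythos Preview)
Your proof is correct and follows essentially the same route as the paper: both reduce immediately to Theorem~\ref{T5} by showing that no direct truncated factor can be isomorphic to $Z_k$ under the hypothesis, leaving only finite abelian factors and hence a finite group in the exceptional case. The one minor difference is in how you justify that a vertex group sitting inside a $Z_k$-factor must be $\Z/2$: you invoke Bass--Serre theory to bound finite subgroups of $Z_k$, whereas the paper argues more directly that a truncated subgroup is the free product of the graph products on the connected components of its underlying subgraph, so $H \cong Z_k$ forces each component to contribute a single $\Z/2$ vertex group. Both arguments are valid; the paper's is slightly more elementary and stays within the graph-product framework already developed, while yours is self-contained and does not rely on the uniqueness of free-product decompositions.
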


\begin{proof}
If the $k$-fold free product of groups of order two $Z_k=\Z/2 \ast \dots \ast \Z/2$ is a truncated subgroup $H$ of $W(\Gamma, \{G_v \}_{v \in V})$ on the vertex set $V^\prime \subset V$, then for $v^\prime \in V^\prime$ all vertex groups $G_{v^\prime}$ are groups of order two since $H$ is the free product of the connected components of $\Gamma_{V^\prime}$. Consequently, if there is $G_v \neq \Z/2$ for $v \in V$, then $W(\Gamma, \{G_v \}_{v \in V})$ does not decompose as a product of direct truncated subgroups each of which is isomorphic to some $Z_k$ for $k \geq 2$. The result follows from Theorem \ref{T5}.
\end{proof}

\begin{remark}
This is a super-strong version of the so-called $bq$-dichotomy studied in \cite{BGKM}, where instead of boundedness of a group $G$ one has finiteness and instead of any unbounded quasimorphism on $G$ one has unbounded quasimorphisms that are invariant under all automorphisms of $G$.
\end{remark}

Moreover, we obtain the following corollary of which Theorem \ref{introcorollary} is a special case.

\begin{cor}\label{lkcorollary}
$\Gamma=(V,E)$ be a finite graph that is not complete and let $W_\Gamma$ be a graph product of finite abelian groups on $\Gamma$. If there are no two vertices $v,w \in V$ such that $G_v=G_w=\Z/2$ and $lk(v)=lk(w)$ then $W_\Gamma$ admits infinitely many linearly independent homogeneous Aut-invariant quasimorphisms.  
\end{cor}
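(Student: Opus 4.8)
The plan is to deduce this corollary from Theorem \ref{T5}. That theorem produces infinitely many linearly independent homogeneous Aut-invariant quasimorphisms on $W(\Gamma,\{G_v\})$ as soon as $W_\Gamma$ does \emph{not} decompose as a direct product $G_1 \times \dots \times G_\ell$ of direct truncated subgroups each of which is isomorphic to some $Z_k$ ($k \geq 1$) or finite abelian. So the whole task is to verify this non-decomposition hypothesis under the assumptions that $\Gamma$ is not complete and that no two $\Z/2$-vertices share a link. I would argue by contradiction: suppose such a decomposition exists, and let $V = V_1 \sqcup \dots \sqcup V_\ell$ be the induced partition of the vertex set, so that $G_i = W_{\Gamma_{V_i}}$ and every generator in $V_i$ commutes with every generator in $V_j$ for $i \neq j$. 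Since $\Gamma$ is not complete, two non-adjacent vertices span an infinite subgroup $G_v \ast G_w$, so $W_\Gamma$ is infinite; as a finite product of finite abelian groups and copies of $Z_1 = \Z/2$ would be finite, at least one factor, say $G_i = W_{\Gamma_{V_i}}$, must be isomorphic to $Z_k$ with $k \geq 2$.

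The next step is to pin down $\Gamma_{V_i}$. By the Kurosh subgroup theorem every finite subgroup of a free product is conjugate into a free factor, so every finite subgroup of $Z_k = \Z/2 \ast \dots \ast \Z/2$ has order at most two. Hence for each $v \in V_i$ the finite subgroup $G_v \leq Z_k$ has order at most two, i.e. $G_v \cong \Z/2$; and if two vertices $v,w \in V_i$ were joined by an edge, they would span a subgroup $G_v \times G_w \cong \Z/2 \times \Z/2$ of order four, which is impossible. Therefore $\Gamma_{V_i}$ is edgeless and all its vertex groups are $\Z/2$. Combining this with the fact that $G_i$ is a direct factor, for every $v \in V_i$ we have $lk(v) \cap V_i = \emptyset$ while $V \setminus V_i \subseteq lk(v)$, so $lk(v) = V \setminus V_i$, a set independent of $v$. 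Since $k = |V_i| \geq 2$, any two distinct vertices $v,w \in V_i$ then satisfy $G_v = G_w = \Z/2$ and $lk(v) = lk(w)$, contradicting the hypothesis. The degenerate cases $\ell = 1$ (where $W_\Gamma$ is itself $Z_k$ or finite abelian) and $V \setminus V_i = \emptyset$ are handled verbatim by the same computation. This forces the hypothesis of Theorem \ref{T5}, and the conclusion follows. Note that Theorem \ref{T5} is phrased directly in terms of $W(\Gamma,\{G_v\})$, so there is no need to pass to the expanded graph $\Gamma'$.

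The only step with any content is the middle one, identifying that a $Z_k$-valued direct truncated subgroup can only be spanned by an edgeless collection of $\Z/2$-vertices; this rests entirely on the structural constraint that finite subgroups of $Z_k$ are small, which one gets from the Kurosh subgroup theorem (or, at the level of elements, by iterating Lemma \ref{finiteorder}). Everything else — extracting the infinite factor, reading off the common link, and invoking Theorem \ref{T5} — is straightforward bookkeeping with the vertex partition.
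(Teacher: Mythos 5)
Your argument is correct and follows essentially the paper's own route: rule out $Z_k$ with $k \geq 2$ as a direct truncated factor because its spanning vertices would all be $\Z/2$-vertices sharing the common link $V \setminus V_i$, use non-completeness of $\Gamma$ to see $W_\Gamma$ is infinite so a decomposition into finite abelian factors and copies of $Z_1$ is impossible, and then invoke Theorem \ref{T5}. The only difference is in the justification of the structural step (that such a factor is spanned by an edgeless set of $\Z/2$-vertices), which you obtain from the Kurosh subgroup theorem while the paper reads it off from its description of truncated subgroups as free products of the connected components of the induced subgraph; both are valid.
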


\begin{proof}
If the $k$-fold free product $Z_k = \Z/2 \ast \dots \ast \Z/2$ is a direct truncated subgroup of $W_\Gamma$, then all vertices generating the free factors of $Z_k$ have the same link. So by assumption $Z_k$ can never be a direct truncated subgroup of $W_\Gamma$ for any $k \geq 2$. Moreover, since $\Gamma$ is not complete, there are two vertices $v,w \in V$ that do not commute. Therefore, $W_\Gamma$ is infinite and the result follows from Theorem \ref{T5}.
\end{proof}

\section{Examples}

Let us now consider some families of examples of graphs and analyse which graph products of finite groups and which right angled Artin groups on these graphs admit unbounded Aut-invariant quasimorphisms. For a graph $\Gamma$ we will denote by $W_\Gamma$ the graph product of a family of finite groups on $\Gamma$ and by $R_\Gamma$ the right angled Artin group on $\Gamma$. Implicitly, we replace $\Gamma$ by $\Gamma^\prime$ so that $W_{\Gamma^\prime} \cong W_\Gamma$ is a graph product where each vertex group is primary if this was not already the case for $W_\Gamma$. Further, recall that groups of the form $(D_\infty)^k \times A$ do not admit any unbounded Aut-invariant quasimorphisms for all $k \geq 0$ whenever $A$ is abelian.   

\begin{example}\label{ngon}
Let $\Gamma_n$ be the graph of the regular $n$-gon. If $n=3$, then $W_{\Gamma_3} \cong G_{v_1} \times G_{v_2} \times G_{v_3}$ and $R_{\Gamma_3} \cong \Z^3$ are always abelian. 

\begin{center}
\begin{tikzpicture}[scale=1.0] 
   
	\draw[fill=black] (1,0) circle(.08);
	\draw[fill=black] (-0.5,0.866) circle(.08);
	\draw[fill=black] (-0.5,-0.866) circle(.08);
	
	\draw[very thick] (1,0) -- (-0.5,0.866);
	\draw[very thick] (1,0) -- (-0.5,-0.866);
	\draw[very thick] (-0.5,-0.866) -- (-0.5,0.866);
	
	\draw[fill=black] (5,0) circle(.08);
	\draw[fill=black] (3,0) circle(.08);
	\draw[fill=black] (4,1) circle(.08);
	\draw[fill=black] (4,-1) circle(.08);
	
	\draw[very thick] (5,0) -- (4,1);
	\draw[very thick] (5,0) -- (4,-1);
	\draw[very thick] (3,0) -- (4,-1);
	\draw[very thick] (3,0) -- (4,1);
	
	\draw[fill=black] (9,0) circle(.08);
	\draw[fill=black] (8.309,0.951) circle(.08);
	\draw[fill=black] (8.309,-0.951) circle(.08);
	\draw[fill=black] (7.101,0.588) circle(.08);
	\draw[fill=black] (7.101,-0.588) circle(.08);
	
	\draw[very thick] (9,0) -- (8.309,0.951);
	\draw[very thick] (9,0) -- (8.309,-0.951);
	\draw[very thick] (8.309,0.951) -- (7.101,0.588);
	\draw[very thick] (8.309,-0.951) -- (7.101,-0.588);
	\draw[very thick] (7.101,0.588) -- (7.101,-0.588);	
	
	\draw[fill=black] (13,0) circle(.08);
	\draw[fill=black] (12.5,0.866) circle(.08);
	\draw[fill=black] (12.5,-0.866) circle(.08);
	\draw[fill=black] (11.5,0.866) circle(.08);
	\draw[fill=black] (11.5,-0.866) circle(.08);
	\draw[fill=black] (11,0) circle(.08);
	
	\draw[very thick] (13,0) -- (12.5,0.866) ;
	\draw[very thick] (13,0) -- (12.5,-0.866);
	\draw[very thick] (12.5,0.866) -- (11.5,0.866);
	\draw[very thick] (12.5,-0.866) -- (11.5,-0.866);
	\draw[very thick] (11.5,0.866) -- (11,0);	
	\draw[very thick] (11.5,-0.866) -- (11,0);
	
	\node at (0,-1.5) {$n=3$};
	\node at (4,-1.5) {$n=4$};
	\node at (8,-1.5) {$n=5$};
	\node at (12,-1.5) {$n=6$};
	
\end{tikzpicture} 
\end{center}
For $n \geq 5$ the links of no two vertices are equal and so $W_{\Gamma_n}$ always admits infinitely many linearly independent Aut-invariant quasimorphisms according to Corollary \ref{lkcorollary}. Moreover, the condition $lk(v) \subset st(w)$ is never satisfied for any distinct vertices $v,w$ and so $R_{\Gamma_n}$ does not admit any transvections. Therefore, it follows from Corollary \ref{RAAG2} that $R_{\Gamma_n}$ admits infinitely many linearly independent Aut-invariant quasimorphisms. 

For $n=4$ it follows from Theorem \ref{T5} that $W_{\Gamma_4}$ has infinitely many linearly independent homogeneous Aut-invariant quasimorphisms except for the single case where all vertex groups are $\Z/2$ and so $W_{\Gamma_4} \cong D_\infty \times D_\infty$. Furthermore, for $R_{\Gamma_4} \cong F_2 \times F_2$ opposite vertices belong to the same equivalence class with respect to $\sim_\tau$. Both of these classes are minimal and it follows from Proposition \ref{RAAG1} that $R_{\Gamma_4}$ admits infinitely many linearly independent homogeneous Aut-invariant quasimorphisms.
\end{example}

\begin{example} \label{ncube}
Let $\Gamma_n$ be the graph given by the 1-skeleton of the $n$-cube. Clearly, for $n=1$ we have that $W_{\Gamma_1} \cong G_{v_1} \times G_{v_2}$ and $R_{\Gamma_1} \cong \Z^2$ are abelian. The case $n=2$ is the case of the regular $4$-gon discussed in Example \ref{ngon} above. 
\begin{center}
\begin{tikzpicture}[scale=1.0] 
	
	\draw[fill=black] (1.5,0.8) circle(.06);
	\draw[fill=black] (1.5,-0.8) circle(.06);

	\draw[very thick] (1.5,0.8) -- (1.5,-0.8);
	
	
	\draw[fill=black] (5,0.8) circle(.06);
	\draw[fill=black] (5,-0.8) circle(.06);
	\draw[fill=black] (3.4,0.8) circle(.06);
	\draw[fill=black] (3.4,-0.8) circle(.06);
	
	\draw[very thick] (5,0.8) -- (5,-0.8) ;
	\draw[very thick] (5,0.8) -- (3.4,0.8);
	\draw[very thick] (5,-0.8)  -- (3.4,-0.8);
	\draw[very thick] (3.4,0.8) -- (3.4,-0.8);
	
	
	\draw[fill=black] (8.5,0.8) circle(.06);
	\draw[fill=black] (8.5,-0.8) circle(.06);
	\draw[fill=black] (6.9,0.8) circle(.06);
	\draw[fill=black] (6.9,-0.8) circle(.06);
	
	\draw[very thick] (8.5,0.8) -- (8.5,-0.8) ;
	\draw[very thick] (8.5,0.8) -- (6.9,0.8);
	\draw[very thick] (8.5,-0.8)  -- (6.9,-0.8);
	\draw[very thick] (6.9,0.8) -- (6.9,-0.8);
	
	\draw[fill=black] (9.1,1.4) circle(.06);
	\draw[fill=black] (9.1,-0.2) circle(.06);
	\draw[fill=black] (7.5,1.4) circle(.06);
	\draw[fill=black] (7.5,-0.2) circle(.06);
	
	\draw[very thick] (9.1,1.4) -- (9.1,-0.2) ;
	\draw[very thick] (9.1,1.4) -- (7.5,1.4);
	\draw[very thick] (9.1,-0.2)  -- (7.5,-0.2);
	\draw[very thick] (7.5,1.4) -- (7.5,-0.2);
	
	\draw[very thick] (9.1,1.4) -- (8.5,0.8) ;
	\draw[very thick] (6.9,0.8) -- (7.5,1.4);
	\draw[very thick] (9.1,-0.2)  -- (8.5,-0.8);
	\draw[very thick] (6.9,-0.8) -- (7.5,-0.2);

	
	\draw[very thick] (11,-1.6) -- (11,1.6) ;
	\draw[very thick] (14.2,-1.6) -- (14.2,1.6);
	\draw[very thick] (14.2,1.6) -- (11,1.6) ;
	\draw[very thick] (14.2,-1.6) -- (11,-1.6);
	
	\draw[very thick] (11.7,-0.6) -- (11.7,2.6) ;
	\draw[very thick] (14.9,-0.6) -- (14.9,2.6);
	\draw[very thick] (14.9,2.6) -- (11.7,2.6) ;
	\draw[very thick] (14.9,-0.6) -- (11.7,-0.6);
	
	\draw[very thick] (11,-1.6) -- (11.7,-0.6) ;
	\draw[very thick] (14.2,-1.6) -- (14.9,-0.6);
	\draw[very thick] (11,1.6) -- (11.7,2.6) ;
	\draw[very thick] (14.2,1.6) -- (14.9,2.6);
	
	
	\draw[very thick] (12.3,-0.3) -- (13.3,-0.3) ;
	\draw[very thick] (12.3,0.7) -- (13.3,0.7) ;
	\draw[very thick] (12.3,-0.3) -- (12.3,0.7) ;
	\draw[very thick] (13.3,-0.3) -- (13.3,0.7) ;
	
	\draw[very thick] (12.533,0.033) -- (13.533,0.033);
	\draw[very thick] (12.533,1.033) -- (13.533,1.033);
	\draw[very thick] (12.533,0.033) -- (12.533,1.033);
	\draw[very thick] (13.533,0.033) -- (13.533,1.033);
	
	\draw[very thick] (12.3,-0.3) --  (12.533,0.033);
	\draw[very thick] (12.3,0.7) -- (12.533,1.033) ;
	\draw[very thick] (13.3,-0.3) -- (13.533,0.033) ;
	\draw[very thick] (13.3,0.7) -- (13.533,1.033) ;
	
	
	\draw[very thick] (13.533,1.033) -- (14.9,2.6);
	\draw[very thick] (13.533,0.033) -- (14.9,-0.6);
	\draw[very thick] (13.3,0.7) -- (14.2,1.6);
	\draw[very thick] (13.3,-0.3) -- (14.2,-1.6);
	
    \draw[very thick] (11,-1.6) -- (12.3,-0.3);
	\draw[very thick] (11,1.6) -- (12.3,0.7);
	\draw[very thick] (11.7,-0.6) -- (12.533,0.033);
	\draw[very thick] (11.7,2.6) -- (12.533,1.033);
	
	\draw[fill=black] (11,-1.6) circle(.06);
	\draw[fill=black] (11,1.6) circle(.06);
	\draw[fill=black] (14.2,-1.6) circle(.06);
	\draw[fill=black] (14.2,1.6) circle(.06);
	\draw[fill=black] (11.7,-0.6) circle(.06);
	\draw[fill=black] (11.7,2.6) circle(.06);
	\draw[fill=black] (14.9,-0.6) circle(.06);
	\draw[fill=black] (14.9,2.6) circle(.06);
	
	\draw[fill=black] (12.3,-0.3) circle(.06);
	\draw[fill=black] (13.3,-0.3) circle(.06);
	\draw[fill=black] (12.3,0.7) circle(.06);
	\draw[fill=black] (13.3,0.7) circle(.06);
	\draw[fill=black] (12.533,0.033) circle(.06);
	\draw[fill=black] (13.533,0.033) circle(.06);
	\draw[fill=black] (12.533,1.033) circle(.06);
	\draw[fill=black] (13.533,1.033) circle(.06);
	
	\node at (1.5,-1.5) {$n=1$};
	\node at (4.2,-1.5) {$n=2$};
	\node at (7.7,-1.5) {$n=3$};
	\node at (12.8,-2.3) {$n=4$};
\end{tikzpicture} 
\end{center}
Let $n \geq 3$. The links of no two vertices are equal and the condition $lk(v) \subset st(w)$ is never satisfied for any two distinct $v,w \in V_n$. Consequently, it follows from Corollary \ref{lkcorollary} and Corollary \ref{RAAG2} that $W_{\Gamma_n}$ and $R_{\Gamma_n}$ always admit infinitely many linearly independent Aut-invariant quasimorphisms.
\end{example}

\begin{example}[Platonic solids]
The graph given by the tetrahedron $T$ is the complete graph on $4$ vertices and therefore $W_T \cong G_{v_1} \times G_{v_2} \times G_{v_3} \times G_{v_4}$ and $R_T \cong \Z^4$ are abelian. The cube $C$ is the case $n=3$ of Example \ref{ncube} above. For the icosahedron $I$ and the dodecahedron $D$ the conditions $lk(v)=lk(w)$ and $lk(v) \subset st(w)$ are both never satisfied for distinct vertices $v,w$. It follows from Corollary \ref{lkcorollary} and Corollary \ref{RAAG2} that $W_I$, $W_D$, $R_I$ and $R_D$ all admit infinitely many linearly independent Aut-invariant quasimorphisms. Let $O$ be the graph of the octahedron pictured below. 
\begin{center}
\begin{tikzpicture}[scale=1.0] 	

	\draw[fill=black] (8,-0.8) circle(.06);
	\draw[fill=black] (6.4,-0.8) circle(.06);
	\draw[very thick] (8,-0.8)  -- (6.4,-0.8);
	
	\draw[fill=black] (9,-0.2) circle(.06);
	\draw[fill=black] (7.4,-0.2) circle(.06);
	
	\draw[very thick] (9,-0.2)  -- (7.4,-0.2);
	\draw[very thick] (9,-0.2)  -- (8,-0.8);
	\draw[very thick] (6.4,-0.8) -- (7.4,-0.2);
	
	\draw[fill=black] (7.7,0.8) circle(.06);
	
	\draw[very thick] (9,-0.2)  -- (7.7,0.8);
	\draw[very thick] (7.4,-0.2)  -- (7.7,0.8);
	\draw[very thick] (8,-0.8)  -- (7.7,0.8);
	\draw[very thick] (6.4,-0.8)  -- (7.7,0.8);
	
	\draw[fill=black] (7.7,-1.8) circle(.06);
	
	\draw[very thick] (9,-0.2)  -- (7.7,-1.8);
	\draw[very thick] (7.4,-0.2)  -- (7.7,-1.8);
	\draw[very thick] (8,-0.8)  -- (7.7,-1.8);
	\draw[very thick] (6.4,-0.8)  -- (7.7,-1.8);
	
	\node at (7.7,-2.05) {$w$};
	\node at (7.7,1.05) {$v$};
	
	\node at (9.3,-0.15) {$b$};
	\node at (6.1,-0.75) {$d$};
	\node at (7.17,-0.1) {$c$};
	\node at (8.22,-0.9) {$a$};

\end{tikzpicture} 
\end{center}
In $R_O$ there are the three equivalence classes $\{a,c\}$, $\{b,d\}$ and $\{v,w\}$, all of which are minimal with respect to $\sim_\tau$ and generate $F_2$. So it follows from Proposition \ref{RAAG1} that $R_O$ admits infinitely many linearly independent Aut-invariant quasimorphisms. In $W_O$ any two opposite vertices have the same link and $W_O \cong (G_v \ast G_w) \times (G_a \ast G_c) \times (G_b \ast G_d)$ as a product of direct truncated subgroups. It follows from Theorem \ref{T5} that $W_O$ admits infinitely many linearly independent Aut-invariant quasimorphisms unless all vertex groups are equal to $\Z/2$ in which case $W_O$ is isomorphic to $(D_\infty)^3$.

\end{example}

In the above examples we can never find lower cones invariant under labelled graph automorphisms that are not the whole graph. Let us see in some less symmetrical graphs how we can extract Aut-invariant quasimorphisms by more elementary means from lower cones and Proposition \ref{PropAutcone} without appealing to the full strength of Theorem \ref{T5}.

\begin{example} \label{nline}
Let $A_n$ denote the graph given by the standard triangulation of the interval $[0,n]$ with all integers in $[0,n]$ as vertices and edges of length one in between them. 
\begin{center}
\begin{tikzpicture}[scale=1.0]   
	
	\draw[fill=black] (0,0) circle(.08);
	\draw[fill=black] (1.5,0) circle(.08);
	\draw[fill=black] (3,0) circle(.08);
	\draw[fill=black] (6,0) circle(.08);
	\draw[fill=black] (7.5,0) circle(.08);

	\draw[very thick] (0,0) -- (1.5,0);
	\draw[very thick] (1.5,0) -- (3.5,0);
	\draw[very thick] (5.5,0) -- (7.5,0);	
	
	\draw[very thick,dotted] (3.5,0) -- (4,0);
	\draw[very thick,dotted] (5,0) -- (6,0);
	
	\node at (0,0.4) {$v_0$};
	\node at (1.5,0.4) {$v_1$};
	\node at (3,0.4) {$v_2$};
	\node at (6,0.4) {$v_{n-1}$};
	\node at (7.5,0.4) {$v_n$};
	
\end{tikzpicture}
\end{center}

For $n=0,1$ the groups $W_{A_n}$ and $R_{A_n}$ are all abelian and so none of them admit unbounded Aut-invariant quasimorphisms.  

For $n \geq 2$ the set $\{v_0, v_n \}$ is a lower cone invariant under labelled graph automorphisms. Then it follows directly from Proposition \ref{PropAutcone} that $R_{A_n}$ and $W_{A_n}$ admit infinitely many linearly independent homogeneous Aut-invariant quasimorphisms except for $W_{A_n}$ in the case where $G_{v_0} = G_{v_n} = \Z/2$. So assume that $G_{v_0} = G_{v_n} = \Z/2$ for the rest of this example. For $n \geq 4$ the set $\{v_0, v_1, v_{n-1}, v_n \}$ forms a lower cone as well and Proposition \ref{PropAutcone} again yields the existence of infinitely many linearly independent homogeneous Aut-invariant quasimorphisms. For $n=2$ it holds that $W_{A_2} \cong G_{v_1} \times D_\infty$  and so $W_{A_2}$ does not admit any unbounded quasimorphisms since $D_\infty$ does not according to Example \ref{Dinfty} and $G_{v_2}$ is finite. Only to settle the case $n=3$ we note that no two distinct vertices have the same link and so Corollary \ref{lkcorollary} yields the existence of infinitely many linearly independent Aut-invariant quasimorphisms. 
\end{example}

\begin{remark}
The particular choice of lower cone is usually not unique. For example, for $n \geq 8$ the set $\{ v_2, v_3, v_5, v_6 \}$ is a lower cone that intersects trivially with the lower cone  $\{v_0, v_1, v_{n-1}, v_n \}$ chosen in the argument before. Different choices of lower cones yield entirely different Aut-invariant quasimorphisms.  
\end{remark}

\begin{example} \label{ntwoendedline}
Let $B_n$ denote the graph which is $A_{n-2}$ with two additional vertices attached to the vertex $n-2$. So $B_n$ has $n+1$ vertices. Then $B_2$ is the graph $A_2$ of Example \ref{nline} above. 
\begin{center}
\begin{tikzpicture}[scale=1.0]   
	
	\draw[fill=black] (0,0) circle(.08);
	\draw[fill=black] (1.5,0) circle(.08);
	\draw[fill=black] (3,0) circle(.08);
	\draw[fill=black] (6,0) circle(.08);
	\draw[fill=black] (7.5,0) circle(.08);
	\draw[fill=black] (9,1) circle(.08);
	\draw[fill=black] (9,-1) circle(.08);

	\draw[very thick] (0,0) -- (1.5,0);
	\draw[very thick] (1.5,0) -- (3.5,0);
	\draw[very thick] (5.5,0) -- (7.5,0);	
	\draw[very thick] (7.5,0) -- (9,1);
	\draw[very thick] (7.5,0) -- (9,-1);
	
	\draw[very thick,dotted] (3.5,0) -- (4,0);
	\draw[very thick,dotted] (5,0) -- (6,0);
	
	\node at (0,0.4) {$v_0$};
	\node at (1.5,0.4) {$v_1$};
	\node at (3,0.4) {$v_2$};
	\node at (6,0.4) {$v_{n-3}$};
	\node at (7.4,0.4) {$v_{n-2}$};
	\node at (9,1.4) {$v_{n-1}$};
	\node at (9,-0.6) {$v_{n}$};
	
\end{tikzpicture}
\end{center}
For $n \geq 4$ the set $\{v_{n-2}, v_{n-1}, v_n \}$ forms a lower cone of $W_{B_n}$ and $R_{B_n}$ respectively. Moreover, $\{v_0\}$ is a minimal equivalence class with respect to $\sim_\tau$. So $\{v_0 ,  v_{n-2}, v_{n-1}, v_n \}$ forms a lower cone $L$ of $W_{B_n}$ and $R_{B_n}$. $L$ is invariant under labelled graph automorphisms and the truncated subgroup generated by $L$ is isomorphic to $G_{v_0} \ast \big ( G_{v_{n-2}} \times ( G_{v_{n-1}} \ast G_{v_n}) \big )$. Since the group $G_{v_{n-2}} \times ( G_{v_{n-1}} \ast G_{v_n})$ has non-trivial center, it is freely indecomposable. So Proposition \ref{PropAutcone} implies that  $W_{B_n}$ and $R_{B_n}$ admit infinitely many linearly independent homogeneous Aut-invariant quasimorphisms for all $n \geq 4$.

For $n=3$ the vertices $\{v_0, v_2, v_3\}$ form a lower cone. By Proposition \ref{PropAutcone} the graph product $W_{B_3} \cong G_{v_1} \times (G_{v_0} \ast G_{v_2} \ast G_{v_3})$ admits infinitely many linearly independent quasimorphisms if $G_{v_0}$, $G_{v_2}$, $G_{v_3}$ are not all equal to $\Z/2$.
\end{example}

\begin{remark}
With our methods we cannot settle the case $n=3$ for $B_n$ fully since we do not know whether $F_3$ or $\Z/2 \ast \Z/2 \ast \Z/2$ admit unbouned Aut-invariant quasimorphisms. 
\end{remark}

\section{Aut-invariant stable commutator length}

For a group $G$ denote the commutator length on $[G,G]$ by $\cl_G$. That is, $\cl_G(x)$ is the minimal number of commutators needed to express $x \in [G,G]$. The \textit{stable commutator length} $\scl_G$ is defined for $x \in [G,G]$ as the limit $\scl_G(x)=\lim_n \frac{\cl(x^n)}{n}$ and shares a deep relationship with quasimorphisms von $G$ via the so-called Bavard duality (see \cite[Thm. 2.70]{Calegari}). We now define an invariant analogue of these notions according to \cite[p.7]{KK} in the case where $\hat{G}$ is a group that contains $G$ as a normal subgroup. 

\begin{defi}
Let $G \in \hat{G}$ be a normal subgroup. Let $[\hat{G},G]$ be the subgroup of $G$ generated by elements of the form $[F,g]$ and their inverses where $F \in \hat{G}$, $g \in G$. For $x \in [\hat{G},G]$ define the $\hat{G}$\textit{-invariant commutator length} $\cl_{\hat{G},G}(x)$ to be the minimal number of elements of the above form $[F,g]$ and their inverses needed to express $x$. The \textit{stable} $\hat{G}$\textit{-invariant commutator length} $\scl_{\hat{G},G}(x)$ is defined as the limit $\scl_{\hat{G},G}(x)=\lim_n \frac{\cl_{\hat{G},G}(x^n)}{n}$ for $x \in [\hat{G},G]$. 
\end{defi}

If $G$ is any group with trivial center, then it can be identified with its group of inner automorphisms $\Inn(G)$, which is a normal subgroup of the automorphism group $\Aut(G)$. Thus for $\hat{G}=\Aut(G)$ the above definition then yields the so-called \textit{Aut-invariant stable commutator length} $\scl_{\Aut(G),G}$ on $G$, which we will denote by $\scl_{\Aut}$ to simplify notation. 

For $\hat{G}=\Aut(G)$ the following lemma is proven in \cite[Lemma 2.1]{KK}.

\begin{lemma} \label{Kawasaki Lemma}
Let $G$ be a group with trivial center so that $G= \Inn(G)$. Let $\phi$ be a homogeneous Aut-invariant quasimorphism on $G$. Then any $x \in [\Aut(G),G]\leq G$ satisfies 
\begin{align*}
\scl_{\Aut}(x) \geq \frac{1}{2}\frac{|\phi(x)|}{D(\phi)}. 
\end{align*} \qed
\end{lemma}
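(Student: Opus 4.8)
The plan is to adapt the standard argument behind one direction of Bavard duality to the $\Aut$-invariant setting; the only new input is that $\Aut$-invariance of $\phi$ plays the role that conjugation-invariance plays in the classical estimate, and this makes $\phi$ uniformly bounded on the generators of $[\Aut(G),G]$. Throughout one may assume $D(\phi)>0$, since otherwise $\phi$ is a homomorphism, hence vanishes on $[\Aut(G),G]$ by $\Aut$-invariance, and there is nothing to prove.

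First I would make the identifications explicit. Since $G$ has trivial center, $h\mapsto\iota_h$ (conjugation by $h$) is an isomorphism $G\xrightarrow{\sim}\Inn(G)$, and $\Inn(G)\trianglelefteq\Aut(G)$. For $F\in\Aut(G)$ one has $F\iota_hF^{-1}=\iota_{F(h)}$, so $[F,\iota_h]=\iota_{F(h)}\iota_{h^{-1}}=\iota_{F(h)h^{-1}}$; under the identification $G\cong\Inn(G)$ each generator $[F,g]$ of $[\Aut(G),G]$ corresponds to an element of $G$ of the form $F(h)h^{-1}$. Next I would bound $\phi$ on such an element: by the quasimorphism inequality $|\phi(F(h)h^{-1})-\phi(F(h))-\phi(h^{-1})|\le D(\phi)$, while $\phi(F(h))=\phi(h)$ by $\Aut$-invariance and $\phi(h^{-1})=-\phi(h)$ by homogeneity, so $|\phi(F(h)h^{-1})|\le D(\phi)$. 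The same bound holds for inverses of generators, again using $\phi(w^{-1})=-\phi(w)$.

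Then I would run the usual telescoping estimate. If $x=e_1\cdots e_n$ with $n=\cl_{\Aut(G),G}(x)$ and each $e_i$ a generator $[F_i,g_i]$ or its inverse, iterating the quasimorphism inequality gives $\bigl|\phi(x)-\sum_{i=1}^n\phi(e_i)\bigr|\le (n-1)D(\phi)$, and combined with $|\phi(e_i)|\le D(\phi)$ this yields $|\phi(x)|\le(2n-1)D(\phi)<2n\,D(\phi)$. Finally I would stabilize: applying this to $x^m$ and using $\phi(x^m)=m\phi(x)$ gives $m|\phi(x)|<2\,\cl_{\Aut(G),G}(x^m)\,D(\phi)$; dividing by $m$ and letting $m\to\infty$ produces $|\phi(x)|\le 2\,\scl_{\Aut}(x)\,D(\phi)$, which rearranges to $\scl_{\Aut}(x)\ge\frac12\frac{|\phi(x)|}{D(\phi)}$. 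The quasimorphism bookkeeping in the last two steps is entirely routine and identical to the non-invariant case; the one step to state carefully — and the only place the hypotheses on $G$ and $\phi$ really enter — is the computation in the first paragraph showing $|\phi([F,g])|\le D(\phi)$.
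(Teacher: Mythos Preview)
Your argument is correct. Note, however, that the paper does not actually give its own proof of this lemma: it is stated with a terminal \qed\ and attributed to \cite[Lemma 2.1]{KK}, so there is no in-paper proof to compare against. What you have written is precisely the standard one-direction Bavard estimate (cf.\ \cite[Thm.~2.70]{Calegari}) adapted to the $\Aut$-invariant setting, and your identification of the key step --- that $\Aut$-invariance together with homogeneity forces $|\phi([F,g])|\le D(\phi)$ for each generator $[F,g]=F(g)g^{-1}$ --- is exactly the point of the lemma. The telescoping and stabilisation are routine, and your treatment of the degenerate case $D(\phi)=0$ is fine. This is presumably the argument in \cite{KK} as well.
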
 

According to \cite[Theorem 1.3]{KK} $\hat{G}$-invariant quasimorphisms satisfy an analogue of the Bavard duality theorem if $[\hat{G},G] =G$. For a graph product of finitely generated abelian groups the invariant $\scl_{\Aut}$ is always defined on a subgroup of finite index. In fact, it is often defined on the whole graph product.  

\begin{theorem}\label{finiteindexgraphproduct}
Let $\Gamma=(V,E)$ be a graph and let $\{G_v\}_{v \in V}$ be a collection of finitely generated abelian groups. Let $G=W(\Gamma, \{G_v\}_{v \in V})$ be their graph product. Assume that no vertex $v$ satisfying $st(v)=V$ has a vertex group satisfying $|G_v|= \infty$. Let $\hat{G}=\Aut(G)$. Then $[\hat{G},G]$ is a subgroup of finite index in $G$. Moreover, if no vertex $v \in V$ satisfies $st(v)=V$ and no element inside any vertex group $G_v$ has infinite order or order equal to a power of $2$, then $[\hat{G},G]= G$. 
\end{theorem}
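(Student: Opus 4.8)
The plan is to analyze $[\hat G, G]$ through the abelianization $G^{\mathrm{ab}} = G/[G,G]$ together with the action of a few conveniently chosen automorphisms. Since $G$ is a graph product of finitely generated abelian groups, $G^{\mathrm{ab}} \cong \bigoplus_{v \in V} G_v$, and every $[F,g]$ with $F \in \hat G$, $g \in G$ maps into $[G,G]$ trivially, so $[\hat G, G] \supseteq [G,G]$ and it suffices to understand the image of $[\hat G, G]$ in $G^{\mathrm{ab}}$, i.e.\ the subgroup $\bar I \leq G^{\mathrm{ab}}$ generated by all $\overline{[F,g]}$. For the finite-index statement I would argue that $\bar I$ has finite index in $G^{\mathrm{ab}}$; for the stronger statement I would argue $\bar I = G^{\mathrm{ab}}$ under the stated hypotheses, and then separately check that the preimage of $G^{\mathrm{ab}}$ — namely all of $G$ — actually lies in $[\hat G, G]$, which needs one extra observation about commutators $[g,h]$ with $g,h \in G$ (these already have the form $[\,\cdot\,,\,\cdot\,]$ with both entries in $G \trianglelefteq \hat G$, hence lie in $[\hat G, G]$; so $[G,G] \subseteq [\hat G,G]$ and it genuinely reduces to the abelianization).

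First I would produce, for each vertex $v$ with $st(v) \neq V$, a useful automorphism. If $\Gamma - st(v)$ is nonempty, pick a connected component $K$ and use the partial conjugation $\sigma_{K,v}$; then for $z \in K$ we get $\sigma_{K,v}(z) z^{-1} = v z v^{-1} z^{-1} = [v,z] \in [\hat G, G]$, and more usefully $[\sigma_{K,v}, z']$-type elements let us realize $[v, z'^{\,m}]$ for powers. The key point is that modulo $[G,G]$ the partial conjugations act trivially, so partial conjugations alone do not enlarge $\bar I$ beyond $[G,G]$; the elements that move things in $G^{\mathrm{ab}}$ are the factor automorphisms $\phi_{v,m}$ and the dominated transvections $\tau_{v,w}$. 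For a factor automorphism $\phi_{v,m}$ we compute $\overline{[\phi_{v,m}, v]} = \overline{\phi_{v,m}(v) v^{-1}} = \overline{v^{m-1}} = (m-1)\bar v$ in $G^{\mathrm{ab}}$, where $\gcd(m,|G_v|)=1$. Hence $\bar I$ contains $(m-1)\bar v$ for every $m$ coprime to $|G_v|$. If $|G_v| = \infty$ then $m = \pm 1$ forces $m - 1 \in \{0, -2\}$, so we only get $2\bar v$ — this is exactly why infinite-order vertices cause trouble and why the hypotheses restrict them. If $|G_v| = p^k$ with $p$ odd, choose $m = 2$ (coprime to $p^k$), giving $\bar v \in \bar I$; if additionally $p$-primary with $p=2$ is excluded, every vertex group is odd-order-primary, so we get $\bar v \in \bar I$ for all $v$, hence $\bar I = G^{\mathrm{ab}}$, which together with $[G,G] \subseteq [\hat G,G]$ gives $[\hat G,G] = G$. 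This handles the second assertion.

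For the first assertion I would run the same computation but tolerate finite index. For a vertex $v$ with $st(v) \neq V$ and $|G_v| = \infty$, the factor automorphism gives $2\bar v \in \bar I$; alternatively, if some $w \neq v$ has $v \leq w$, the transvection $\tau_{v,w}$ gives $\overline{[\tau_{v,w}, v]} = \overline{vw v^{-1}} \cdot \text{(correction)}$ — more carefully, $\overline{\tau_{v,w}(v)v^{-1}} = \overline{vwv^{-1} \cdot v^{-1}}$... I should instead use $\overline{\tau_{v,w}(v) v^{-1}} = \bar w$ since $\tau_{v,w}(v) = vw$, giving $\bar w \in \bar I$. In the primary case $\tau_{v,w}(v) = vw^q$ with $q = \max\{1,p^{\ell-k}\}$ yields $q\bar w \in \bar I$. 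In all cases, for every vertex $v$ with $st(v) \neq V$ we get some nonzero multiple $c_v \bar v \in \bar I$ (either $\bar v$ itself from $\phi_{v,2}$ when $|G_v|$ is odd primary, $2\bar v$ when $|G_v| = \infty$, or $p^j \bar v$ in the even-primary case). For a vertex $v$ with $st(v) = V$, the hypothesis says $|G_v| < \infty$, so $\bar v$ has finite order in $G^{\mathrm{ab}}$ and the cyclic subgroup it generates is finite — hence automatically contributes only a finite-index defect. Therefore $\bar I$ contains a finite-index subgroup of each cyclic summand $\langle \bar v\rangle \leq G^{\mathrm{ab}}$, and since $G^{\mathrm{ab}}$ is the direct sum of these, $\bar I$ has finite index in $G^{\mathrm{ab}}$; pulling back, $[\hat G, G] \supseteq [G,G]$ has finite index in $G$.

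\emph{Main obstacle.} The delicate point is \emph{not} the existence of the automorphisms — those are handed to us by Gutiérrez's generating set — but verifying cleanly that partial conjugations and the "non-generating" parts of transvections do not obstruct anything, i.e.\ that passing to $G^{\mathrm{ab}}$ genuinely captures $[\hat G, G]$ and that one can produce the needed commutators $[F, g]$ with $g \in G$ (not merely $F, F' \in \hat G$ acting) realizing the target elements. Concretely, I need each multiple $c_v \bar v$ to arise as the image of an honest element $[F, g]$ with $F \in \Aut(G)$ and $g \in G$; the computations $[\phi_{v,m}, v]$ and $[\tau_{v,w}, v]$ above do exactly this, so the obstacle is really bookkeeping: confirming $q$ (resp.\ $m-1$) is coprime to, or at least generates a finite-index subgroup of, $\Z/|G_v|$ in every case of the Gutiérrez classification, and making sure that when $st(v) = V$ the finiteness of $G_v$ is actually used (it is — it is precisely the stated hypothesis that rules out a central $\Z$ on which no quasimorphism-visible commutator can act surjectively). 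I expect the write-up to be short once the reduction to $G^{\mathrm{ab}}$ is set up carefully.
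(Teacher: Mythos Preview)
Your proposal is correct and follows essentially the same route as the paper: reduce to the abelianization $G^{\mathrm{ab}} \cong \bigoplus_{v} G_v$ using $[G,G] \subseteq [\hat G,G]$, and then use factor automorphisms --- inversion on infinite cyclic vertex groups to get $2\bar v \in \bar I$, and multiplication by $2$ on odd-primary vertex groups to get $\bar v \in \bar I$ --- to conclude finite index (resp.\ equality). The transvection and partial-conjugation computations you include are unnecessary for the argument (as you essentially note), and the one point the paper makes more explicit is that the identification $G \cong \Inn(G) \trianglelefteq \Aut(G)$ requires trivial center, which is arranged by observing that the center is finite under the hypothesis and passing to a finite-index subgroup.
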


\begin{proof}
Since $W(\Gamma, \{G_v\}_{v \in V})$ is isomorphic to a graph product of groups where every vertex group is either primary or infinite cyclic, we assume without loss of generality that this is the case for $\Gamma=(V,E)$ and $\{G_v\}_{v \in V}$ and write $W_\Gamma=W(\Gamma, \{G_v\}_{v \in V})$. Moreover, if no element inside any vertex group has infinite order or order equal to a power of $2$, then all vertex groups can be assumed to be primary for primes $\neq 2$. 

By Corollary \ref{Green2} the center of $W_\Gamma$ is generated by the generators $v \in V$ that satisfy $st(v)=V$. Since all vertex groups $G_v$ of those vertices are finite abelian groups, the center of $W_\Gamma$ is a finite abelian group and we may assume up to passing to a finite index subgroup of $W_\Gamma$, that $W_\Gamma$ has trivial center.

The abelianisation $W_\Gamma / [W_\Gamma, W_\Gamma]$ is obtained by taking the quotient over all $[G_v, G_w]$ for $v,w \in V$. Consequently, $W_\Gamma / [W_\Gamma, W_\Gamma] =  \bigoplus_{v \in V} G_v$. If no vertex group $G_v$ has infinite order this implies that $[\hat{W_\Gamma},W_\Gamma]$ has finite index, since $[W_\Gamma, W_\Gamma ] \leq [\hat{W_\Gamma},W_\Gamma]$. Otherwise let $v \in V$ be such that $G_v $ is infinite cyclic and by abuse of notation denote a generator of $G_v$ by $v$ as well. Since $st(v) \neq V$, then conjugation by $v$ defines a nontrivial element inside $\Aut(W_\Gamma)$. Let $r \in \Aut(W_\Gamma)$ be the factor automorphism of $W_\Gamma$ that inverts $v$. In $\Aut(W_\Gamma)$ we compute $[r,v^{-1}]=r(v^{-1}) \cdot v = v^2$. Thus, the subgroup generated by $v^2$ is contained in $[\hat{W_\Gamma},W_\Gamma]$ as well. Let $H$ be the group $\bigoplus_{v \in V^\prime} G_v  \oplus \bigoplus_{v \in V \setminus V^\prime} G_v/2$ where $V^\prime = \{ v \in V : \hspace{2mm} | G_v | < \infty \}$ . $H$ is a finite group and maps surjectively onto $W_\Gamma / [\hat{W_\Gamma},W_\Gamma]$ , which implies that $[\hat{W_\Gamma},W_\Gamma]$ has finite index in $W_\Gamma$. 

For the second part of the statement assume that all vertex groups are primary for primes $\neq 2$. Let $k \geq 1$ and let $p \neq 2$ be a prime. Since $p \neq 2$, the map $m \colon \Z/p^k \to \Z/p^k$ induced by multiplication by $2$ is an automorphism. Let $G_v$ be a vertex group such that $G_v = \Z/p^k$ and regard $m$ as the corresponding factor automorphism of $W_\Gamma$. Then $[m,v]= m(v) \cdot v^{-1} = v^2 \cdot v^{-1} = v$. Therefore, $G_v$ is a subgroup of $[\hat{W_\Gamma},W_\Gamma]$. Since $W_\Gamma$ is generated by the vertex groups, it follows that $[\hat{W_\Gamma},W_\Gamma]=W_\Gamma$.  
\end{proof}

Let $Z_k$ be the $k$-fold free product of groups of order two $Z_k=\Z/2 \ast \dots \ast \Z/2$.

\begin{theorem}\label{sclpositivegraphprod}
Let $\Gamma=(V,E)$ be a finite graph in which $st(v) \neq V$ for all $v \in V$. Then there always exist elements of positive Aut-invariant stable commutator length in the following groups:
\begin{enumerate}
\item $W(\Gamma, \{G_v \}_{v \in V})$, the graph product of a family of finite abelian groups $\{G_v \}_{v \in V}$, if the group $W(\Gamma, \{G_v \}_{v \in V})$ does not decompose as a product $G_1 \times \dots \times G_\ell$ for $\ell \geq 1$ where each $G_i$ is a direct truncated subgroup that is isomorphic to some $Z_k$ for $k \geq 1$ or finite abelian.
\item $R_\Gamma$, the right angled Artin group on $\Gamma$, if one of the following two conditions is satisfied:
\begin{itemize}
\item There is a minimal equivalence class $M$ of $\sim_\tau$ such that $R_{\Gamma_M} \cong F_2$,
\item No equivalence class $N$ of $\sim_\tau$ satisfies $R_{\Gamma_N} \cong F_k$ for $k \geq 2$. 
\end{itemize}
\end{enumerate}
\end{theorem}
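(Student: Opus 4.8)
The plan is to derive the statement from the Milnor--Wood type inequality of Lemma \ref{Kawasaki Lemma}. In each of the two cases I would carry out four steps: first check that $G$ has trivial center, so that $G$ may be identified with $\Inn(G)\leq\Aut(G)$ and $\scl_{\Aut}$ is defined on $[\Aut(G),G]$; second, produce an unbounded homogeneous Aut-invariant quasimorphism $\phi$ on $G$; third, locate an element of $[\Aut(G),G]$ on which $\phi$ is nonzero; and fourth, apply Lemma \ref{Kawasaki Lemma}.

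For the first step I would note that both $W(\Gamma,\{G_v\}_{v\in V})$ and $R_\Gamma$ are graph products of finitely generated abelian groups, so by Corollary \ref{Green2} the standing hypothesis that $st(v)\neq V$ for every $v\in V$ (which already forces $|V|\geq 2$) is exactly the statement that the center is trivial. For the second step: in case (1) the hypothesis on $W(\Gamma,\{G_v\}_{v\in V})$ is verbatim the hypothesis of Theorem \ref{T5}, which therefore yields infinitely many linearly independent homogeneous Aut-invariant quasimorphisms, among which there must be unbounded ones since a bounded homogeneous quasimorphism is zero. In case (2) I would apply Proposition \ref{RAAG1}: it concludes that $R_\Gamma$ is free abelian or admits such quasimorphisms, and the first alternative would force $\Gamma$ to be complete, hence $st(v)=V$, contrary to hypothesis; so again an unbounded homogeneous Aut-invariant $\phi$ exists. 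I would also observe that $\phi$ is not a homomorphism --- on these groups Aut-invariance forces any homomorphism to $\R$ to vanish (for RAAGs use a factor automorphism inverting a generator; for graph products of finite abelian groups the abelianisation is finite) --- so that $D(\phi)>0$; then I fix $g\in G$ with $\phi(g)\neq 0$.

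For the third step I would use Proposition \ref{finiteindexgraphproduct}, whose hypothesis is vacuously satisfied here because no vertex has $st(v)=V$: it gives that $[\Aut(G),G]$ has finite index in $G$. This subgroup is moreover normal in $\Aut(G)$, hence in $G$, because conjugation by $\psi\in\Aut(G)$ carries a defining generator $[F,h]$ to $[\psi F\psi^{-1},\psi(h)]$, which is again a defining generator. Consequently the image of $g$ in the finite group $G/[\Aut(G),G]$ has some finite order $d$, so $x:=g^{d}$ lies in $[\Aut(G),G]$, and homogeneity gives $\phi(x)=d\,\phi(g)\neq 0$. The fourth step is then immediate: Lemma \ref{Kawasaki Lemma} gives
\[
\scl_{\Aut}(x)\;\geq\;\frac12\cdot\frac{|\phi(x)|}{D(\phi)}\;=\;\frac{d\,|\phi(g)|}{2\,D(\phi)}\;>\;0,
\]
which is what we want in both cases.

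I do not expect a single serious obstacle: the statement is essentially a packaging of Theorem \ref{T5}, Proposition \ref{RAAG1}, Proposition \ref{finiteindexgraphproduct} and Lemma \ref{Kawasaki Lemma}. The places that need care are the verification that ``$st(v)\neq V$ for all $v$'' simultaneously supplies trivial center (needed for Lemma \ref{Kawasaki Lemma}) and non-completeness of $\Gamma$ (needed to rule out the free abelian alternative in case (2)), and the argument that a witness element can be pushed into $[\Aut(G),G]$ --- for which the normality of $[\Aut(G),G]$ together with homogeneity of $\phi$ is exactly what is required.
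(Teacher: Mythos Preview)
Your proposal is correct and follows essentially the same route as the paper's proof: invoke Corollary \ref{Green2} for trivial center, Theorem \ref{T5} and Proposition \ref{RAAG1} for the unbounded homogeneous Aut-invariant quasimorphism, Proposition \ref{finiteindexgraphproduct} for finite index of $[\Aut(G),G]$, and conclude via Lemma \ref{Kawasaki Lemma}. You supply more detail than the paper does --- explicitly ruling out the free abelian alternative in case (2), checking $D(\phi)>0$, and arguing normality of $[\Aut(G),G]$ to land a power of a witness inside it --- but the underlying strategy is identical.
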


\begin{proof}
By assumption all groups have trivial center according to Corollary \ref{Green2}. There exist unbounded homogeneous Aut-invariant quasimorphisms on $W_\Gamma=W(\Gamma, \{G_v \}_{v \in V})$ and $R_\Gamma$ according to Theorem \ref{T5} and Proposition \ref{RAAG1}. Since $[\widehat{W_\Gamma},W_\Gamma] \leq W_\Gamma$ and $[\widehat{R_\Gamma},R_\Gamma] \leq R_\Gamma$ have finite index by Proposition \ref{finiteindexgraphproduct} these quasimorphisms are unbounded on $[\widehat{W_\Gamma},W_\Gamma]$ and $[\widehat{R_\Gamma},R_\Gamma]$ respectively. Therefore, Lemma \ref{Kawasaki Lemma} implies the result.
\end{proof}



\section*{Acknowledgements}

I would like to thank Jarek K\k{e}dra and Benjamin Martin for their continued support and for their helpful comments. This work was partly funded by the Leverhulme Trust Research Project Grant ${\text{RPG-2017-159}}$.

\vspace{8mm}
\noindent
BASTIEN KARLHOFER, UNIVERSITY OF ABERDEEN, UK.\\
\noindent 
EMAIL: r01bdk17@abdn.ac.uk

\end{document}